\newcommand{\ld}{\ensuremath{,\ldots,}}
\newcommand{\ssq}{\ensuremath{\subseteq}}
\newcommand{\smin}{\ensuremath{\setminus}}
\newcommand{\N}{\ensuremath{\mathbb{N}}} 
\newcommand{\R}{\ensuremath{\mathbb{R}}}
\newcommand{\Z}{\ensuremath{\mathbb{Z}}}
\newcommand{\C}{\ensuremath{\mathbb{C}}}
\newcommand{\E}{\ensuremath{\mathbb{E}}}
\newcommand{\Leb}{\ensuremath{\mathrm{Leb}}}
\newcommand{\inte}{\ensuremath{\mathrm{int}}}
\newcommand{\Seins}{\ensuremath{\mathbb{S}^{1}}}
\newcommand{\alphlist}{\begin{list}{(\alph{enumi})}{\usecounter{enumi}\setlength{\parsep}{2pt}
      \setlength{\itemsep}{1pt} \setlength{\topsep}{5pt}
      \setlength{\partopsep}{3pt}}}
\newcommand{\arablist}{\begin{list}{(\arabic{enumi})}{\usecounter{enumi}\setlength{\parsep}{2pt}
          \setlength{\itemsep}{1pt} \setlength{\topsep}{5pt}
          \setlength{\partopsep}{3pt}}}
\newcommand{\romanlist}{\begin{list}{(\roman{enumi})}{\usecounter{enumi}\setlength{\parsep}{2pt}
              \setlength{\itemsep}{1pt} \setlength{\topsep}{5pt}
              \setlength{\partopsep}{3pt}}}
 \newcommand{\listend}{\end{list}}
\newcommand{\bulletlist}{\begin{list}{$\bullet$}{\setlength{\parsep}{2pt}
                \setlength{\itemsep}{1pt} \setlength{\topsep}{5pt}
                \setlength{\partopsep}{3pt}\setlength{\leftmargin}{15pt}}}
\newcommand{\foot}{\footnote}
\newcommand{\nfolge}[1]{\ensuremath{(#1)_{n\in\mathbb{N}}}}
\newcommand{\kfolge}[1]{\ensuremath{(#1)_{k\in\mathbb{N}}}}
\newcommand{\jfolge}[1]{\ensuremath{(#1)_{j\in\mathbb{N}}}}
\newcommand{\ncup}{\ensuremath{\bigcup_{n\in\N}}}
\newcommand{\nLim}{\ensuremath{\lim_{n\rightarrow\infty}}}
\newcommand{\jLim}{\ensuremath{\lim_{j\rightarrow\infty}}}
\newcommand{\inergsum}{\ensuremath{\sum_{i=0}^{n-1}}}
\newcommand{\ktel}{\ensuremath{\frac{1}{k}}}
\newcommand{\ntel}{\ensuremath{\frac{1}{n}}}
\newcommand{\Proj}{\ensuremath{\mathbb{P}}}
\newtheoremstyle{tobthm}{3pt}{3pt}{\itshape}{0pt}{\bfseries}{.}{0.5eM}{}
\theoremstyle{tobthm}
\newtheorem{definition}{Definition}[section]
\newtheorem{thm}[definition]{Theorem}
\newtheorem{lem}[definition]{Lemma}
\newtheorem{lemma}[definition]{Lemma}
\newtheorem{cor}[definition]{Corollary}
\newtheorem{prop}[definition]{Proposition}
\newtheoremstyle{tobrem}{3pt}{3pt}{\normalfont}{0pt}{\bfseries}{.}{0.5em}{}
\theoremstyle{tobrem} 
\newtheorem{rem}[definition]{Remark} \newtheorem{example}[definition]{Example}
\newtheorem{problem}[definition]{Problem}
\numberwithin{equation}{section}
\numberwithin{figure}{section}
\title{\Large\textsc{Random minimality and continuity of invariant graphs in
    random dynamical systems}} \author{T.~J\"ager \and G.~Keller}
\newcommand{\cB}{\mathcal{B}}
\newcommand{\cF}{\mathcal{F}}
\newcommand{\cM}{\mathcal{M}}
\newcommand{\cU}{\mathcal{U}}
\newcommand{\cK}{\mathcal{K}}
\newcommand{\cL}{\mathcal{L}}
\newcommand{\cI}{\mathcal{I}}
\newcommand{\cX}{\mathcal{X}}
\newcommand{\Kmax}{K}
\newcommand{\Romanlist}{\begin{list}{(\Roman{enumi})}{\usecounter{enumi}\setlength{\parsep}{2pt}
              \setlength{\itemsep}{1pt} \setlength{\topsep}{5pt}
              \setlength{\partopsep}{3pt}}}
\newcommand{\CC}{{\widehat C}}
\renewcommand{\geq}{\geqslant}
\renewcommand{\leq}{\leqslant}
\begin{document}

\setlength{\abovedisplayskip}{0.8ex}
\setlength{\abovedisplayshortskip}{0.6ex}

\setlength{\belowdisplayskip}{0.8ex}
\setlength{\belowdisplayshortskip}{0.6ex}

\begin{abstract}
  We study dynamical systems forced by a combination of random and deterministic
  noise and provide criteria, in terms of Lyapunov exponents, for the existence
  of random attractors with continuous structure in the fibres.  For this
  purpose, we provide suitable random versions of the semiuniform ergodic
  theorem and also introduce and discuss some basic concepts of random
  topological dynamics.\bigskip

  \noindent {\em 2010 Mathematics Subject Classification:} 37A30,
  37H15, 34D45.
\end{abstract}

\maketitle

\section{Introduction}

Our aim is to introduce a mathematical framework for the investigation
of dynamical systems forced by a combination of both deterministic and
random external factors, and to discuss conceptual issues that arise in
this context. In general, dynamical systems under the influence of
external forcing are modeled, in discrete time, as skew products
\begin{equation}
  \label{eq:skewproduct}
  T : \Omega\times M \to \Omega \times M \quad , 
  \quad T(\omega,x)=(\theta\omega,T_\omega(x)) \ ,
\end{equation}
where the dynamics of the forcing process are described by the {\em
  base transformation} \mbox{$\theta:\Omega\to\Omega$.} Note that for
simplicity we write $\theta\omega$ instead of $\theta(\omega)$.
Typically, depending on the nature of the forcing, $\theta$ is assumed
to be either a continuous map of a compact metric space $\Omega$ ({\em
  deterministic forcing}) or a measure-preserving transformation of a
probability space $(\Omega,\mathcal{F},\Proj)$ ({\em random forcing}).
In a similar way, modeling in continuous time leads to skew product
flows, which again give rise to skew product systems of the form
(\ref{eq:skewproduct}) via their time-one maps.

An {\em invariant graph} of $T$ is the graph of a measurable function
$\varphi:\Omega\to M$ which satisfies
\begin{equation}
  \label{eq:invgraph}
  T_\omega(\varphi(\omega)) \ = \ \varphi(\theta\omega) 
\end{equation}
for all (or $\Proj$-almost all) $\omega\in\Omega$. More generally, invariant
graphs may also be {\em multivalued}, that is, consist of a constant number
$n\geq 2$ of points in each fibre. In the study of forced or non-autonomous
dynamical systems of the above form, invariant graphs play a central role since
they are the natural substitutes of fixed points of autonomous systems. Lyapunov
exponents yield additional information about the stability and attractivity of
invariant graphs. When $M$ is a smooth manifold and the fibre maps
$T_\omega$ are all differentiable, the maximal Lyapunov exponent of $\varphi$
with respect to a $\theta$-invariant probablity measure $\Proj$ is defined as
\begin{equation}
  \label{eq:lyap}
  \lambda_m(\varphi) \ = \ \nLim \ntel \int_\Omega \log 
  \| DT^n_\omega(\varphi(\omega))\| \ d\Proj(\omega) \ .
\end{equation}
Here $T^n_\omega = T_{\theta^{n-1}\omega}\circ \cdots \circ T_\omega$
and $DT^n_\omega(x)$ denotes the derivative of $T^n_\omega$ in $x$.
Note that the limit in \eqref{eq:lyap} exists by Kingmans Ergodic
Theorem.  For the interpretation of the Lyapunov exponents in
deterministically forced systems, the continuity of the invariant
graph is crucial. In particular, continuous invariant graphs with only
negative Lypunov exponents are known to be uniformly attracting and
hence stable under perturbation, which is not true in the
non-continuous case. A context in which the attractivity of invariant
graphs plays a central role is {\em generalised synchronisation}, a
phenomenon that has been widely studied in theoretical physics
\cite{rulkov-et-al1995,pyragas1996,pikovsky/rosenblum/kurths,keller/jafri/ramaswamy2013}
and, more recently, also in mathematics \cite{Homburg2013Synchronization}.
The transition from continuous to non-continuous invariant graphs is
of interest in the context of non-autonomous bifurcation theory and
has equally been studied intensively in different forms, such as {\em
  fractalisation} and {\em torus collision}
\cite{prasad/negi/ramaswamy:2001}.

A criterion ensuring the existence of continuous invariant graphs has
been provided by Sturman and Stark in \cite{sturman/stark:2000}. In
order to state it, we define the {\em maximal Lyapunov exponent} of a
$T$-invariant probability measure $\mu$ as
\begin{equation}
  \label{eq:maxlyap}
  \lambda(\mu,T) \ = \ \nLim \ntel \int_\Omega \log \|DT^n_\omega(x)\| \ d\mu(\omega,x) \ .
\end{equation}
Note that (\ref{eq:lyap}) is a special case of (\ref{eq:maxlyap}),
with $\mu$ given by $\mu(A)=m(\{\omega\in\Omega\mid
(\omega,\varphi(\omega))\in A\}$.
\begin{thm}[\cite{sturman/stark:2000}, Theorem ]
  \label{t.sturmanstark} Suppose $\theta$ is a minimal homeomorphism
  of a compact metric space $\Omega$ and $f$ is a skew product map of
  the form (\ref{eq:skewproduct}) with $M=\R^d$ such that the fibre
  maps $T_\omega$ are differentiable and their derivative depends
  continuously on $(\omega,x)$. Further, assume that $K$ is a compact
  $T$-invariant set and $\lambda(\mu,T)<0$ for all $T$-invariant
  probability measures $\mu$ supported on $K$. Then $K$ is a finite
  union of continuous invariant graphs (possibly multi-valued).
\end{thm}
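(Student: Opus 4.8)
The plan is to convert the hypothesis on Lyapunov exponents into a \emph{uniform} exponential contraction over $K$, to upgrade this to a fibrewise local contraction in a neighbourhood of $K$, and then to read off the graph structure of $K$, using the minimality of $\theta$ to make the number of branches constant. For the first step, set $g_n(\omega,x):=\log\|DT^n_\omega(x)\|$ on $K$. Since $DT^n_\omega(x)=DT_{\theta^{n-1}\omega}(T^{n-1}_\omega x)\cdots DT_\omega(x)$ and each $T^j_\omega x$ depends continuously on $(\omega,x)$, the $g_n$ are continuous, and they form a subadditive cocycle over $F:=T|_K$ because $\|DT^{n+k}_\omega(x)\|\leq\|DT^k_{\theta^n\omega}(T^n_\omega x)\|\,\|DT^n_\omega(x)\|$. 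By hypothesis $\inf_n\ntel\int g_n\,d\mu=\lambda(\mu,T)<0$ for every $F$-invariant probability measure $\mu$, and $\mu\mapsto\inf_n\ntel\int g_n\,d\mu$ is upper semicontinuous on the nonempty compact set of such measures, so its supremum over $\mu$ is attained and strictly negative. The semiuniform subadditive ergodic theorem (\cite{sturman/stark:2000}) then yields $\nLim\ntel\sup_{K}g_n=\sup_\mu\inf_n\ntel\int g_n\,d\mu<0$, so there are $m\in\N$ and $\alpha\in(0,1)$ with $\|DT^m_\omega(x)\|\leq\alpha$ for all $(\omega,x)\in K$.

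Next I would promote this to fibrewise stable balls. By continuity of $(\omega,x)\mapsto DT^m_\omega(x)$ and compactness of $K$ there exist $\delta_0>0$ and $\beta\in(\alpha,1)$ with $\|DT^m_\omega(x)\|\leq\beta$ for every $x$ at distance at most $\delta_0$ from $K_\omega$. Fix $\delta\in(0,\delta_0]$. If $(\omega,x)\in K$ and $|y-x|\leq\delta$, then the segment $[x,y]$ lies in the $\delta_0$-neighbourhood of $K_\omega$, so $|T^m_\omega y-T^m_\omega x|\leq\beta|y-x|\leq\beta\delta<\delta$; since $T^m_\omega x\in K_{\theta^m\omega}$ again, this iterates, and interpolating over the at most $m-1$ remaining steps with the finite Lipschitz constant of the $T_\omega$ on the $\delta_0$-tube gives constants $C\geq1$ and $\gamma\in(0,1)$ with
\[
  |T^n_\omega y-T^n_\omega x|\ \leq\ C\gamma^n\,|y-x|\qquad(n\geq0)
\]
for all $(\omega,x)\in K$ and all $y$ with $|y-x|\leq\delta$; in particular every point $\delta$-close to $K$ is attracted to $K$ exponentially fast.

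For the branch structure, on each fibre $K_\omega$ I would declare $x\sim_\omega y$ iff $|T^n_\omega x-T^n_\omega y|\to0$; by the previous step this is equivalent to $|T^n_\omega x-T^n_\omega y|\leq\delta$ for some $n\geq0$, so $\sim_\omega$ is an equivalence relation whose distinct classes stay $\delta$-separated at all times. Hence each fibre has at most $N=N(\delta,R,d)<\infty$ classes, where $K\ssq\Omega\times B(0,R)$. Since $T(K)=K$ and $T_\omega$ respects $\sim$ in both directions, $T_\omega$ induces a bijection between the classes in $K_\omega$ and those in $K_{\theta\omega}$, so the number $n(\omega)$ of $\sim_\omega$-classes is $\theta$-invariant; it is also upper semicontinuous (subsequential limits of class representatives along $\omega_j\to\omega$ realise at least $n(\omega_j)$ distinct classes in $K_\omega$, using continuity of the maps $T^n$), hence constant, $n\equiv N_0$, by minimality of $\theta$. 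The remaining — and most delicate — point is that each class is a single point. Here I would feed the contraction back through the backward invariance $K_\omega=T^{\ell m}_{\theta^{-\ell m}\omega}(K_{\theta^{-\ell m}\omega})$ together with $\|DT^{\ell m}_{\theta^{-\ell m}\omega}\|\leq\alpha^\ell$ on $K_{\theta^{-\ell m}\omega}$: following the backward orbits of two points lying in one class, either they remain $\delta_0$-close for all times, in which case the iterated contraction forces them to coincide, or the boundedness of $K$ caps their separation and one concludes likewise — equivalently, $K$ is its own pullback attractor and the uniform contraction squeezes its fibres down. Once each $K_\omega$ consists of exactly $N_0$ points, which are $\delta$-separated, the closedness of $K$ makes $\omega\mapsto K_\omega$ continuous in the Hausdorff metric, and sorting the $N_0$ local branches into their $\theta$-orbits exhibits $K$ as a finite union of continuous invariant graphs, multi-valued precisely when $\theta$ permutes the branches within an orbit nontrivially.

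The heart of the argument is the finiteness/continuity assertion at the end: the first two steps only give contraction \emph{along} orbits and \emph{on} $K$, with no transversal control a priori, so ruling out infinite fibres genuinely forces one to use the backward-invariance (pullback-attractor) viewpoint together with minimality; everything else is a fairly soft consequence of compactness, continuity of the derivative cocycle, and the semiuniform subadditive ergodic theorem.
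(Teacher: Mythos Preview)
Your overall strategy --- semiuniform ergodic theorem $\Rightarrow$ uniform contraction on $K$ $\Rightarrow$ contraction on a $\delta_0$-tube $\Rightarrow$ combinatorial finiteness of the fibres $\Rightarrow$ continuity via minimality --- is correct and matches the paper's route to its random generalisation (Proposition~\ref{p.main}, of which Theorem~\ref{t.sturmanstark} is the deterministic special case with trivial $\Omega$). The first two steps are fine, and so is the final passage from ``finitely many uniformly separated points per fibre'' to continuous multi-graphs.

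The gap is in the step you yourself flag as most delicate: showing each $\sim_\omega$-class is a singleton. Your Case~2 does not close. The bound $\|DT^{\ell m}_{\theta^{-\ell m}\omega}\|\leq\alpha^\ell$ \emph{on} $K_{\theta^{-\ell m}\omega}$ gives no Lipschitz control of $T^{\ell m}$ between two points of $K_{\theta^{-\ell m}\omega}$ that are more than $\delta_0$ apart, since the segment joining them may leave the tube where the derivative estimate holds; ``boundedness of $K$ caps their separation'' buys you nothing here, because a derivative bound at the endpoints does not bound the displacement of a long segment. The clean fix --- and what the paper actually does in Proposition~\ref{p.main} --- is to replace the two-point backward-orbit argument by covering numbers: let $N_\epsilon(\omega)$ be the minimal number of $\epsilon$-balls needed to cover $K_\omega$. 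For $\epsilon\leq\delta_0$ each covering ball lies in the tube, so its image under $T^m_\omega$ is contained in a $\beta\epsilon$-ball, yielding $N_{\beta\epsilon}(\theta^m\omega)\leq N_\epsilon(\omega)$; together with $N_\epsilon\leq N_{\beta\epsilon}$ and minimality this forces $N_\epsilon\equiv n$ independent of $\epsilon$, whence $\#K_\omega=n$. In your pullback language: cover $K_{\theta^{-\ell m}\omega}$ by a uniformly bounded number $M$ of $\delta_0$-balls and push forward to get $K_\omega$ covered by $M$ balls of radius $\beta^\ell\delta_0\to0$; the mean-value inequality now applies \emph{ball by ball}, which is precisely what the pairwise argument was missing. (A minor aside: the argument you give for the semicontinuity of the class-count $n(\omega)$ actually establishes \emph{lower} semicontinuity, not upper; fortunately that is what is needed, since a lower semicontinuous $\theta$-invariant integer function on a minimal space is constant.)
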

In fact, the above statement is a slight generalisation of the one in
\cite{sturman/stark:2000} and taken from
\cite{AnagnostopoulouJaeger2012SaddleNodes}, since Sturman and Stark restricted
to skew products over irrational translations on a $d$-dimensional torus. In
this case, further conclusions can be drawn concerning the regularity of the
invariant graphs, which turn out to be as smooth as the system itself, see
\cite{stark:1999}.\smallskip

In the context of random forcing, one can a priori not speak about the continuity
of invariant graphs, due to the lack of a topological structure on the driving
space $\Omega$. However, a first question one may ask is whether under similar
assumptions a random compact set is just a finite union of invariant graphs, and
thus consists of a finite number of points on each fibre. In order to state the
respective analogue to Theorem~\ref{t.sturmanstark}, we need to introduce some
terminology concerning random dynamical systems.  

A {\em random map with base } $(\Omega,\mathcal{F},\Proj,\theta)$, in
the sense of Arnold \cite{Arnold1998RandomDynamicalSystems}, is a skew
product of the form (\ref{eq:skewproduct}) where
$(\Omega,\mathcal{F},\Proj)$ is a probability space,
$\theta:\Omega\to\Omega$ is a bi-measurable and ergodic
measure-preserving bijection and $M$ is a measurable space.\foot{The
  base $(\Omega,\mathcal{F},\Proj,\theta)$ is called a {\em
    measure-preserving dynamical system}.}  When $M$ is metric, we
always assume that the measurable structure is given by the Borel
$\sigma$-algebra or its completion.  Further, it is required that the
fibre maps $T_\omega:M\to M$ are continuous and that for each $x\in M$
the map $\omega\mapsto T(\omega,x)$ is $\cF$-measurable.  If $M$ is a
smooth manifold and all fibre maps $T_\omega$ are $\mathcal{C}^r$, we
call $T$ a random $\mathcal{C}^r$-map. If the fibre maps are all
homeomorphisms, we call $T$ a {\em random homeomorphism}.  The set of all
$T$-invariant probability measures on $\Omega\times M$ is denoted by
by $\cM(T)$, and the set of $\mu\in\cM(T)$ which project to $\Proj$ by
$\cM_\Proj(T)$. Given $K\ssq \Omega\times M$, we let $K(\omega)=\{x\in
M\mid (\omega,x)\in K\}$. We say $K$ is a {\em random compact (closed)
  set} if
\begin{enumerate}[(i)]
\item $K(\omega)=\{x\in M\mid (\omega,x)\in K\}$ is compact (closed) for all
  $\omega\in\Omega$;
\item the functions $\omega\mapsto d(x,K(\omega))$ are measurable for
  all $x\in M$.
\end{enumerate}
We now have 
\begin{thm}
  \label{t.singlepoints} Suppose $T:\Omega\times\R^d\to\Omega\times\R^d$ is a
  random $\mathcal{C}^1$-map with base $(\Omega,\cF,\Proj,\theta)$, the family
  $\left(x\mapsto\log \|DT^k_\omega(x)\|\right)_{\omega\in\Omega}$ is
  equicontinuous for all $k\in\N$ and $K\ssq \Omega\times \R^d$ is a random
  compact set such that $\lambda(\mu,T)<0$ for all $\mu\in\cM_\Proj^K(T)$. Then
  there exists an integer $n$ such that $\# K(\omega)= n$ for $\Proj$-a.e.\
  $\omega\in\Omega$.
\end{thm}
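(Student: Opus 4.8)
The first ingredient I would set up is a \emph{random} version of the semiuniform (sub-additive) ergodic theorem, applied to the sub-additive cocycle $\phi_n(\omega,x)=\log\|DT^n_\omega(x)\|$ over $T$ restricted to $K$. Since, by Kingman, $\lambda(\mu,T)=\lim_n\tfrac1n\int\phi_n\,d\mu=\inf_n\tfrac1n\int\phi_n\,d\mu<0$ for every $\mu\in\cM_\Proj^K(T)$, such a theorem yields a constant $\lambda<0$ and a measurable, $\Proj$-a.e.\ finite and tempered function $b:\Omega\to[1,\infty)$ with $\|DT^n_\omega(x)\|\leq b(\omega)e^{n\lambda}$ for all $n\geq 0$, all $x\in K(\omega)$, and $\Proj$-a.e.\ $\omega$. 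The essential gain over plain Oseledets/Kingman is the \emph{uniformity in $x\in K(\omega)$}; establishing this random semiuniform ergodic theorem is a prerequisite (cf.\ the abstract), but once available the remainder of the argument is largely soft.

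Next I would turn the equicontinuity hypothesis into a genuine contraction. Fix a block length $N$ and let $\delta_N>0$ be an equicontinuity scale so that $|z-z'|\leq\delta_N$ forces $\|DT^N_\omega(z)\|\leq 2\|DT^N_\omega(z')\|$ for \emph{all} $\omega$. Set $\beta(\omega)=2b(\omega)e^{N\lambda}$. Using temperedness of $b$ and the Birkhoff theorem for $\theta^N$, for a suitably large (possibly $\omega$-dependent) $N$ one can choose a measurable, $\Proj$-a.e.\ positive $\eta:\Omega\to(0,\delta_N]$ satisfying the cohomological-type inequality $\eta(\theta^N\omega)\geq\beta(\omega)\,\eta(\omega)$ (e.g.\ $\eta(\omega)=\delta_N\inf_{m\geq0}\prod_{j=0}^{m-1}\beta(\theta^{jN}\omega)^{-1}$, which is positive a.e.\ precisely because the partial sums of $\log\beta$ tend to $-\infty$). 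Then the random closed neighbourhood $U(\omega):=\overline{B(K(\omega),\eta(\omega))}$ is forward invariant for the $N$-step dynamics: by the mean value inequality along segments that stay within $\delta_N$ of $K(\omega)$, $T^N_\omega(U(\omega))\ssq\overline{B(K(\theta^N\omega),\beta(\omega)\eta(\omega))}\ssq U(\theta^N\omega)$.

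Let $\tilde K$ be the maximal invariant random compact subset of $U$, obtained as the decreasing intersection of the images of $U$ under the $N$-step dynamics; it contains $K$, and since $K$ is $T$-invariant one has $K(\omega)\ssq\tilde K(\omega)$. Each $U(\omega)$ has only finitely many connected components, and $T^N_\omega$ sends the closure of each component — being connected and contained in the $\beta(\omega)\eta(\omega)\leq\eta(\theta^N\omega)$-neighbourhood of $K(\theta^N\omega)$ — into a single component of $U(\theta^N\omega)$; this induces a surjection of the (finite) component sets, so the number of components is $\Proj$-a.e.\ constant. It then remains to prove that each component of $\tilde K(\omega)$ is a \emph{single point}, which by the previous paragraphs forces $K(\omega)$ to be finite. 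I expect this to be the main obstacle, and it is exactly where the $\mathcal C^1$-structure (not mere Lipschitz continuity) and the invariance of $K$ must be combined with the quantitative bound of the semiuniform theorem: a non-degenerate component — equivalently an accumulation point $x_*(\omega)\in K(\omega)$ with $x_n(\omega)\to x_*(\omega)$ in $K(\omega)$, or a non-trivial forward-asymptotic class — gives rise to secant directions whose forward evolution is controlled by $DT^n_\omega$ and hence by $b(\omega)e^{n\lambda}$; passing to the past along $K$ (possible since $T_\omega(K(\omega))=K(\theta\omega)$, with a measurable selection of backward orbits needed because the fibre maps need not be injective), the separation $|x_n-x_*|$ would be forced to grow at exponential rate $\approx|\lambda|>0$ while staying bounded by $\diam K(\theta^{-kN}\omega)$, which grows sub-exponentially along $\Proj$-a.e.\ backward orbit by temperedness of the invariant random compact set $K$; comparing the two rates yields a $T$-invariant measure on $K$ with non-negative maximal Lyapunov exponent, contradicting the hypothesis.

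Finally, once $K(\omega)$ is finite for $\Proj$-a.e.\ $\omega$, the function $\omega\mapsto\#K(\omega)$ takes values in $\N$ and, because $T_\omega(K(\omega))=K(\theta\omega)$, satisfies $\#K(\theta\omega)\leq\#K(\omega)$; as $\theta$ preserves $\Proj$ this inequality must be an equality $\Proj$-a.e., and as $\theta$ is ergodic the function is $\Proj$-a.e.\ equal to a constant integer $n$, which is the assertion.
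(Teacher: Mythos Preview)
Your overall architecture is right and matches the paper: invoke a random semiuniform ergodic theorem to obtain fibrewise-uniform contraction on $K$, extend it to a neighbourhood via the equicontinuity hypothesis, and conclude that the fibres $K(\omega)$ are finite with constant cardinality by ergodicity. The last paragraph is fine. There are, however, two genuine gaps in the middle.

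\textbf{The form of the semiuniform estimate.} The bound you use, $\|DT^n_\omega(x)\|\leq b(\omega)e^{n\lambda}$ with $b$ tempered, is Theorem~\ref{t.random_semiuniform} of the paper. But your construction of $\eta(\omega)=\delta_N\inf_{m\geq0}\prod_{j=0}^{m-1}\beta(\theta^{jN}\omega)^{-1}$ requires $\sum_{j=0}^{m-1}\log b(\theta^{jN}\omega)$ to grow sublinearly in $m$, and temperedness of $b$ only gives $\log b(\theta^{jN}\omega)=o(j)$, which permits the sum to grow like $o(m^2)$ and swamp the linear term $mN\lambda$. What the paper needs (and proves separately as Theorem~\ref{t.complement-to-main}) is the \emph{coboundary} form
\[
\Phi_k^{\Kmax}(\omega)\ \leq\ \CC(\theta^k\omega)-\CC(\omega)+k\lambda'
\]
on an ergodic component of $\theta^k$; this telescopes under iteration and is strictly stronger than the tempered form. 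You could also bypass this by setting $\beta(\omega)=2e^{\Phi_N^{\Kmax}(\omega)}$ for a fixed large $N$, since $\Phi_N^{\Kmax}\in L^1(\Proj)$ by hypothesis and has mean $<N\lambda'$.

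\textbf{The ``single point'' step.} This is where your sketch breaks down. Your backward-expansion argument needs that preimages in $K(\theta^{-N}\omega)$ of a close pair $y,y'\in K(\omega)$ are themselves close enough for the mean value inequality (along a straight segment in the contraction region) to force $|z-z'|\geq|y-y'|/\beta$. But since the fibre maps are not assumed injective, a measurable selection of preimages can land in different components of $U(\theta^{-N}\omega)$, and even if $z,z'$ lie in the same component, that component need not be convex, so the segment $[z,z']$ may leave the $\delta_N$-tube where the derivative bound holds. The conclusion that the separation ``would be forced to grow at exponential rate'' is therefore unjustified, and the sketched production of an invariant measure with non-negative exponent is not substantiated.

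The paper avoids these difficulties by a covering-number argument that only uses forward contraction and is local by design. One sets $N_\epsilon(\omega)$ to be the minimal number of balls of radius $\epsilon e^{\CC(\omega)}$ centred in $K(\omega)$ needed to cover $K(\omega)$; because of the coboundary estimate, the image under $T^k_\omega$ of such a ball lies in a ball of radius $e^{-\eta}\epsilon e^{\CC(\theta^k\omega)}$, so
\[
N_{\epsilon}(\theta^k\omega)\ \leq\ N_{e^{-\eta}\epsilon}(\theta^k\omega)\ \leq\ N_\epsilon(\omega).
\]
Ergodicity then makes each $N_\epsilon$ an a.e.\ constant, and the chain above forces these constants to agree for all $\epsilon\in(0,r]$. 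A covering number that is independent of scale means $K(\omega)$ consists of exactly $n$ points, pairwise separated by at least $r e^{\CC(\omega)}$. This replaces both your component-count step and the problematic single-point step in one stroke.
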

This theorem is a special case of Corollary~\ref{c.singlepoints} in
Section~\ref{Applications} below. \smallskip

Now, in the situation where the forcing has a deterministic component
it does make sense to ask for the continuous dependence of
invariant graphs on the deterministic variables, even if the graph as
a whole is random. In order to make this more precise, we consider
systems forced by a combination of random and deterministic noise and
model them as double skew products of the form
\begin{equation}
   \label{e.10intro}
  T :\Omega\times\Xi\times \R^d \to\Omega\times\Xi\times \R^d \quad , 
 \quad T(\omega,\xi,y) = (\theta\omega,g_\omega(\xi),h_{\omega,\xi}(y)) \ ,
\end{equation}
where $(\Omega,\cF,\Proj,\theta)$ is a measure-preserving dynamical system and
$\Xi$ is a compact metric space. The second component $\Xi$ of the product space
corresponds to the deterministic part of the forcing, which we allow to depend
on the random noise as well. Note that fibre maps of $T$ acting on $M=\Xi\times
\R^d$ are given by $T_\omega(\xi,y)=(g_\omega(\xi),h_{\omega,\xi}(y))$.

To our knowledge, such systems have not been studied in the literature
before. However, from the point of view of modeling real-world
processes, such an overlay of random and deterministic forcing seems
very natural. Furthermore, models of this type come up as well in the
bifurcation theory of purely random dynamical systems
\cite{anagnostopoulou/jaeger/keller:2012}, which provided the starting
point of the work presented here.

When $(\Omega\times\Xi,\theta\ltimes g)$ is considered as the basis of
(\ref{e.10intro}), then an invariant graph is a measurable function
$\varphi:\Omega\times\Xi \to \R^d$ which satisfies
\begin{equation}
  \label{eq:invgraph2}
  h_{\omega,\xi}(\varphi(\omega,\xi)) \ = \ \varphi(\theta\omega,g_\omega(\xi)) \ .
\end{equation}
Again $\varphi$ cannot be continuous in $\omega$, but $\xi\mapsto
\varphi(\omega,\xi)$ may be continuous for $\Proj$-a.e.\ $\omega\in\Omega$, in
which case we call $\varphi$ a {\em random continuous invariant graph}. In order
to obtain a random analogue to Theorem~\ref{t.sturmanstark}, we have to assume
the random map 
\begin{equation} \label{e.thetag} \theta\ltimes g : \Omega\times
  \Xi\to\Omega\times\Xi\quad , \quad (\omega,\xi)\mapsto
  (\theta\omega,g_\omega(\xi))
\end{equation}
to be a random minimal homeomorphism, which we define as follows.
\begin{definition}\label{def:minimality}
  The random homeomorphism $\theta\ltimes g$ is \emph{minimal}, if
  each $(\theta\ltimes g)$-forward invariant random closed set $K$
  obeys the following dichotomy:
\begin{description}
\item[either\,] $K(\omega)=\Xi$ for $\Proj$-a.e. $\omega$,
\item[or{\phantom{eith}}\,] $K(\omega)=\emptyset$ for $\Proj$-a.e. $\omega$.
\end{description}
\end{definition}
Finally, by $\cK(\R^d)$ we denote the set of all compact subsets of $\R^d$ and
equip it with the Hausdorff distance to make it a metric space. Using these
notions, we obtain
\begin{thm}\label{t.random-sturman-stark_intro}
  Let $T$ be a random map of the form (\ref{e.10intro}) and $K$ be a
  $T$-invariant random compact set and suppose that for all $k\in\N$
  the family $\left((\xi,y)\mapsto
    \log\|Dh^k_{\omega,\xi}(y)\|\right)_{\omega\in\Omega}$ is
  equicontinuous.  Then, if $\lambda(\mu,T)<0$ for all measures
  $\mu\in\cM_\Proj^K(T)$, and if $\theta\ltimes g$ is a random minimal
  homeomorphism on $\Omega\times\Xi$, there exists a (non-random)
  integer $n>0$ such that, for $\Proj$-a.e. $\omega\in\Omega$,
  \romanlist
\item $\# K(\omega,\xi)=n$ for all $\xi\in\Xi$, 
\item the map $\xi\mapsto K(\omega,\xi)$ from $\Xi$ to $\cK(\R^d)$ is
  continuous.  \listend
\end{thm}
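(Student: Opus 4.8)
\emph{Overview and Step 1 (uniform contraction near $K$).}
The plan is to prove the continuity statement (ii) first, and then to deduce the constant cardinality (i) from it together with Theorem~\ref{t.singlepoints}; we assume $K\neq\emptyset$, as otherwise (i) is vacuous. First I would apply the random (subadditive) semiuniform ergodic theorem provided earlier in the paper to the subadditive cocycle $a_k(\omega,\xi,y)=\log\|Dh^k_{\omega,\xi}(y)\|$ over the random compact set $K$ and the base $\theta\ltimes g$. Since $\lambda(\mu,T)$ is computed from this cocycle and $\lambda(\mu,T)<0$ for every $\mu\in\cM_\Proj^K(T)$ (which is nonempty, by the random Krylov--Bogolyubov theorem), this produces $N\in\N$, $\beta<0$ and a $\theta$-invariant $\Omega_0$ with $\Proj(\Omega_0)=1$ such that $\tfrac1N\log\|Dh^N_{\omega,\xi}(y)\|\leq\beta$ for all $\omega\in\Omega_0$, $\xi\in\Xi$, $y\in K(\omega,\xi)$. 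Since $\theta\ltimes g$ is minimal and $\omega\mapsto\pi_\Xi K(\omega)$ is a $(\theta\ltimes g)$-invariant random closed set, the dichotomy of Definition~\ref{def:minimality} forces $K(\omega,\xi)\neq\emptyset$ for $\Proj$-a.e.\ $\omega$ and \emph{all} $\xi\in\Xi$. Using the equicontinuity of $((\xi,y)\mapsto\log\|Dh^N_{\omega,\xi}(y)\|)_{\omega\in\Omega}$ — which is uniform in $\omega$ — the bound extends, with $\beta/2$ in place of $\beta$, to the fibrewise $2\rho$-neighbourhood $\{d(y,K(\omega,\xi))\leq 2\rho\}$ of $K$, for a uniform $\rho>0$. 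A mean value estimate along segments then shows that $h^N_{\omega,\xi}$ maps $W(\omega,\xi):=\{y:d(y,K(\omega,\xi))<\rho\}$ into $W(\theta^N\omega,g^N_\omega\xi)$, contracting distances there by $e^{N\beta/2}$; in particular $h^N_{\omega,\xi}$ is injective on $W(\omega,\xi)$, and by invariance of $K$ and bijectivity of $g_\omega$ it restricts to a bijection $K(\omega,\xi)\to K(\theta^N\omega,g^N_\omega\xi)$. Composing single steps, $h_{\omega,\xi}$ is injective on $K(\omega,\xi)$, so $(\omega,\xi)\mapsto\#K(\omega,\xi)$ is $(\theta\ltimes g)$-invariant.

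\emph{Step 2 (fibrewise continuity, (ii)).}
Upper semicontinuity of $\xi\mapsto K(\omega,\xi)$ in the Hausdorff metric is immediate from closedness of $K(\omega)$. For the converse I would iterate the identity $K(\omega,\xi)=h^{mN}_{\theta^{-mN}\omega,\zeta_m(\xi)}(K(\theta^{-mN}\omega,\zeta_m(\xi)))$, where $\zeta_m(\xi):=(g^{mN}_{\theta^{-mN}\omega})^{-1}(\xi)$ depends continuously on $\xi$. Let $\delta_\omega(\xi;\eta)$ be the supremum of $d_{\cK(\R^d)}(K(\omega,\xi),K(\omega,\xi'))$ over $\xi'$ with $d(\xi,\xi')<\eta$. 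Splitting the Hausdorff distance between $h^N_{\theta^{-N}\omega,\zeta_1(\xi)}(K(\theta^{-N}\omega,\zeta_1(\xi)))$ and $h^N_{\theta^{-N}\omega,\zeta_1(\xi')}(K(\theta^{-N}\omega,\zeta_1(\xi')))$ into the contribution of the differing base fibre (controlled by the $e^{N\beta/2}$-contraction on $W$, once the relevant distances are $<\rho$) and that of the differing fibre map (controlled by joint continuity of $(\zeta,y)\mapsto h^N_{\theta^{-N}\omega,\zeta}(y)$ on compacta), one obtains a recursion $\delta_\omega(\xi;\eta)\leq e^{N\beta/2}\delta_{\theta^{-N}\omega}(\zeta_1(\xi);c(\eta))+\gamma_{\theta^{-N}\omega}(c(\eta))$ with moduli $c,\gamma_A$ tending to $0$ at $0$. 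Unfolding this $m$ times, the leading term is $\leq e^{mN\beta/2}\diam K(\theta^{-mN}\omega)\to 0$ (temperedness of $\omega\mapsto\diam K(\omega)$ and $\beta<0$), leaving $\delta_\omega(\xi;\eta)\leq\sum_{l\geq 1}e^{(l-1)N\beta/2}\gamma_{\theta^{-lN}\omega}(c_l(\eta))$; since the $l$-th summand is $\leq e^{(l-1)N\beta/2}(\diam K(\theta^{-lN}\omega)+2\rho)$, dominated convergence gives $\delta_\omega(\xi;\eta)\to 0$ as $\eta\to 0$ for $\Proj$-a.e.\ $\omega$, i.e.\ $\xi\mapsto K(\omega,\xi)$ is continuous. (A short bootstrap disposes of the ``distances $<\rho$'' proviso.)

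\emph{Step 3 (constant cardinality, (i)).}
Fix an ergodic $\nu\in\cM_\Proj(\theta\ltimes g)$, with disintegration $\nu=\int\nu_\omega\,d\Proj(\omega)$. Then $(\Omega\times\Xi,\nu,\theta\ltimes g)$ is a measure-preserving dynamical system, $T$ is a random $\mathcal C^1$-map over it with fibre $\R^d$, $(y\mapsto\log\|Dh^k_{\omega,\xi}(y)\|)_{(\omega,\xi)}$ is equicontinuous (a fortiori from the hypothesis), $K$ is a random compact set over this base, and any $T$-invariant measure projecting to $\nu$ lies in $\cM_\Proj^K(T)$ and thus has negative exponent; so Theorem~\ref{t.singlepoints} yields $n_\nu\in\N$ with $\#K(\omega,\xi)=n_\nu$ for $\nu$-a.e.\ $(\omega,\xi)$. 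By minimality, $\omega\mapsto\supp\nu_\omega$ is a $(\theta\ltimes g)$-invariant random closed set, hence $\supp\nu_\omega=\Xi$ for $\Proj$-a.e.\ $\omega$; combined with Step 2 — if $\#K(\omega,\xi_1)>n_\nu$ for some $\xi_1$, lower semicontinuity of the set-valued map makes $\{\xi:\#K(\omega,\xi)>n_\nu\}$ contain a nonempty open set, which cannot be $\nu_\omega$-null since $\supp\nu_\omega=\Xi$ — this forces $\#K(\omega,\xi)\leq n_\nu$ for $\Proj$-a.e.\ $\omega$ and \emph{all} $\xi$, with equality attained; so $n_\nu=\max_\xi\#K(\omega,\xi)$, a $\theta$-invariant quantity, whence $n_\nu$ equals the same constant $n\geq 1$ for every such $\nu$. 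In particular all fibres are finite, so (Step 2 again) $\xi\mapsto\#K(\omega,\xi)$ is lower semicontinuous; hence $\omega\mapsto\{\xi:\#K(\omega,\xi)\leq n-1\}$ is a $(\theta\ltimes g)$-invariant random closed set, so by Definition~\ref{def:minimality} it is $\Proj$-a.s.\ $\Xi$ or $\Proj$-a.s.\ $\emptyset$ — and the former contradicts $\max_\xi\#K(\omega,\xi)=n$. Therefore $\#K(\omega,\xi)=n$ for $\Proj$-a.e.\ $\omega$ and all $\xi$, which is (i).

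\emph{Main obstacle.}
The heart of the argument is Step 2: turning the fibrewise uniform contraction into a quantitative Hausdorff modulus for $\xi\mapsto K(\omega,\xi)$ that is summable along the backward $\theta\ltimes g$-orbit, which requires controlling simultaneously the composed moduli $\gamma_{\theta^{-lN}\omega}$ of the fibre maps and the growth of $\diam K(\theta^{-lN}\omega)$ in a way that is uniform enough in $\omega$; the measurability and integrability bookkeeping (temperedness of $K$, measurability of $(\omega,\xi)\mapsto\#K(\omega,\xi)$, the disintegration identities) is the other place where care is needed. The rest is an orchestration of the random semiuniform ergodic theorem, Theorem~\ref{t.singlepoints}, and the minimality dichotomy.
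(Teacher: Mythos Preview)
There is a genuine gap at the very first step. The random semiuniform ergodic theorem (Theorem~\ref{t.random_semiuniform}) does \emph{not} produce a single $N\in\N$ and $\beta<0$ with $\Phi_N(\omega,\xi,y)\leq N\beta$ for all $\omega$ in a $\theta$-invariant set of full $\Proj$-measure. What it yields is $\Phi_n(\omega,\xi,y)\leq C(\omega)+n\lambda'$ with an adjusted but in general unbounded random variable $C$; equivalently, for $\Proj$-a.e.\ $\omega$ there is an $n(\omega)$ beyond which $\tfrac1n\Phi_n\leq\beta$, and $n(\omega)$ is not bounded. (Already for the additive cocycle $\Phi_1(\omega)=\omega_0$ over a two-sided Bernoulli shift on $\{-1,1\}^\Z$ with negative mean, the event $\{\Phi_N=N\}$ has positive probability for every $N$.) The equicontinuity hypothesis controls variation in $(\xi,y)$ only and cannot supply uniformity in $\omega$. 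Since your Steps~2 and~3 iterate a \emph{fixed} contraction factor $e^{N\beta/2}$ along the backward $\theta$-orbit and work with a \emph{fixed} neighbourhood radius $\rho$, the error propagates: the recursion in Step~2 does not close, and the ``distances $<\rho$'' bootstrap has no uniform threshold to return to.

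This is precisely the obstacle the paper isolates. Instead of a uniform bound it uses the coboundary form of Theorem~\ref{t.complement-to-main}, namely $\Phi_k\leq \CC(\theta^k\omega)-\CC(\omega)+k\lambda'$, and then counts covers of $K(\omega,\xi)$ by balls of \emph{$\omega$-dependent} radius $\epsilon\,e^{\CC(\omega)}$. With this normalisation the covering numbers $N_\epsilon(\omega,\xi)$ become monotone along $(\theta\ltimes g)^k$-orbits and independent of $\epsilon$, which in one stroke gives a uniform finite bound on $\#K(\omega,\xi)$ and a lower separation $c(\omega)=re^{\CC(\omega)}/2$ between distinct points of each fibre---first on a forward-invariant random open set, and then on all of $\Xi$ by minimality; continuity of $\xi\mapsto K(\omega,\xi)$ is then an immediate consequence of (i) together with this separation. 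Your Hausdorff-recursion idea could perhaps be repaired with the same $\omega$-dependent bookkeeping, but the details are substantially more delicate than you indicate. A minor further point: as the paper is organised, Theorem~\ref{t.singlepoints} is derived \emph{from} the present result (via Corollary~\ref{c.singlepoints}), so invoking it in Step~3 is circular unless you reprove it independently first.
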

Note that (ii) implies that $K$ can be represented as a finite union
of random continuous invariant graphs.  We restate and prove this
statement, in slightly more general form, as
Theorem~\ref{t.random-sturman-stark} in Section~\ref{Applications}. As
mentioned in Remark~\ref{r.random-sturman-stark}, a slightly weaker
result still holds when $g$ is only random transitive, a notion which
is also introduced in Section~\ref{Applications}. If $K$ is
connected in each fibre, then the minimality assumption on $g$ can
even be dropped completely (Theorem~\ref{t.connected}).\smallskip

A crucial ingredient in the proof of Theorems~\ref{t.singlepoints} and
\ref{t.random-sturman-stark_intro} is a random version of the
semi-uniform ergodic theorem
\cite{Schreiber1998SET,sturman/stark:2000}.  Such a result was already
proved by Cao \cite{Cao2006RandomSET}, but since we need a non-trivial
modification of his statement (our Theorem~\ref{t.complement-to-main})
we provide this in Section~\ref{RSET} with an independent proof.

From a conceptual point of view, an interesting aspect of these
studies is the fact that notions of random topological dynamics, like
random minimality defined above and random transitivity introduced in
Section~\ref{RandomTopDyn}, come up naturally in our context.  Despite
the importance of these concepts in autonomous dynamics, it seems that
random analogues have not been considered before.  Some basic facts
are collected in Section~\ref{RandomTopDyn}, although these can merely
serve as starting points for the development of a more comprehensive
theory of random topological dynamics.\bigskip

\noindent \textbf{Acknowledgements.} The authors were supported by the
German Research Council (TJ by Emmy-Noether-Project Ja 1721/2-1, GK by
DFG-grant Ke 514/8-1).  Further, this work is part of the activities
of the Scientific Network ``Skew product dynamics and multifractal
analysis'' (DFG-grant Oe 538/3-1).

\section{The random semiuniform ergodic theorem}
\label{RSET}

Recall that a sequence of measurable functions $\Phi_n:M\to\R$ is subadditive
with respect to a measurable transformation $T:M\to M$ if
\begin{equation}
  \label{e.2}
\Phi_{n+m} \ \leq \ \Phi_n\circ T^m + \Phi_m \quad \textrm{for all } n,m\in\N \ .
\end{equation}
Let $\mathcal{M}(T)$ denote the set of $T$-invariant Borel probability
measures on $M$. Given $\mu\in\mathcal{M}(T)$, \eqref{e.2} implies
$\mu(\Phi_{n+m})\leq \mu(\Phi_n) + \mu(\Phi_m)$ provided both sides
are well defined, such that by subadditivity the limit 
\begin{displaymath}
\overline{\Phi}_\mu:=\nLim\ntel\mu(\Phi_n) =\inf_{n\in\N} \ntel\mu(\Phi_n)
\end{displaymath}
exists.  Kingman's Subadditive Ergodic Theorem also ensures the
$\mu$-a.s.\ existence of the pointwise limit
\begin{equation}
  \label{e.4}
  \bar\Phi(x) \ = \ \nLim \ntel \Phi_n(x) \ ,
\end{equation}
provided $\Phi_1$ is integrable. Further
$\mu(\bar\Phi)=\overline\Phi_\mu$, and if $\mu$ is ergodic we also
have $ \bar\Phi(x) = \overline\Phi_\mu $ for $\mu$-a.e.~$x\in M$.

When $M$ is a compact metric space, $\Phi$ is continuous and $T$ is a continuous
and {\em uniquely ergodic} map, the latter meaning that there exists a unique
$T$-invariant Borel probability measure on $M$, then this statement can be
strengthened by replacing pointwise with semi-uniform convergence on $M$ in
(\ref{e.4}). In fact, for many applications, instead of unique ergodicity it
suffices to have an upper bound for the limits $\overline{\Phi}_\mu$ with
respect to all $T$-invariant probability measures $\mu$. In particular, this
applies when uniform contraction estimates are derived from negative Lyapunov
exponents. We have
\begin{thm}[Semiuniform Ergodic Theorem,
  \cite{Schreiber1998SET,sturman/stark:2000}]
  \label{t.SET}
  Let $M$ be a compact metric space, $T:M\to M$ a continuous
  transformation and $\nfolge{\Phi_n}$ a subadditive sequence of
  continuous functions. Suppose that $\lambda\in\R$ satisfies 
  $\overline\Phi_\mu<\lambda$ for all $\mu\in\mathcal{M}(T)$. Then there
  exist $n_0\in\N$ and $\delta>0$ such that
  \[
  \ntel\Phi_n(x) \ \leq \ \lambda-\delta \quad \textrm{for all } n\geq n_0,\ x\in M.
  \]
\end{thm}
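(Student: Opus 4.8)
The plan is to argue by contradiction. Suppose the conclusion fails. Then for every $n_0\in\N$ and every $\delta>0$ there are $n\geq n_0$ and $x\in M$ with $\tfrac1n\Phi_n(x)>\lambda-\delta$; taking $\delta=1/k$ and $n_0=k$ produces sequences $n_k\to\infty$ and $x_k\in M$ such that $\tfrac{1}{n_k}\Phi_{n_k}(x_k)\geq\lambda-\tfrac1k$, hence $\liminf_{k\to\infty}\tfrac1{n_k}\Phi_{n_k}(x_k)\geq\lambda$. The idea is to manufacture from this data a $T$-invariant measure $\mu$ with $\overline\Phi_\mu\geq\lambda$, contradicting the hypothesis $\overline\Phi_\mu<\lambda$ for all $\mu\in\cM(T)$. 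The natural candidate is a weak-$*$ limit of empirical measures: set $\mu_k=\tfrac1{n_k}\sum_{i=0}^{n_k-1}\delta_{T^i x_k}$ and, using compactness of $M$ (hence of the space of Borel probability measures on $M$ in the weak-$*$ topology), pass to a subsequence along which $\mu_k\to\mu$. A standard Krylov--Bogolyubov argument shows $\mu\in\cM(T)$.

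The key step is to transfer the lower bound $\tfrac1{n_k}\Phi_{n_k}(x_k)\geq\lambda-\tfrac1k$ into the bound $\mu(\Phi_m)\geq m\lambda$ for a suitable fixed $m$, which then yields $\overline\Phi_\mu=\inf_m\tfrac1m\mu(\Phi_m)\geq\lambda$. The mechanism is subadditivity: fixing $m\in\N$ and performing the division-with-remainder estimate, one gets for each $k$
\begin{displaymath}
  \Phi_{n_k}(x_k) \ \leq \ \sum_{j=0}^{q-1}\Phi_m\bigl(T^{jm}x_k\bigr) + \Phi_{r}\bigl(T^{qm}x_k\bigr),
\end{displaymath}
where $n_k=qm+r$ with $0\leq r<m$, and the remainder term is $O(1)$ in $k$ because $\Phi_1,\dots,\Phi_m$ are bounded (continuous on compact $M$) and $\Phi_r\leq\sum_{i=0}^{r-1}\Phi_1\circ T^i$. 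Dividing by $n_k$ and regrouping the sum $\sum_{j=0}^{q-1}\Phi_m(T^{jm}x_k)$ into $m$ arithmetic progressions, one sees that $\tfrac1{n_k}\Phi_{n_k}(x_k)$ is, up to $o(1)$, at most the average of $\Phi_m$ against an empirical measure along the orbit of $x_k$. Passing to the limit along the chosen subsequence and using weak-$*$ convergence $\mu_k\to\mu$ together with continuity of $\Phi_m$ gives $\lambda\leq\liminf_k\tfrac1{n_k}\Phi_{n_k}(x_k)\leq\tfrac1m\mu(\Phi_m)$.

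The main obstacle is the bookkeeping in this transfer step: one must be careful that the empirical measures built from the orbit of $x_k$ over blocks of length $m$ (starting at offsets $0,m,2m,\dots$) converge to the same limit $\mu$ as the full empirical measures $\mu_k$ — this follows because shifting the starting index by a bounded amount changes an empirical measure by $O(m/n_k)\to0$ in any reasonable metric — and that the boundary term $\Phi_r(T^{qm}x_k)/n_k$ genuinely vanishes, which needs the uniform bound on $\Phi_1,\dots,\Phi_{m-1}$ supplied by continuity and compactness. Once $\overline\Phi_\mu\geq\lambda$ is established, the contradiction with $\overline\Phi_\mu<\lambda$ is immediate, completing the proof. \myqed
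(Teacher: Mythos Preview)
The paper does not give its own proof of this theorem---it is quoted from \cite{Schreiber1998SET,sturman/stark:2000} as background. Your argument is the standard one and is correct; it also mirrors precisely the strategy the paper employs for the random generalisation (see the proof of Lemma~\ref{l.maximizing_measure}): build empirical measures along (near-)maximising orbit segments, extract a weak-$*$ limit $\mu^*\in\cM(T)$ via Krylov--Bogolyubov, and use subadditivity together with the offset-averaging trick to show $\tfrac1m\,\mu^*(\Phi_m)\geq\lambda$ for every fixed $m$, contradicting the hypothesis.
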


Our aim is to provide a random analogue to this result, which is needed
in the proof of Theorem~\ref{t.random-sturman-stark_intro} in 
Section~\ref{Applications}. As before, we denote the set of all $T$-invariant
probability measures on $\Omega\times M$ by $\cM(T)$, and the set of
$\mu\in\cM(T)$ which project to $\Proj$ by $\cM_\Proj(T)$. We call
$\Phi:\Omega\times M\to\R$ a \emph{random continuous function} if
\begin{enumerate}[(i)]
\item the functions $x\mapsto\Phi(\omega,x)$ are continuous for all
  $\omega\in\Omega$;
\item the functions $\omega\mapsto\Phi(\omega,x)$ are measurable for
  all $x\in M$.
\end{enumerate}
$U\ssq \Omega\times M$ is a {\em random open set} if $U^c$ is random closed.  A
random set $K$ (open or closed) is called {\em forward $T$-invariant} if
$T_\omega(K(\omega))\ssq K(\theta\omega)$ for $\Proj$-almost every
$\omega\in\Omega$.  $K$ is called {\em $T$-invariant} if the inclusion can be
replaced by equality.  Note that in contrast to random $T$-invariance, the
notions of random continuous functions and random open, closed or compact sets do not
depend on the measure $\Proj$.

For any forward $T$-invariant random compact set $K$, we denote the set of
$\mu\in\mathcal{M}_\Proj(T)$ which are supported on $K$ by
$\mathcal{M}^K_\Proj(T)$.  Given a random continuous function $\Phi:\Omega\times
M\to\R$ and a random compact set $K$, we define
\begin{equation}
  \Phi^{\Kmax}(\omega)  :=  \max\{ \Phi(\omega,x) \mid x\in K(\omega) \} \  .
\end{equation}  
Finally, we call a random variable $C:\Omega\to \R$ {\em adjusted} to
$\theta$, if it satisfies $\lim_{|n|\to\infty}\frac{1}{|n|}
C(\theta^n\omega)=0$ for $\Proj$-a.e.\ $\omega$. 
\begin{thm}[Random Semiuniform Ergodic Theorem] \label{t.random_semiuniform} Let
  $T:\Omega\times M\to \Omega\times M$ be a random map with ergodic base
  $(\Omega,\cF,\Proj,\theta)$. Suppose that $K$ is a forward $T$-invariant
  random compact set, and that $\nfolge{\Phi_n}$ is a subadditive sequence of
  random continuous functions with $|\Phi_n|^{\Kmax}\in L^1(\Proj)$ for all
  $n\in\N$. Further, assume that $\lambda\in\R$ satisfies $\overline\Phi_\mu <
  \lambda$ for all $\mu\in\mathcal{M}^K_\Proj(T)$.
  
  Then there exist $\lambda'<\lambda$
  and an adjusted random variable $C:\Omega\to\R$ such that
  \begin{equation} \label{e.main-estimate}
    \Phi_n(\omega,x) \ \leq \ C(\omega)+n\lambda' 
    \quad \textrm{for all $n\in\N$, $\Proj$-a.e.\ $\omega\in\Omega$ and all $x\in K(\omega)$.} 
  \end{equation}   
  In particular, for $\delta\in (0,\lambda-\lambda')$ and $\Proj$-a.e.\
  $\omega\in\Omega$ there exists $n(\omega)\in \N$ such that
  \begin{equation}
    \ntel \Phi_n(\omega,x) \ \leq \ \lambda -\delta \qquad \textrm{for all }  n\geq n(\omega) 
    \textrm{ and } x\in K(\omega).
  \end{equation}
\end{thm}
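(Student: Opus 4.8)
The plan is to mimic the classical proof of the semiuniform ergodic theorem (Theorem~\ref{t.SET}) but replace the compactness/unique-ergodicity argument over $M$ by a measure-theoretic argument over the fibred system, using Kingman's theorem on $\Omega\times M$ together with a weak-$*$ compactness argument in the space of invariant measures projecting to $\Proj$. First I would reduce to the statement \eqref{e.main-estimate}, since the ``in particular'' part follows immediately: once $\Phi_n(\omega,x)\le C(\omega)+n\lambda'$ with $C$ adjusted, for $\Proj$-a.e.\ $\omega$ we have $\frac1n C(\theta^{\text{shifted}})$-type corrections going to $0$; more directly, dividing by $n$ gives $\frac1n\Phi_n(\omega,x)\le \frac1n C(\omega)+\lambda'$, and since $C(\omega)$ is a fixed finite number for $\Proj$-a.e.\ fixed $\omega$, for $n$ large (depending on $\omega$) the right-hand side is below $\lambda-\delta$. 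So the whole content is in producing $\lambda'<\lambda$ and the adjusted $C$.

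The key step is the following quantitative claim: there exist $\lambda'\in(\,\overline{\Phi}_\mu$-sup$,\lambda)$ — more precisely some $\lambda'<\lambda$ with $\sup_{\mu\in\mathcal M^K_\Proj(T)}\overline\Phi_\mu<\lambda'$ — and some $m\in\N$ such that $\int_\Omega (\Phi_m)^K\,d\Proj\le m\lambda'$, i.e.\ the time-$m$ fibrewise maximum already has small average. To see why such an $m$ exists, I would argue by contradiction: if $\frac1m\int_\Omega(\Phi_m)^K\,d\Proj\ge\lambda'$ for all $m$, then using that $(\omega,x)\mapsto\Phi_m$ restricted to $K$ attains its fibrewise max on a measurably-selectable set (a measurable selection theorem for random compact sets gives a measurable $x_m(\omega)\in K(\omega)$ with $\Phi_m(\omega,x_m(\omega))=(\Phi_m)^K(\omega)$), one can push forward the measures $\frac1m\sum_{j=0}^{m-1}\delta_{T^j(\omega,x_m(\omega))}$ averaged over $\Proj$, extract a weak-$*$ limit $\mu_\infty$, check it lies in $\mathcal M^K_\Proj(T)$ (forward invariance of $K$ and compactness of the fibres are used here, plus that $K$ is genuinely $T$-invariant on a full-measure set or can be replaced by its invariant core), and derive $\overline\Phi_{\mu_\infty}\ge\lambda'$, contradicting the hypothesis once $\lambda'$ is chosen close enough to $\lambda$. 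This is the standard Schreiber/Sturman--Stark trick transplanted to the random setting, and is essentially Cao's argument; the subadditive relation $\Phi_{n+m}\le\Phi_n\circ T^m+\Phi_m$ together with $\mu(\Phi_n)$-subadditivity makes $\overline\Phi_\mu=\inf_n\frac1n\mu(\Phi_n)$, so a lower bound on all the finite-time averages transfers to $\overline\Phi_\mu$.

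Once such an $m$ and $\lambda'$ are fixed, I would build the adjusted cocycle-correction $C$. Writing $n=qm+r$ with $0\le r<m$ and iterating subadditivity, $\Phi_n(\omega,x)\le \sum_{j=0}^{q-1}\Phi_m(T^{jm}(\omega,x))+\Phi_r(T^{qm}(\omega,x))$; on $K$ this is $\le\sum_{j=0}^{q-1}(\Phi_m)^K(\theta^{jm}\omega)+(\Phi_r)^K(\theta^{qm}\omega)$. By Birkhoff's ergodic theorem applied to the $L^1(\Proj)$ function $(\Phi_m)^K$ under $\theta^m$ (ergodicity of $\theta^m$ is not automatic, but one works with $\theta$ and the $m$-step Birkhoff averages, or passes to the natural ergodic decomposition of $\theta^m$ whose components still have the same integral bound), the partial sums $\sum_{j=0}^{q-1}(\Phi_m)^K(\theta^{jm}\omega)$ are $qm\cdot\frac1m\int(\Phi_m)^K d\Proj + o(q)\le n\lambda'+o(n)$ for $\Proj$-a.e.\ $\omega$. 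Absorbing the $o(n)$ fluctuation term and the bounded remainder term $(\Phi_r)^K(\theta^{qm}\omega)$ (which is $o(n)$ because $|\Phi_r|^K\in L^1$ is adjusted by Birkhoff, for each of the finitely many $r<m$) into a single random variable $C(\omega):=\sup_{n\in\N}\bigl(\Phi_n(\omega,x)-n\lambda'\bigr)_{x\in K(\omega)}$ — which is the natural definition — one checks $C$ is finite $\Proj$-a.e.\ and adjusted, i.e.\ $\frac1{|n|}C(\theta^n\omega)\to0$; finiteness and the adjustedness are exactly what the Birkhoff estimates above give. I expect the main obstacle to be the bookkeeping around non-ergodicity of $\theta^m$ and making the convergence in the Birkhoff step uniform enough to conclude that $C$ is genuinely adjusted rather than merely $o(n)$ along the subsequence $n\equiv0\ (m)$; handling the two-sided limit $|n|\to\infty$ requires invertibility of $\theta$ and applying Birkhoff to $\theta^{-1}$ as well, plus care that the fibrewise-max functions $(\Phi_n)^K$ are genuinely measurable (this is where a measurable selection / projection theorem for the random compact set $K$ enters). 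A secondary technical point is verifying that the limit measure $\mu_\infty$ in the contradiction step is supported on $K$: since $K$ is only forward invariant and closed fibrewise, one should either assume, or arrange by replacing $K$ with $\bigcap_{k\ge0}T^k K$, that $\mu_\infty(K)=1$; the empirical measures live on orbits starting in $K$, so forward invariance suffices to keep all the mass in $K$, and fibrewise closedness passes to the weak-$*$ limit.
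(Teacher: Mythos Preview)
Your overall strategy matches the paper's: the weak-$*$ compactness argument you sketch is exactly the paper's Lemma~\ref{l.maximizing_measure}, which shows $\overline\Phi^{\Kmax}=\sup_{\mu\in\mathcal M^K_\Proj(T)}\overline\Phi_\mu$ is attained, and your definition $C(\omega)=\sup_{n\ge0}\bigl(\Phi_n^{\Kmax}(\omega)-n\lambda'\bigr)$ is the one the paper uses. However, the paper streamlines the argument in two places where you take a detour. First, rather than fix an $m$ with $\frac1m\int(\Phi_m)^K\,d\Proj<\lambda'$ and run Birkhoff on $\theta^m$ (forcing you to confront the non-ergodicity of $\theta^m$ and remainder terms), the paper observes that $(\Phi_n^{\Kmax})_{n\in\N}$ is itself a subadditive sequence over the ergodic base $(\Omega,\theta,\Proj)$ and applies Kingman directly: $\frac1n\Phi_n^{\Kmax}(\omega)\to\overline\Phi^{\Kmax}<\lambda'$ $\Proj$-a.s., so $C(\omega)<\infty$ a.s.\ with no further bookkeeping. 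Second---and this is where your sketch is genuinely incomplete---adjustedness of $C$ does not follow from the Birkhoff estimates you invoke; finiteness of $C$ a.s.\ does not by itself give $\frac1{|n|}C(\theta^n\omega)\to0$, and $C$ need not be integrable. The paper handles this with a short separate lemma (Lemma~\ref{l.adjusted}): if $C\circ\theta-C$ has an integrable minorant then $C$ is adjusted. One then checks from the definition of $C$ that $C(\theta\omega)-C(\omega)\ge\min\{0,\lambda'-\Phi_1^{\Kmax}(\omega)\}$, which is integrable. This coboundary-minorant trick is the missing idea in your adjustedness step.
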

\begin{rem} 
\begin{enumerate}[(a)]
\item This theorem generalises \cite[Theorem 1]{Schreiber1998SET} and
  \cite[Theorem 1.9]{sturman/stark:2000} from the deterministic to the
  random setting. It is slighty more general than the main result in
  \cite{Cao2006RandomSET}, mostly because of our estimate
  (\ref{e.main-estimate}), but also because our
  Lemma~\ref{lemma:measurability} allows to avoid the assumption that
  the probability space $(\Omega,\cF,\Proj)$ is complete. As estimate
  (\ref{e.main-estimate}) and its more subtle variant given in
  Theorem~\ref{t.complement-to-main} below are crucial for various
  applications, we give a streamlined full proof of both results.
    
     It should be mentioned that \cite{sturman/stark:2000}
    also contains a random version of these results (Theorem 1.19),
    but only in the sense that the statement is made with respect to a
    fixed reference measure on the base, while the space $\Omega$ is
    still assumed to be compact and both $T$ and $\Phi_n$ are required
    to be continuous. This theorem is therefore not applicable to
    general randomly forced systems, and the proof avoids the
    difficulties coming from the lack of a topological structure on
    $\Omega$ that we have to deal with here.
\item Note that $C$ is adjusted in the above sense if and only if
  $e^C$ is tempered \cite[Definition 4.1.1]{Arnold1998RandomDynamicalSystems}. Hence, by
  \cite[Proposition 4.3.3]{Arnold1998RandomDynamicalSystems}, given $\epsilon>0$ there is a
  random variable $D_\epsilon$ such that
\begin{equation*}
  -\epsilon|n|+D_\epsilon(\omega)\leqslant 
  D_\epsilon(\theta^n\omega)\leqslant\epsilon|n|+D_\epsilon(\omega)\text{ for all $n\in\Z$}
\end{equation*}
and $C\leqslant D_\epsilon$, so that 
\begin{equation} \nonumber
  \Phi_n(\omega,x) \ \leq \ D_\epsilon(\omega)+n\lambda' \quad \textrm{for } \Proj\textrm{-a.e. } 
  \omega\in\Omega \textrm{ and all } x\in K(\omega). 
\end{equation}
\item Theorem~\ref{t.random_semiuniform} can easily be extended to
  subadditive sequences of random continuous functions
  $\Phi_n:\Omega\times M\to\R\cup\{-\infty\}$, see
  Remark~\ref{r.minus_infinity}. This is particularly important in the
  light of applications to Lyapunov exponents involving non-invertible
  linear cocycles or differential matrices.
\end{enumerate}
\end{rem}
\medskip

In order to prove Theorem~\ref{t.random_semiuniform}, we start by providing some
more basic facts on random dynamical systems.  The following lemma is easily
derived from results in \cite{CV1977}, but we include the proof for the
convenience of the reader. Note that unlike the related statements \cite[Lemma
III.39]{CV1977} and \cite[Theorem 8.2.11]{AF1990}, it does not require
completeness of the probability space $(\Omega,\cF,\Proj)$.
\begin{lemma}\label{lemma:measurability}
$\Phi^{\Kmax}:\Omega\to\R$ is measurable, and there is a measurable map
$h:\Omega\to M$ such that $h(\omega)\in K(\omega)$ and
$\Phi^{\Kmax}(\omega)=\Phi(\omega,h(\omega))$ for all $\omega\in\Omega$.
\end{lemma}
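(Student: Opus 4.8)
The plan is to reduce everything to a measurable selection theorem for the set-valued map $\omega\mapsto K(\omega)$. The first step is to observe that $K$ is a random compact set, so by definition $\omega\mapsto d(x,K(\omega))$ is measurable for every fixed $x\in M$. Combined with the separability of $M$ (which we may assume, since $M$ is metric and — in the applications — even a manifold or a product of a compact metric space with $\R^d$), this makes $\omega\mapsto K(\omega)$ a measurable closed-valued multifunction in the sense of Castaing–Valadette \cite{CV1977}: there exists a countable family of measurable selections $(u_i)_{i\in\N}$, $u_i(\omega)\in K(\omega)$, with $\{u_i(\omega):i\in\N\}$ dense in $K(\omega)$ for every $\omega$. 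This is the one external input I would take from \cite{CV1977}, and it is exactly here that one avoids any completeness hypothesis on $(\Omega,\cF,\Proj)$: the statement is about the Borel/measurable structure, not about $\Proj$-null sets, so \cite[Lemma III.39]{CV1977} can be invoked in its ``un-completed'' form rather than the version quoted in \cite{AF1990}.

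The second step handles $\Phi^{\Kmax}$. Since $x\mapsto\Phi(\omega,x)$ is continuous and $\{u_i(\omega)\}$ is dense in the compact set $K(\omega)$, we have
\[
  \Phi^{\Kmax}(\omega) \ = \ \max_{x\in K(\omega)}\Phi(\omega,x) \ = \ \sup_{i\in\N}\Phi(\omega,u_i(\omega)) \ .
\]
Each $\omega\mapsto\Phi(\omega,u_i(\omega))$ is measurable, being the composition of the jointly-measurable-in-the-right-sense function $\Phi$ (measurable in $\omega$, continuous in $x$, hence a Carathéodory function, hence jointly measurable) with the measurable map $\omega\mapsto(\omega,u_i(\omega))$. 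A countable supremum of measurable functions is measurable, so $\Phi^{\Kmax}$ is measurable.

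The third step produces the selection $h$ realising the maximum. The standard trick is to select a pointwise-minimal index: put
\[
  N(\omega) \ := \ \min\{\, i\in\N \mid \Phi(\omega,u_i(\omega)) > \Phi^{\Kmax}(\omega) - \textstyle\frac1{?}\,\}
\]
— more carefully, one does an approximation-and-refinement argument. Since $\Phi^{\Kmax}(\omega)$ is attained (compactness of $K(\omega)$ plus continuity of $\Phi(\omega,\cdot)$), a cleaner route is: the set $G=\{(\omega,x): x\in K(\omega),\ \Phi(\omega,x)=\Phi^{\Kmax}(\omega)\}$ is again a measurable closed-valued (in fact compact-valued, nonempty) multifunction — its graph is measurable because it is the intersection of the measurable graph of $K$ with the zero set of the measurable function $(\omega,x)\mapsto\Phi^{\Kmax}(\omega)-\Phi(\omega,x)$ — so it admits a measurable selection $h$ by the same Castaing–Valadette selection theorem, and this $h$ does the job by construction. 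Alternatively, and perhaps more transparently for the reader, one takes $h(\omega)=u_{N(\omega)}(\omega)$ where $N(\omega)=\min\{i:\Phi(\omega,u_i(\omega))\ge \Phi(\omega,u_j(\omega))\ \forall j\le i\text{ and the value is }\ge\Phi^{\Kmax}(\omega)-2^{-i}\}$ combined with a limiting argument; but the measurable-graph argument is shorter.

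The main obstacle, and the only genuinely delicate point, is the first step: making sure the measurable-selection machinery of \cite{CV1977} applies without assuming $(\Omega,\cF,\Proj)$ complete. Most textbook statements (e.g. \cite[Theorem 8.2.11]{AF1990}) bake in completeness, which is precisely what Remark~\ref{r.random-sturman-stark} wants to dispense with. I would therefore isolate the bare measurability claim — that a closed-valued multifunction whose distance functions $\omega\mapsto d(x,K(\omega))$ are $\cF$-measurable admits a Castaing representation into $\cF$-measurable selections — cite the appropriate lemma in \cite{CV1977} where this is proved on an abstract measurable space, and then the remaining two steps are routine: countable sup for measurability of $\Phi^{\Kmax}$, and a second application of the selection theorem to the (measurable-graph, compact-nonempty-valued) argmax multifunction to get $h$.
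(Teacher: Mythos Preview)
Your first two steps coincide with the paper's argument: both invoke the Castaing representation \cite[Theorem~III.30]{CV1977} to get dense measurable selections $u_i$ (the paper calls them $a_k$), and both obtain measurability of $\Phi^{\Kmax}$ as a countable supremum $\sup_i\Phi(\omega,u_i(\omega))$.

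The divergence is in your third step, and your preferred Route~A has a gap at precisely the point you flagged as delicate. You argue that the argmax multifunction $G$ has a measurable graph (intersection of the graph of $K$ with a measurable level set), and then invoke the selection theorem. But for closed-valued multifunctions into a Polish space, the implication ``measurable graph $\Rightarrow$ measurable multifunction (hence Castaing selection)'' is exactly the step that requires $(\Omega,\cF,\Proj)$ to be complete (or $\cF$ to be closed under the Souslin operation); see e.g.\ \cite[Theorem~III.30 vs.\ the graph criteria in III.§2]{CV1977}. So Route~A reintroduces the very hypothesis you are trying to avoid.

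The paper sidesteps this by staying entirely on the ``random compact set'' side and never passing through graph measurability. It defines measurable index functions
\[
  k_\ell(\omega)=\min\{k\in\N \mid \Phi(\omega,a_k(\omega))>\Phi^{\Kmax}(\omega)-\ell^{-1}\},
\]
so that each $\omega\mapsto a_{k_\ell(\omega)}(\omega)$ is a measurable point, and then takes the cluster set
\[
  H(\omega)=\bigcap_{j\in\N}\overline{\bigcup_{\ell\geq j}\{a_{k_\ell(\omega)}(\omega)\}}\ \subseteq\ \{x\in K(\omega):\Phi(\omega,x)=\Phi^{\Kmax}(\omega)\}.
\]
Since random compact sets are stable under countable intersections and closures of countable unions \cite[Proposition~III.4]{CV1977}, $H$ is a random compact set outright, and the selection theorem \cite[Theorem~III.9]{CV1977} applies without any completeness assumption. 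This is essentially a clean implementation of your Route~B ``limiting argument''; that route is correct, and you should promote it over Route~A.
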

\begin{proof}
  For the random compact set $K$ there is a sequence $\kfolge{a_k}$ of
  measurable maps $a_k:\Omega\to M$ such that
  $K(\omega)=\operatorname{closure}\{a_k(\omega)\mid k\in\N\}$ for all
  $\omega\in\Omega$ \cite[Theorem III.30]{CV1977}, see also \cite[Proposition
  1.6.3]{Arnold1998RandomDynamicalSystems}. Therefore, observing the random continuity of $\Phi$,
\begin{equation*}
\Phi^{\Kmax}(\omega)  =  \max\{ \Phi(\omega,x) \mid x\in K(\omega) \}
= \sup\{\Phi(\omega,a_k(\omega))\mid k\in\N\}
\end{equation*}
is measurable. 
For $\ell\in\N$ let $k_\ell(\omega)=\min\left\{k\in\N \mid
  \Phi(\omega,a_k(\omega))>\Phi^{\Kmax}(\omega)-\ell^{-1}\right\}$. The
$k_\ell:\Omega\to\N$ are measurable.
 Therefore, 
\begin{equation*}
  H(\omega)=\bigcap_{j\in\N}\overline{\bigcup_{\ell\geqslant j}\{a_{k_\ell(\omega)}(\omega)\}}
  \subseteq
  \left\{x\in K(\omega)\mid \Phi(\omega,x)=\Phi^{\Kmax}(\omega)\right\}
\end{equation*} 
is a random compact set \cite[Proposition III.4]{CV1977}, and there is a
measurable selection $h:\Omega\to M$ such that $h(\omega)\in H(\omega)\subseteq
K(\omega)$ for all $\omega\in\Omega$ \cite[Theorem III.9]{CV1977}.
\end{proof}

The proof of the next lemma is straightforward.
\begin{lemma}
  If the sequence $\nfolge{\Phi_n}$ of random continuous functions is
  subadditive and $K$ is a forward invariant random compact set, then 
  the sequence $\nfolge{\Phi_n^{\Kmax}}$ is subadditive.
\end{lemma}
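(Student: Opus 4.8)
The plan is to read off subadditivity of $\nfolge{\Phi_n^{\Kmax}}$ — which, since these are functions on $\Omega$, means the inequality $\Phi_{n+m}^{\Kmax}\leq \Phi_n^{\Kmax}\circ\theta^m+\Phi_m^{\Kmax}$ with respect to the base map $\theta$ — directly from the subadditivity inequality (\ref{e.2}) for $\nfolge{\Phi_n}$, using forward invariance of $K$ only to control where orbit points land. First I would fix $m\in\N$ and pass to a $\theta$-invariant full-measure set $\Omega_0\subseteq\Omega$ on which $T^j_\omega(K(\omega))\subseteq K(\theta^j\omega)$ holds for every $j\geq 0$; this is obtained by intersecting the (a.e.) set where forward invariance $T_\omega(K(\omega))\subseteq K(\theta\omega)$ holds with all its $\theta$-translates, and iterating, so that e.g.\ $T^2_\omega(K(\omega))=T_{\theta\omega}(T_\omega(K(\omega)))\subseteq T_{\theta\omega}(K(\theta\omega))\subseteq K(\theta^2\omega)$ for $\omega\in\Omega_0$. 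Then for $\omega\in\Omega_0$ and any $x\in K(\omega)$, writing $T^m(\omega,x)=(\theta^m\omega,T^m_\omega x)$ with $T^m_\omega x\in K(\theta^m\omega)$, inequality (\ref{e.2}) evaluated at $(\omega,x)$ gives
\[
\Phi_{n+m}(\omega,x)\ \leq\ \Phi_n(\theta^m\omega,T^m_\omega x)+\Phi_m(\omega,x)\ \leq\ \Phi_n^{\Kmax}(\theta^m\omega)+\Phi_m^{\Kmax}(\omega),
\]
the second step because $\Phi_n(\theta^m\omega,\cdot)\leq\Phi_n^{\Kmax}(\theta^m\omega)$ on $K(\theta^m\omega)$ and $\Phi_m(\omega,\cdot)\leq\Phi_m^{\Kmax}(\omega)$ on $K(\omega)$ by the very definition of the fibrewise maxima.

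Taking the maximum over $x\in K(\omega)$ — attained, with the resulting function measurable, by Lemma~\ref{lemma:measurability} — yields $\Phi_{n+m}^{\Kmax}(\omega)\leq\Phi_n^{\Kmax}(\theta^m\omega)+\Phi_m^{\Kmax}(\omega)$ for all $\omega\in\Omega_0$, i.e.\ for $\Proj$-a.e.\ $\omega\in\Omega$, which is exactly subadditivity of $\nfolge{\Phi_n^{\Kmax}}$ with respect to $\theta$. There is essentially no real obstacle; the only point deserving a word of care is that forward invariance of $K$ is only an almost-everywhere statement, so one must restrict to the full-measure $\theta$-invariant set $\Omega_0$ before iterating $T_\omega$ along a trajectory, after which the argument is the one-line chain of inequalities displayed above. (The same reasoning applies verbatim if the $\Phi_n$ are allowed to take the value $-\infty$, as in Remark~\ref{r.minus_infinity}.)
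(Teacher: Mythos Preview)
Your proof is correct and is exactly the straightforward argument the paper has in mind; indeed the paper does not spell out a proof at all, only stating that it ``is straightforward.'' Your care in restricting to a $\theta$-invariant full-measure set before iterating the forward-invariance inclusion is the one point worth making explicit, and you handle it properly.
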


\label{sec:main}

From now on, we use the hypothesis of
Theorem~\ref{t.random_semiuniform} as standing assumptions for the
remainder of this section.  Subadditivity of \nfolge{\Phi^{\Kmax}_n}
allows to define
\[
\overline\Phi^{\Kmax} \ = \ \inf_{n\in\N} 
\ntel \Proj\left(\Phi^{\Kmax}_n\right) = \ \nLim \ntel \Proj\left(\Phi^{\Kmax}_n\right) \ .
\]
Obviously $\overline\Phi_\mu\leq \overline\Phi^{\Kmax}$ for each
$\mu\in\mathcal{M}^K_\Proj(f)$.  The proofs of the following two
results are inspired by the proofs of Lemmas 3 and 4 in
{\cite{Cao2008a}}.  Note that any measure $\mu\in\mathcal{M}_\Proj(T)$
can be disintegrated into a family of probability measures
$(\mu_\omega)_{\omega\in\Omega}$ on the fibres, in the sense that
$\int_{\Omega\times M} \Phi \ d\mu =\int_\Omega \int_M \Phi(\omega,x)
\ d\mu_\omega(x)\,d\Proj(\omega)$ for all measurable functions
$\Phi:\Omega\times M\to \R$ \cite[Proposition 1.4.3]{Arnold1998RandomDynamicalSystems}.

\begin{lem} \label{l.maximizing_measure} We have
  $\overline\Phi^{\Kmax}=\sup\{\overline\Phi_\mu \mid
  \mu\in\mathcal{M}^K_\Proj(T)\}$, and the supremum is attained by some
  $\mu^*\in\mathcal{M}^K_\Proj(T)$.
\end{lem}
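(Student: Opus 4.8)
The plan is to prove Lemma~\ref{l.maximizing_measure} in two stages: first establish the identity $\overline\Phi^{\Kmax}=\sup\{\overline\Phi_\mu\mid\mu\in\mathcal{M}^K_\Proj(T)\}$ together with the existence of a maximizer, by the standard ergodic-optimization technique of constructing empirical measures from orbit segments and extracting a weak-$*$ limit. Concretely, I would start from Lemma~\ref{lemma:measurability}, which gives a measurable selection $h:\Omega\to M$ with $h(\omega)\in K(\omega)$ and $\Phi_n^{\Kmax}(\omega)=\Phi_n(\omega,h_n(\omega))$ for the $n$-th function (applied to each $\Phi_n$). For a fixed large $N$, I would push the fibre point $h_N(\omega)$ forward under $T$ and average the resulting Dirac masses along the $\theta$-orbit: define, for $\Proj$-a.e.\ $\omega$, the measure $\mu_{N,\omega}=\frac1N\sum_{k=0}^{N-1}\delta_{T^k(\omega,h_N(\omega))}$ — except that one must be careful, since $\Omega$ carries no topology, so instead I would integrate over $\omega$ first, setting $\mu_N:=\int_\Omega \frac1N\sum_{k=0}^{N-1}\delta_{T^k(\omega,x_\omega)}\,d\Proj(\omega)$ for a suitably chosen measurable family $x_\omega\in K(\omega)$, so that $\mu_N$ automatically projects to $\Proj$ (using $\theta$-invariance of $\Proj$) and is carried by the forward-invariant set $K$.

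The key point of the averaging is to arrange that $\int \Phi_N\,d\mu_N$ is close to $\Proj(\Phi_N^{\Kmax})$. Here I would exploit the disintegration $\mu=\int \mu_\omega\,d\Proj$ recalled just before the lemma, and subadditivity of $\nfolge{\Phi_n}$: for a measure $\mu\in\mathcal{M}_\Proj^K(T)$ one has $\overline\Phi_\mu=\inf_n \frac1n\mu(\Phi_n)$, and $\mu(\Phi_n)\leq \Proj(\Phi_n^{\Kmax})$ fibrewise, which already gives the easy inequality $\overline\Phi^{\Kmax}\geq\sup_\mu\overline\Phi_\mu$. For the reverse inequality I would build, for each $N$, a $T$-invariant measure $\mu^{(N)}\in\mathcal{M}_\Proj^K(T)$ with $\frac1N\mu^{(N)}(\Phi_N)\geq \frac1N\Proj(\Phi_N^{\Kmax})-o(1)$: take the point $h_N(\omega)$ realizing the max of $\Phi_N(\omega,\cdot)$ on $K(\omega)$, form the orbit average $\frac1N\sum_{k=0}^{N-1}(T^k)_*\big(\Proj\ltimes\delta_{h_N}\big)$, check it is $T$-invariant up to $O(1/N)$ in the appropriate sense (or pass to a genuine limit), lies in $\mathcal{M}_\Proj^K(T)$ because $K$ is forward invariant and $\Proj$ is $\theta$-invariant, and compute $\mu^{(N)}(\Phi_1)$ via a telescoping/subadditivity estimate $\sum_{k=0}^{N-1}\Phi_1\circ T^k\geq \Phi_N - (\text{error})$ so that $\mu^{(N)}(\Phi_1)\gtrsim \frac1N\Proj(\Phi_N^{\Kmax})$; letting $N\to\infty$ and using the integrability hypothesis $|\Phi_n|^{\Kmax}\in L^1(\Proj)$ to control the error terms yields measures with $\overline\Phi_{\mu^{(N)}}$ approaching $\overline\Phi^{\Kmax}$. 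Finally, to get an attained supremum rather than just a supremum, I would take a sequence $\mu^{(N)}$ with $\overline\Phi_{\mu^{(N)}}\to\overline\Phi^{\Kmax}$ and extract a limit $\mu^*$; since the fibre space $M$ need not be compact one works on the random compact set $K$, whose fibres are compact, so the relevant space of fibred probability measures over $\Proj$ is compact in the narrow/Young-measure topology, and $\mu\mapsto\overline\Phi_\mu=\inf_n\frac1n\mu(\Phi_n)$ is an infimum of (affine) upper semicontinuous functions, hence upper semicontinuous, giving $\overline\Phi_{\mu^*}\geq\overline\Phi^{\Kmax}$, and equality with the easy direction.

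The main obstacle I anticipate is precisely the absence of any topological (let alone compact) structure on the base $\Omega$: the usual Krylov–Bogolyubov / empirical-measure argument for attaining the supremum needs compactness to extract weak-$*$ limits, and here it must be replaced by a fibrewise argument where only $M$ (or really $K(\omega)\subset M$) carries compactness while the $\omega$-direction is handled measure-theoretically via disintegrations and the reference measure $\Proj$. Making "weak-$*$ convergence of $\mu^{(N)}$" precise therefore requires working with the space of probability measures on $\Omega\times M$ projecting to $\Proj$ and carried by $K$, equipped with the coarsest topology making $\mu\mapsto\mu(\Phi)$ continuous for all bounded random continuous $\Phi$ (equivalently, viewing them as Young measures $\omega\mapsto\mu_\omega\in\mathcal{P}(K(\omega))$), and checking that this space is sequentially compact and that $T_*$ acts continuously on it — together with verifying that the limit measure is genuinely $T$-invariant (not merely forward-invariant), which follows because $\frac1N\sum_{k=0}^{N-1}(T^k)_*\nu - \frac1N\sum_{k=0}^{N-1}(T^{k+1})_*\nu \to 0$ and $T_*$ is continuous. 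Once these functional-analytic preliminaries over the fibred setting are in place, the subadditive bookkeeping that links $\mu(\Phi_1)$ to $\frac1N\Proj(\Phi_N^{\Kmax})$ and the upper semicontinuity of $\mu\mapsto\overline\Phi_\mu$ are routine, following the scheme of Lemmas~3 and~4 of \cite{Cao2008a} adapted to the random context.
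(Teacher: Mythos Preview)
Your plan is essentially the paper's proof: take measurable maximising selections $h_n$ from Lemma~\ref{lemma:measurability}, form the orbit-averaged measures $\mu_n$ with fibre measures $\mu_{n,\omega}=\frac1n\sum_{i=0}^{n-1}\delta_{T^i_{\theta^{-i}\omega}(h_n(\theta^{-i}\omega))}$, extract a random-weak limit $\mu^*\in\mathcal{M}_\Proj^K(T)$, and use subadditivity bookkeeping to show $\overline\Phi_{\mu^*}\geq\overline\Phi^{\Kmax}$. You have correctly identified the main obstacle (no topology on $\Omega$) and its cure (fibrewise compactness of $K(\omega)$ and the Young-measure/random-weak topology on measures projecting to $\Proj$); the paper packages this step as a direct citation of the random Krylov--Bogolyubov theorem from \cite{Crauel2002} or \cite[Theorem~1.6.13]{Arnold1998RandomDynamicalSystems}, so you need not set it up from scratch.

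There is one slip in your outline. From the telescoping estimate you obtain $\mu^{(N)}(\Phi_1)\geq\frac1N\Proj(\Phi_N^{\Kmax})$, but this does \emph{not} give ``$\overline\Phi_{\mu^{(N)}}\to\overline\Phi^{\Kmax}$'': since $\overline\Phi_\mu=\inf_n\frac1n\mu(\Phi_n)\leq\mu(\Phi_1)$, the inequality points the wrong way, and the $\mu^{(N)}$ are not $T$-invariant anyway. Consequently your upper-semicontinuity step, which needs a lower bound on $\limsup_N\overline\Phi_{\mu^{(N)}}$, does not close as written. The paper (and the natural fix of your argument) avoids this by never looking at $\overline\Phi_{\mu^{(N)}}$: after passing to the limit $\mu^*$, one fixes $k$, passes to a subsequence $n_l=s_jk+t$, and uses the subadditive decomposition of $\Phi_{s_jk+t}$ into blocks of length $k$ along the orbit of $h_{s_jk+t}$ to show directly that $\frac1k\mu^*(\Phi_k)\geq\overline\Phi^{\Kmax}$. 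The passage $\mu_{n_l}(\Phi_k)\to\mu^*(\Phi_k)$ for the unbounded $\Phi_k$ uses the uniform-integrability observation~\eqref{e.integral}, which is exactly where the hypothesis $|\Phi_k|^{\Kmax}\in L^1(\Proj)$ enters. Taking $\inf_k$ then yields $\overline\Phi_{\mu^*}\geq\overline\Phi^{\Kmax}$, so the equality and the attainment of the supremum come out simultaneously rather than in two stages.
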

\begin{proof}
  Let $h_n$ be measurable selections such that
  $\Phi_n^{\Kmax}(\omega)=\Phi_n(\omega,h_n(\omega))$, see
  Lemma~\ref{lemma:measurability}. Define measures $\mu_n \in
  \mathcal{M}^K_{\Proj}(T)$ via their fibre measures
\[
\mu_{n,\omega}(\theta) \ = \ \frac{1}{n} \sum_{i = 0}^{n
  - 1} \delta_{T^i_{\theta^{-i}\omega}(h_n (\theta^{-i}\omega))} \ ,
\]
where $\delta_x$ denotes the Dirac measure in a point $x\in M$.  As
$h_n (\omega) \in  K (\omega)$ for each
$\omega\in\Omega$, all measures $\mu_n$ are supported by the forward
invariant random compact set $K$. Hence, by the random
Krylov-Bogulyubov Theorem ({\cite{Crauel2002}} or {\cite[Theorem
  1.6.13]{Arnold1998RandomDynamicalSystems}}), there is a subsequence
$(\mu_{n_l})_{l\in\N}$ converging (random-)weakly to some $\mu^{\ast}
\in \mathcal{M}^{K}_{\Proj} (f)$.
  
Now fix $k \in \N$. Then, for some $t\in\{0\ld k-1\}$ the sequence
$(n_l)_{l\in\N}$ contains a subsequence of the form
$(s_jk+t)_{j\in\N}$. Note that for any sequence $\jfolge{x_j}$ of
measurable functions $x_j:\Omega\to M$ with $x_j(\omega)\in K(\omega)$
for all $\omega\in\Omega,\ j\in\N$, and any sequence of integers
$\jfolge{N_j}$ with $N_j\nearrow\infty$, the fact that
$|\Phi_k|^{\Kmax}\in L^1(\Proj)$ easily implies
  \begin{equation}
    \label{e.integral}
    \jLim \frac{1}{N_j} \int_\Omega \Phi_k(\omega,x_j(\omega)) \ d\Proj(\omega) \ = \ 0 \ .
  \end{equation}
  Using this observation, we obtain
\begin{eqnarray*}
  \mu^*\left(\ktel \Phi_k\right) & = & \jLim \ktel \int_\Omega \Phi_k \ d\mu_{s_jk+t} \\
  & = & \jLim\frac{1}{k(s_jk+t)} \sum_{i=0}^{s_jk+t-1} \int_\Omega \Phi_k 
  \circ T^i(\theta^{-i}\omega,h_{s_jk+t}(\theta^{-i}\omega)) \ d\Proj(\omega)\\
  & = & \jLim\frac{1}{k(s_jk+t)} \sum_{i=0}^{s_jk+t-1} \int_\Omega \Phi_k 
  \circ T^i(\omega,h_{s_jk+t}(\omega)) \ d\Proj(\omega) \\
  & \stackrel{(\ref{e.integral})}{=} & \jLim\frac{1}{k(s_jk+t)}
  \sum_{i=0}^{k-1}\sum_{l=0}^{s_j-2} \int_\Omega \Phi_k 
  \circ T^{kl+i}(\omega,h_{s_jk+t}(\omega)) \ d\Proj(\omega) \\
  & \geq & \jLim\frac{1}{k(s_jk+t)} \sum_{i=0}^{k-1} \int_\Omega \Phi_{(s_j-1)k }\circ 
  T^i(\omega,h_{s_jk+t}(\omega)) \ d\Proj(\omega) \\
  & \geq & \jLim\frac{1}{k(s_jk+t)} \sum_{i=0}^{k-1} \left(\int_\Omega 
    \Phi_{s_jk+t}(\omega,h_{s_jk+t}(\omega))\ d\Proj(\omega) \right. \\
  & &  - \ \left.\int_\Omega \Phi_i(\omega,h_{s_jk+t}(\omega)) + 
    \Phi_{k-i+t}\circ T^{(s_j-1)k+i}(\omega,h_{s_jk+t}(\omega))\ d\Proj(\omega)\right)
  \\
  & \stackrel{(\ref{e.integral})}{=} & \jLim\frac{1}{s_jk+t} \int_\Omega 
  \Phi_{s_jk+t}(\omega,h_{s_jk+t}(\omega))\ d\Proj(\omega)  \\
  & = & \jLim \frac{1}{s_jk+t} \int_\Omega \Phi^{\Kmax}_{s_jk+t}(\omega) \ d\Proj(\omega) \quad
  = \quad \overline\Phi^{\Kmax} \ .
\end{eqnarray*}
Since this holds for all $k\in\N$, we have
$\overline\Phi_{\mu^*}=\inf_{k\in\N} \mu^*\left(\ktel \Phi_k\right) \geq
\overline\Phi^{\Kmax}$.
\end{proof}

For the proof of Theorem~\ref{t.random_semiuniform}, we will further
need the following useful criterion for the adjustedness of random
variables.
\begin{lem} \label{l.adjusted} Suppose $C:\Omega\to \R$ is measurable
  and $C\circ \theta-C$ has a $\Proj$-integrable minorant. Then $C$ is
  adjusted to $\theta$.
\end{lem}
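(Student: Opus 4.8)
The plan is to telescope. Set $g := C\circ\theta - C$; since $C$ is finite everywhere, $g$ is a finite measurable function, and the hypothesis provides a minorant $\psi\in L^1(\Proj)$ with $g\ge\psi$, so the negative part $g^-$ is dominated by $\psi^-\in L^1(\Proj)$ and $\int g\,d\Proj$ is well defined in $(-\infty,+\infty]$. Summing the cocycle identity gives $\sum_{i=0}^{n-1}g\circ\theta^i = C\circ\theta^n - C$, and since $\frac1n C\to 0$ trivially, the forward half of adjustedness reduces to showing $\frac1n\sum_{i=0}^{n-1}g\circ\theta^i\to 0$ $\Proj$-a.e.

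First I would establish an a.e.\ lower bound. The truncations $g_K := \min(g,K)$ lie in $L^1(\Proj)$ for every $K\ge 0$ (positive part bounded by $K$, negative part by $g^-$) and increase to $g$, so Birkhoff's ergodic theorem together with ergodicity of $\theta$ gives $\frac1n\sum_{i<n}g_K\circ\theta^i\to\int g_K\,d\Proj$ a.e.; since $g\ge g_K$, letting $K\to\infty$ and using monotone convergence yields $\liminf_n\frac1n C(\theta^n\omega)\ge\int g\,d\Proj$ for $\Proj$-a.e.\ $\omega$. The complementary fact is that $\frac1n C\circ\theta^n\to 0$ in $\Proj$-probability: as $\theta^n$ preserves $\Proj$ and $C$ is finite a.e., $\Proj(|C\circ\theta^n|>\eps n)=\Proj(|C|>\eps n)\to 0$ for every $\eps>0$. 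This is incompatible with the lower bound unless $\int g\,d\Proj\le 0$, and combined with $g^-\in L^1$ this forces $g\in L^1(\Proj)$. Now Birkhoff applies to $g$ itself, so $\frac1n C\circ\theta^n\to\int g\,d\Proj$ a.e., and comparing once more with convergence in probability to $0$ gives $\int g\,d\Proj = 0$, hence $\frac1n C\circ\theta^n\to 0$ a.e.

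For the backward half, $\tilde g:=g\circ\theta^{-1}=C-C\circ\theta^{-1}$ now lies in $L^1(\Proj)$ with $\int\tilde g\,d\Proj=\int g\,d\Proj=0$, and $\theta^{-1}$ is again an ergodic measure-preserving bijection; telescoping $\sum_{j<n}\tilde g\circ\theta^{-j}=C-C\circ\theta^{-n}$ and applying Birkhoff to $(\theta^{-1},\tilde g)$ gives $\frac1n C\circ\theta^{-n}\to 0$ a.e. Together the two halves give $\lim_{|n|\to\infty}\frac1{|n|}C(\theta^n\omega)=0$ a.e., i.e.\ $C$ is adjusted to $\theta$.

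The only subtle point --- the expected obstacle --- is that $C$ is not assumed integrable, so one cannot simply argue $\int(C\circ\theta-C)\,d\Proj=0$; the detour through the truncated lower bound plus convergence in probability (which uses only measure preservation and finiteness of $C$) is exactly what replaces that naive step, and it is the place where one must remember that $\int g\,d\Proj$ could a priori equal $+\infty$.
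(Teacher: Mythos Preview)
Your proof is correct and follows the same telescoping-plus-Birkhoff strategy as the paper. The only difference is that where the paper invokes an external reference (\cite[Lemma 4.1.13]{Keller1997b}) to conclude that $C\circ\theta-C\in L^1(\Proj)$ with integral zero, you supply a self-contained argument via truncation and convergence in probability---which is essentially the content of that cited lemma.
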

\begin{proof} Since $C\circ \theta-C$ has an integrable minorant, \cite[Lemma
4.1.13]{Keller1997b} implies that $C\circ \theta -C \in L^1(m)$ and
$\int_\Omega C\circ \theta-C \ dm=0$. Hence, we obtain from the
Birkhoff Ergodic Theorem that 
\begin{eqnarray*}
  \nLim \ntel C(\theta^n\omega) & = & \nLim \ntel(C(\theta^n\omega)-C(\omega))
  \ = \  \nLim \ntel\inergsum C(\theta^{i+1}\omega)-C(\theta^i\omega) \\
  & = & \nLim \ntel\inergsum (C\circ \theta-C)\circ \theta^i(\omega) \ = \ 0 \
\end{eqnarray*}
for $m$-a.e.\ $\omega\in\Omega$. Since $C\circ\theta^{-1}-C \ = \
-(C\circ\theta-C)\circ\theta^{-1}\in L^1(\Proj)$ as well, the limit for
$n\to-\infty$ can be treated in the same way.  
\end{proof}

\begin{proof}[\bf Proof of Theorem~\ref{t.random_semiuniform}] Suppose
$\lambda>\overline\Phi_\mu$ for all $\mu\in\mathcal{M}^K_\Proj(T)$.  Then
$\lambda>\overline\Phi^{\Kmax}$ by Lemma~\ref{l.maximizing_measure},
and we can choose $\lambda'\in (\overline\Phi^{\Kmax},\lambda)$. We
have $\nLim \ntel \Phi^{\Kmax}_n(\omega) = \overline\Phi^{\Kmax}$ $\Proj$-a.s.\ by
Kingman's Subadditive Ergodic Theorem. If we let $\Phi_0^{\Kmax}=0$,
then the random variable $C$ defined by
\[
C(\omega) \ = \ \sup_{n\geq 0}( -\lambda' n + \Phi^{\Kmax}_n(\omega))
\]
is non-negative and $\Proj$-a.s.\ finite. The fact that $C$ satisfies
(\ref{e.main-estimate}) is obvious from its definition.  Further, if
$C(\omega)=0$ then $C(\theta\omega)-C(\omega)\geq 0$. Otherwise, since 
\begin{eqnarray*}
  - \lambda' n + \Phi_n^{\Kmax} (\omega) &\leq &\left(-\lambda'(n-1)+
    \Phi_{n - 1}^{\Kmax} (\theta\omega)\right) + \left( - \lambda' + \Phi_1^{\Kmax}
    (\omega) \right) \\ &\leq & C (\theta\omega) - \lambda' + \Phi_1^{\Kmax}
  (\omega), 
\end{eqnarray*}
for all $n\geqslant 1$, we have that 
\[ C (\omega) \ \leq \ C(\theta\omega) - \lambda' + \Phi_1^{\Kmax}(\omega) \ .
\]
Combining both estimates yields
\[ C (\theta\omega) - C(\omega) \ \geq \ \min \{0, \lambda' -
\Phi_1^{\Kmax}(\omega)\}. \] Hence $C \circ \theta - C$ has
an integrable minorant, and thus $C$ is adjusted to $\theta$ by
Lemma~\ref{l.adjusted}.
\end{proof}

For the proof of Theorem~\ref{t.random-sturman-stark_intro}, the
following variation of Theorem~\ref{t.random_semiuniform} will be
crucial.
\begin{thm}\label{t.complement-to-main}
  In the situation of Theorem~\ref{t.random_semiuniform}, there exist
  $\lambda'<\lambda$ and $k_0\in\N\setminus\{0\}$ such that for all
  $k\geqslant k_0$ there are an adjusted random variable
  $\CC_k:\Omega\to[0,\infty)$ and an ergodic component\foot{An ergodic
    component of $\theta^k$ is a $\theta^k$-invariant set $\Omega_k$
    of positive measure such that $\theta^k_{|\Omega_k}$ is ergodic.}
  $\Omega_k$ of $\theta^k$ with $\Proj(\Omega_k)\geqslant1/k$ such
  that
\begin{equation}\label{eq:complement-to-main}
\Phi_k(\omega,x)
\leqslant
\CC_k(\theta^k\omega)-\CC_k(\omega)+k\lambda'
\quad\text{for $\Proj$-a.e. $\omega\in\Omega_k$ and all $x\in K(\omega)$.}
\end{equation}
The random variables $\CC_k$ can also be chosen to take values in
$(-\infty,0]$. Furthermore
\begin{compactenum}[a)]
\item if $(\Phi_n)_{n\in\N}$ is additive,\foot{That is, (\ref{e.2}) holds with equality.} then $k_0=1$ so that
  (\ref{eq:complement-to-main}) holds for $k=1$ and $\Proj$-a.e.
  $\omega\in\Omega$;
\item if $\theta$ is totally ergodic,\foot{That is, $\theta^k$ is
    ergodic for all $k\in\N$.} then (\ref{eq:complement-to-main})
  holds for $\Proj$-a.e. $\omega\in\Omega$.
\end{compactenum}
\end{thm}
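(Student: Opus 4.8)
The plan is to reduce the claim to a scalar Birkhoff problem on the base and then to run (a variant of) Theorem~\ref{t.random_semiuniform} on a well-chosen ergodic component of $\theta^k$. First I would fix the constants. By Lemma~\ref{l.maximizing_measure}, $\overline\Phi^{\Kmax}=\sup\{\overline\Phi_\mu\mid\mu\in\cM^K_\Proj(T)\}<\lambda$, so I pick $\lambda'\in(\overline\Phi^{\Kmax},\lambda)$. Since $(\Phi_n^{\Kmax})_{n\in\N}$ is subadditive with respect to $\theta$ and lies in $L^1(\Proj)$, Fekete's lemma gives $\tfrac1k\Proj(\Phi_k^{\Kmax})\to\overline\Phi^{\Kmax}<\lambda'$, hence there is $k_0\in\N\smin\{0\}$ with $\tfrac1k\Proj(\Phi_k^{\Kmax})<\lambda'$ for all $k\geq k_0$. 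From now on fix such a $k$ and write $\psi:=\Phi_k^{\Kmax}\in L^1(\Proj)$; note that $\psi$ depends on $\omega$ only and that $\Phi_k(\omega,x)\leq\psi(\omega)$ for every $x\in K(\omega)$.

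Second, and this is the step I expect to be the main obstacle, I would select $\Omega_k$. Since $\theta$ is an ergodic bijection commuting with $\theta^k$, it permutes the ergodic components of $\theta^k$; as $\theta^k$ fixes each of them, ergodicity of $\theta$ forces this family to be finite, say $\Omega_k^{(0)},\ldots,\Omega_k^{(N-1)}$, cyclically permuted by $\theta$, whence $N\mid k$ and $\Proj(\Omega_k^{(j)})=1/N\geq1/k$ for all $j$. Because the $N$ components have equal mass and $\sum_j\Proj(\psi\mathbf 1_{\Omega_k^{(j)}})=\Proj(\psi)<k\lambda'$, the component $\Omega_k$ minimizing $\Proj(\psi\mathbf 1_{\Omega_k^{(j)}})$ satisfies $\Proj(\psi\mathbf 1_{\Omega_k})/\Proj(\Omega_k)\leq\Proj(\psi)<k\lambda'$. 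This averaging is precisely why one cannot stay on all of $\Omega$: when $\theta^k$ is not ergodic the conditional mean $\Proj(\psi\mid\mathcal I_{\theta^k})$ may exceed $k\lambda'$ on some component, and summing \eqref{eq:complement-to-main} along a $\theta^k$-orbit together with Birkhoff's theorem shows the estimate would be violated there.

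Third, I would produce $\CC_k$ by a coboundary construction over the induced system $(\Omega_k,\cF|_{\Omega_k},\Proj(\cdot)/\Proj(\Omega_k),\theta^k|_{\Omega_k})$, which is ergodic by choice of $\Omega_k$. On it, $T^k$ maps $\Omega_k\times M$ into itself, $K\cap(\Omega_k\times M)$ is forward $T^k$-invariant, and the additive sequence $\Psi_n:=\sum_{i=0}^{n-1}\psi\circ\theta^{ki}$ consists of random continuous functions of $\omega$ alone with $|\Psi_1|^{\Kmax}=|\psi|\in L^1$; as $\psi$ does not depend on the fibre, $\overline\Psi_\mu=\Proj(\psi\mathbf 1_{\Omega_k})/\Proj(\Omega_k)<k\lambda'$ for every admissible $\mu$. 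Thus Theorem~\ref{t.random_semiuniform} applies with $k\lambda'$ in the rôle of $\lambda$; but instead of quoting it I would re-use the explicit formula from its proof, now over the backward orbit: with $\mu'\in\bigl(\Proj(\psi\mathbf 1_{\Omega_k})/\Proj(\Omega_k),\,k\lambda'\bigr)$ put $\CC_k(\omega):=\sup_{m\geq0}\sum_{i=1}^m\bigl(\psi(\theta^{-ki}\omega)-\mu'\bigr)$. Birkhoff's theorem for $(\theta^k|_{\Omega_k})^{-1}$ gives that these sums tend to $-\infty$, so $\CC_k$ is $\Proj$-a.s.\ finite and $\geq0$ on $\Omega_k$; one computes $\CC_k\circ\theta^k=\max\{0,\CC_k+\psi-\mu'\}$, whence $\psi(\omega)\leq\CC_k(\theta^k\omega)-\CC_k(\omega)+\mu'<\CC_k(\theta^k\omega)-\CC_k(\omega)+k\lambda'$, which with $\Phi_k(\omega,x)\leq\psi(\omega)$ is \eqref{eq:complement-to-main}. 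Moreover $\CC_k\circ\theta^k-\CC_k=\max\{-\CC_k,\psi-\mu'\}$ has the integrable minorant $\psi-\mu'$, so Lemma~\ref{l.adjusted}, applied to the ergodic map $\theta^k|_{\Omega_k}$, makes $\CC_k$ adjusted; one then extends $\CC_k$ to $\Omega$, the estimate and the relevant part of adjustedness concerning only $\Omega_k$. Replacing the backward by the forward orbit, i.e.\ taking $\CC_k(\omega):=-\sup_{m\geq0}\sum_{i=0}^{m-1}(\psi(\theta^{ki}\omega)-\mu')$, yields the variant with $\CC_k$ valued in $(-\infty,0]$.

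Finally, the two addenda come out of the same construction: if $\theta$ is totally ergodic then $\theta^k$ is ergodic for every $k$, so $N=1$, $\Omega_k=\Omega$, and \eqref{eq:complement-to-main} holds $\Proj$-a.e.\ on $\Omega$; if $(\Phi_n)_{n\in\N}$ is additive, the construction can be carried out already at level $k=1$ (where $\theta^1=\theta$ is ergodic and $\Omega_1=\Omega$), and additivity then propagates the level-$1$ inequality to all $k$ by telescoping, so $k_0=1$. The delicate points in executing the plan are the ergodic-component bookkeeping for $\theta^k$ — finiteness of the decomposition, the mass bound $\Proj(\Omega_k)\geq1/k$, and that $\theta^k|_{\Omega_k}$ is genuinely ergodic — and verifying that the resulting $\CC_k$ is adjusted in the required sense.
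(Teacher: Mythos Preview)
Your plan is essentially identical to the paper's proof: both reduce to the scalar function $\psi=\Phi_k^{\Kmax}$, pick an ergodic component $\Omega_k$ of $\theta^k$ on which the conditional mean of $\psi$ is below $k\lambda'$, and define $\CC_k$ as a supremum over backward (resp.\ forward, for the non-positive version) Birkhoff sums. Your intermediate constant $\mu'$ is a harmless variation; the paper simply uses $k\lambda'$ directly in the definition of $\CC_k$, which works because the conditional mean is strictly smaller.

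The one point where your sketch falls short of the paper is adjustedness: applying Lemma~\ref{l.adjusted} to $\theta^k|_{\Omega_k}$ only yields $\tfrac{1}{|n|}\CC_k(\theta^{kn}\omega)\to 0$, whereas the theorem requires $\CC_k$ to be adjusted to $\theta$ itself, i.e.\ $\tfrac{1}{|n|}\CC_k(\theta^{n}\omega)\to 0$ along \emph{all} integers $n$. The paper closes this gap by showing $\CC_k\circ\theta^k-\CC_k\in L^1(\Proj)$ with zero mean, and then applying Birkhoff's theorem to this coboundary separately along each residue class $\theta^{\ell}\omega$, $\ell=0,\dots,k-1$, to get the limit for arbitrary $n$. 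You flag this as a ``delicate point'' but should actually carry it out; your phrase ``the relevant part of adjustedness concerning only $\Omega_k$'' is not enough, since adjustedness to $\theta$ genuinely involves iterates $\theta^n\omega$ that leave $\Omega_k$.
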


\begin{proof}
  As $\overline\Phi^{\Kmax}<\lambda$ by
  Lemma~\ref{l.maximizing_measure}, there is $k_0\in\N$ such that
  $\E_\Proj[\Phi_k^{\Kmax}]=\int_\Omega \Phi_k^{\Kmax} \ d\Proj <k\lambda'$
  for some $\lambda'<\lambda$ and all $k\geqslant k_0$. Fix any such
  $k$.  Then
\begin{equation}
  \nLim\frac{1}{n}\sum_{j=1}^{n}\Phi_k^{\Kmax} (\theta^{- jk} \omega)
  \ = \ \E_\Proj\left[\Phi_k^{\Kmax}\ |\ \cI_k\right](\omega)\quad\text{for $\Proj$-a.e. $\omega$,}
\end{equation}
where $\E_\Proj[\ . \ |\cI_k]$ denotes the conditional expectation
w.r.t. the $\sigma$-algebra $\cI_k\subseteq\cF$ of all
$\theta^k$-invariant sets. Since $\theta$ is ergodic, all sets of
positive measure in $\cI_k$ have measure at least $1/k$. As
$\int_\Omega\E_\Proj\left[\Phi_k^{\Kmax}\ |\ \cI_k\right]\,d\Proj
=\E_\Proj\left[\Phi_k^{\Kmax}\right]<k\lambda'$, this means that there
is an ergodic component of $\theta^k$ such that
$\nLim\frac{1}{n}\sum_{j=1}^{n}\Phi_k^{\Kmax} (\theta^{- jk} \omega)
<k\lambda'$ for $\Proj$-a.e. $\omega\in\Omega_k$. Hence,
\begin{equation}
		0\ \leqslant\
  \CC_k(\omega) \ = \ \sup_{n \geqslant 0} \left( - \lambda' nk + \sum_{j = 1}^n
    \Phi_k^{\Kmax} (\theta^{- jk} \omega) \right)\ <
    \ \infty \quad\text{for $\Proj$-a.e. $\omega\in\Omega_k$.}
    \label{eq:Domega}
\end{equation}
Let $\omega\in\Omega_k$.  We prove (\ref{eq:complement-to-main}):
  \begin{eqnarray*}    
    \CC_k(\theta^k \omega) & = & \sup_{n \geqslant 0} \left( - \lambda' nk +
      \sum_{j = 0}^{n - 1} \Phi_k^{\Kmax} (\theta^{- jk} \omega) \right)\\
    & \geqslant & \sup_{n \geqslant 1} \left( - \lambda' nk +
      \sum_{j = 0}^{n - 1} \Phi_k^{\Kmax} (\theta^{- jk} \omega) \right)\\
    & = & 
    \sup_{n \geqslant 1} \left( - \lambda' (n - 1) k 
      + \sum_{j = 1}^{n - 1} \Phi_k^{\Kmax} (\theta^{- jk} \omega) \right) - \lambda' k
    + \Phi_k^{\Kmax} (\omega)\\
    & = & 
    \CC_k (\omega) - \lambda' k + \Phi_k^{\Kmax} (\omega) \ .
  \end{eqnarray*}
  This also implies that $\CC_k \circ \theta^k - \CC_k$ has the
  integrable minorant $-\lambda' k+\Phi_k^{\Kmax}$ so that, in view of
  {\cite[Lemma 4.1.13]{Keller1997b}}, $\CC_k \circ \theta^k - \CC_k
  \in L^1_{\Proj}$ and $\int \CC_k \circ \theta^k - \CC _kd\Proj= 0$.
  Thus, Lemma~\ref{l.adjusted} implies that $\CC_k$ is adapted to
  $\theta^k$. In order to show that it is also adapted to $\theta$,
  let $\ell\in\{0,\dots,k-1\}$. Then Birkhoff's Ergodic Theorem
  implies that
  \begin{eqnarray*}
    0
    &\leqslant&
    \nLim\frac{1}{nk+\ell}	\CC_k(\theta^{nk+\ell}\omega)
    \ = \
    \nLim\frac{1}{n	}\sum_{j=0}^{n-1}\frac{1}{k}
  \left(\CC_k\circ\theta^k-\CC_k\right)(\theta^{jk}(\theta^\ell\omega))\\
    &=&
    \frac{1}{k}\E_\Proj\left[\CC_k\circ\theta^k-\CC_k\ | \ \cI_k\right](\theta^\ell\omega)
    \quad\text{for $\Proj$-a.e. $\omega$.}
  \end{eqnarray*}
  As $\int\E_\Proj\left[\CC_k\circ\theta^k-\CC_k\ | \
    \cI_k\right]\,d\Proj =\int\CC_k\circ\theta^k-\CC_k\,d\Proj=0$, it
  follows that the limit is actually equal to zero for $\Proj$-a.e.
  $\omega$. Hence $\CC_k$ is adapted to $\theta$.  The case $n
  \rightarrow - \infty$ is similar.

  
  Observe that the $\CC_k$ are all non-negative. We finally show how to modify
  the above construction to obtain non-positive $\CC_k$.  To this end let
\begin{equation}
  \CC_k(\omega) \ = \ -\sup_{n \geqslant 0} \left( -\lambda' nk + \sum_{j = 0}^{n-1}
    \Phi_k^{\Kmax} (\theta^{jk} \omega) \right) \ .\label{eq:Domega2}
\end{equation}  
As above one shows that there is an ergodic component $\Omega_k$ of $\theta^k$
such that $-\infty<\CC_k(\omega)\leqslant 0$ for
$\Proj$-a.e. $\omega\in\Omega_k$.  We prove (\ref{eq:complement-to-main}):
  \begin{eqnarray*}
    \CC_k(\theta^k \omega) & = & \inf_{n \geqslant 0} \left( \lambda' nk -
      \sum_{j = 1}^{n } \Phi_k^{\Kmax} (\theta^{jk} \omega) \right)\\
    & = & 
    \inf_{n \geqslant 0} \left( \lambda' (n + 1) k 
      - \sum_{j = 0}^{(n+1) - 1} \Phi_k^{\Kmax} (\theta^{jk} \omega) \right) - \lambda' k
    + \Phi_k^{\Kmax} (\omega)\\
    & \geqslant & 
    \CC_k (\omega) - \lambda' k + \Phi_k^{\Kmax} (\omega) \ .
  \end{eqnarray*}
  Again, $\CC_k \circ \theta^k - \CC_k$ has the integrable minorant
  $-\lambda' k+\Phi_k^{\Kmax}$, and as before one proves that $\CC_k$
  is adjusted to $\theta$.
\end{proof}

\begin{rem}
  \label{r.minus_infinity} As mentioned, all the above results can
  also be applied to subadditive sequences $\nfolge{\Phi_n}$ of random
  continuous functions taking values in $\R\cup\{-\infty\}$ (equipped
  with the obvious topology). In order to see this, suppose
  $\overline\Phi_\mu<\lambda$ for all ergodic invariant measures
  $\mu\in\cM^K_{\Proj}(T)$. Then Kingman's Subadditive Ergodic Theorem,
  which still applies since it does not assume continuity of the
  $\Phi_n$, implies that for all ergodic $\mu\in\cM^K_{\Proj}(T)$ we
  have
\[
       \limsup_{n\to\infty} \ntel\Phi_n \ < \ \lambda \text{\; $\mu$-a.e.}
  \]
  Let $\lambda'< \lambda$ and
  $\Phi'_n(\omega,x)=\max\{n\lambda',\Phi_n(\omega,x)\}$. Then
  $\nfolge{\Phi_n'}$ is subadditive and
  \[
       \limsup_{n\to\infty} \ntel\Phi_n' \ < \ \lambda \text{\; $\mu$-a.e. } 
  \]
  and hence $\overline\Phi'_\mu<\lambda$ for all ergodic
  $\mu\in\cM^K_{\Proj}(T)$. Thus, we can apply the above results to
  the subadditive sequence $\nfolge{\Phi_n'}$ of continuous real-valued
  functions. Since $\Phi_n'\geq \Phi_n$, all estimates then carry
  over to the original sequence.
\end{rem}

\section{Continuity of random invariant graphs} \label{Applications} 

We now consider random dynamical systems with a double skew product
structure
\begin{equation}
  \label{e.10}
  T :\Omega\times M \to\Omega\times M \quad , 
  \quad T(\omega,\xi,y) = (\theta\omega,g_\omega(\xi),h_{\omega,\xi}(y)) \ ,
\end{equation}
where $M=\Xi\times\R^d$ as above and $\Xi$ is again a compact metric
space. We assume that the maps $g_\omega$ are homeomorphisms, the maps
$(\xi,y)\mapsto h_{\omega,\xi}(y)$ are continuous and differentiable
in $y$ and $Dh_{\omega,\xi}(y)$ is continuous in $(\xi,y)$ for all
$\omega\in\Omega$. As mentioned, the action of the $g_\omega$ on the
second component $\Xi$ corresponds to the deterministic part of the
forcing, such that the combined forcing process is the {\em random
  homeomorphism}
\begin{equation}
  \theta\ltimes g : \Omega\times \Xi \to \Omega\times\Xi \quad , 
  \quad \theta\ltimes g(\omega,\xi) = (\theta\omega,g_\omega(\xi)) \ . 
\end{equation}
In this way, given any $\theta\ltimes g$-invariant probability measure
$m$ we can view $T=(\theta\ltimes g)\ltimes h$ as a random map over
the base $(\Omega\times\Xi,\cF\times\cX,m,\theta\ltimes g)$, where
$\cX$ denotes the Borel $\sigma$-algebra on $\Xi$. An alternative
point of view is to write $T=\theta\ltimes(g\ltimes h)$, thus
interpreting $T$ as a random map over the base
$(\Omega,\cF,\Proj,\theta)$. In this case the fibre maps $T_\omega$
have a skew product structure themselves, and $T_\omega:M\to M$ is a
continuous transformation of the form
$T_\omega(\xi,y)=(g_\omega(\xi),h_{\omega,\xi}(y))$.  Given $T$ as in
(\ref{e.10}) and a random $T$-invariant set $K$, we let
$K(\omega)=\{(\xi,y)\in M\mid (\omega,\xi,y)\in K\}$ and
$K(\omega,\xi)=\{y\in\R^d\mid (\omega,\xi,y)\in K\}$. Then
$T_\omega(K(\omega))=K(\theta\omega)$ and
$h_{\omega,\xi}(K(\omega,\xi)) = K(\theta\ltimes g(\omega,\xi))$ (see
Lemma~\ref{l.K-twice} below).

Similar to above, the Lyapunov exponent of a $T$-invariant measure
$\mu$ is defined as
\begin{equation}
  \lambda(\mu,T) \ = \ \mu(\bar \Phi) \ = \ \inf_{n\in\N}\ntel \mu(\Phi_n) \ ,
\end{equation}
where now the subadditive sequence $\Phi_n$ is given by
$\Phi_n(\omega,\xi,y)=\log\|D_y h^{n}_{\omega,\xi}(y)\|$. We will
prove the following slightly more general version of
Theorem~\ref{t.random-sturman-stark_intro}.

\begin{thm}\label{t.random-sturman-stark}
  Let $T$ be a random map of the form (\ref{e.10}) and $K$ be a
  $T$-invariant random compact set.  Suppose that for all $k\in\N$ and
  all $\epsilon>0$ there is $r>0$ such that
\begin{equation}\label{eq:extra-uniform}
\begin{split}
  \sup&\left\{\Phi_k(\omega,\xi,y)\mid \omega\in\Omega, (\xi,y)\in
    B_r(K(\omega))\right\}
  \\
  & \leqslant \  \sup\left\{\Phi_k(\omega,\xi,y)\mid \omega\in\Omega,
    (\xi,y)\in K(\omega)\right\} \ + \ \epsilon 
\end{split}
\end{equation}
where $B_r(K(\omega))=\{(\xi,y)\in\Xi\times\R^d\mid
d((\xi,y),K(\omega))<r\}$ and $d$ is a canonical product metric on
$\xi\times\R^d$.  Then, if $\lambda(\mu,T)<0$ for all measures
$\mu\in\cM_\Proj^K(T)$, and if $\theta\ltimes g$ is a random minimal
homeomorphism on $\Omega\times\Xi$, there are a (non-random) integer
$n>0$ and a random variable $c(\omega)>0$ such that, for $\Proj$-a.e.
$\omega\in\Omega$, \romanlist
\item $\# K(\omega,\xi)=n$ for all $\xi\in\Xi$, 
\item the map $\xi\mapsto K(\omega,\xi)$ from $\Xi$ to $\cK(\R^d)$ is continuous, and
\item for all $\xi\in\Xi$, any two different points $y,y'\in
  K(\omega,\xi)$ have distance at least $c(\omega)$.  \listend
\end{thm}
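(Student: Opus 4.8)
The plan is to extract from the hypotheses a fibrewise contraction on a neighbourhood of $K$ of \emph{fixed} (non-random) thickness and then to run a pull-back attractor argument of Sturman--Stark type, using random minimality to replace the topological input that the base $\Omega$ lacks. First I would apply Theorem~\ref{t.complement-to-main} to the subadditive sequence $\Phi_n(\omega,\xi,y)=\log\|D_yh^n_{\omega,\xi}(y)\|$ with $\lambda=0$ (via the truncation of Remark~\ref{r.minus_infinity} if the fibre maps $h_{\omega,\xi}$ need not be invertible); this is legitimate since $\overline\Phi_\mu=\lambda(\mu,T)<0$ for all $\mu\in\cM^K_\Proj(T)$, so $\overline\Phi^K<0$ and $\frac1k\Proj(\Phi_k^K)<0$ for large $k$. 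Hypothesis (\ref{eq:extra-uniform}) enters exactly here: it furnishes a uniform upper bound for $\Phi_k^{B_r(K)}(\omega):=\sup\{\Phi_k(\omega,\xi,y):(\xi,y)\in B_r(K(\omega))\}$, so by dominated convergence $\Proj(\Phi_k^{B_r(K)})\to\Proj(\Phi_k^K)$ as $r\downarrow0$. Fixing $k$ large and then $r>0$ small enough that $\Proj(\Phi_k^{B_r(K)})<k\lambda''$ for some $\lambda''<0$, the construction in the proof of Theorem~\ref{t.complement-to-main} goes through verbatim with $\Phi_k^{B_r(K)}$ in place of $\Phi_k^K$ and yields an adjusted $\CC_k\leq0$ and a $\theta^k$-ergodic component $\Omega_k$, $\Proj(\Omega_k)\geq1/k$, with
\begin{equation*}
 \Phi_k(\omega,\xi,y)\ \leq\ \CC_k(\theta^k\omega)-\CC_k(\omega)+k\lambda''
 \qquad\text{for }\Proj\text{-a.e.\ }\omega\in\Omega_k\text{ and all }(\xi,y)\in B_r(K(\omega)).
\end{equation*}

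Next I would run a pull-back argument. For $\omega\in\Omega_k$ the backward $\theta^k$-orbit stays in $\Omega_k$, and iterating the displayed bound (after rescaling the $\R^d$-coordinate by $e^{-\CC_k(\omega)}$) shows that every connected component of the pull-back $T^{nk}_{\theta^{-nk}\omega}\bigl(B_r(K(\theta^{-nk}\omega))\bigr)$ has $\R^d$-width at most $e^{\CC_k(\omega)-\CC_k(\theta^{-nk}\omega)+nk\lambda''}\,r$, which tends to $0$ as $n\to\infty$ because $\CC_k$ is adjusted and $\lambda''<0$. Since $K$ is $T$-invariant, $K(\omega)$ lies in each of these pull-backs; a standard non-uniform-hyperbolicity bookkeeping (continuous images of connected sets stay connected, widths shrink to zero) then shows that $A(\omega):=\bigcap_{n\geq0}T^{nk}_{\theta^{-nk}\omega}\bigl(B_r(K(\theta^{-nk}\omega))\bigr)$ meets every fibre $\{\xi\}\times\R^d$ in finitely many points, which depend continuously on $\xi$ (being limits of the continuously $\xi$-varying pull-backs), and $K(\omega)\subseteq A(\omega)$. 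Transporting along the $\theta$-orbit of $\Omega_k$, which is co-null by ergodicity of $\theta$, we obtain for $\Proj$-a.e.\ $\omega$ that $\#K(\omega,\xi)<\infty$ for all $\xi$ and that $K$ is, locally in $\xi$, a finite union of continuous graphs.

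Finally the three assertions follow. As $h_{\omega,\xi}$ maps the $\xi$-fibre of $K$ onto the one over $(\theta\omega,g_\omega\xi)$, the function $\omega\mapsto\sup_\xi\#K(\omega,\xi)$ does not increase under $\theta$ and is $\Proj$-a.e.\ finite, hence $\Proj$-a.e.\ equal to a non-random $n\geq1$ by ergodicity; in particular $\#K(\omega,\xi)\leq n$ for all $\xi$, with equality for at least one $\xi$. Then $D(\omega):=\{\xi\in\Xi:\#K(\omega,\xi)<n\}$ is $(\theta\ltimes g)$-forward invariant (cardinality does not increase along the dynamics) and random closed (by the local graph description, $\{\xi:\#K(\omega,\xi)=n\}$ is open), and $D(\omega)\neq\Xi$, so the dichotomy of Definition~\ref{def:minimality} forces $D(\omega)=\emptyset$ for $\Proj$-a.e.\ $\omega$ — this is (i) — and the local graphs then make $\xi\mapsto K(\omega,\xi)$ continuous from $\Xi$ into $\cK(\R^d)$, which is (ii). For (iii), writing $K(\omega,\xi)=\{\varphi_1(\omega,\xi),\dots,\varphi_n(\omega,\xi)\}$ with locally continuous $\varphi_i$ that are pairwise distinct at every $\xi$ (else some fibre would have fewer than $n$ points), $c(\omega):=\min_{\xi\in\Xi}\min_{i\neq j}|\varphi_i(\omega,\xi)-\varphi_j(\omega,\xi)|$ is the minimum over the compact $\Xi$ of a strictly positive continuous function, hence positive. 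I expect the main obstacle to be the second step: squeezing a contraction onto a fixed-thickness tube out of (\ref{eq:extra-uniform}), and then carrying the deterministic pull-back attractor machinery through over a merely measurable base $\Omega$, keeping track of measurability and of the non-uniform (temperedness) constants.
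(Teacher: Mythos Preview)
Your overall architecture --- extract a fibrewise contraction on a fixed-width tube around $K$ via Theorem~\ref{t.complement-to-main} and (\ref{eq:extra-uniform}), then feed ergodicity and random minimality --- is exactly the paper's, but the step you label ``standard non-uniform-hyperbolicity bookkeeping'' is where the real content sits, and your sketch does not supply it. Pulling the tube back along $T^{nk}$ only recovers $K$: since $K$ is $T$-invariant, $K(\omega)$ sits inside every $T^{nk}_{\theta^{-nk}\omega}\bigl(B_r(K(\theta^{-nk}\omega))\bigr)$, and the contraction estimate merely says the $\xi$-slice of the $n$-th pull-back lies in an $re^{\CC_k(\omega)}e^{-n\eta}$-neighbourhood of $K(\omega,\xi)$; nothing here forces $K(\omega,\xi)$ itself to be finite. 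The paper's device is a covering-number argument with tempered radii: set $N_\epsilon(\omega,\xi)$ equal to the minimal number of balls of radius $\epsilon\,e^{\CC_k(\omega)}$, centred on $K(\omega,\xi)$, needed to cover $K(\omega,\xi)$. Because $\CC_k\leq 0$ these balls lie in the tube, so the derivative bound applies across each of them and one obtains $N_\epsilon\circ(\theta\ltimes g)^k\leq N_{e^{-\eta}\epsilon}\circ(\theta\ltimes g)^k\leq N_\epsilon$. Ergodicity of $\theta^k$ on $\Omega_k$ forces $\min_\xi N_\epsilon(\omega,\xi)$ to be an $\omega$-independent integer $n$, and the first inequality makes $n$ scale-independent as well. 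This delivers finiteness and the uniform gap $c(\omega)=re^{\CC_k(\omega)}/2$ of assertion~(iii) simultaneously, on an open forward-invariant random set which minimality then promotes to all of $\Xi$.

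There is also a circularity in your derivation of (ii) and (iii). You infer Hausdorff-continuity of $\xi\mapsto K(\omega,\xi)$ from the ``local graph description'', and then obtain $c(\omega)$ as the minimum over compact $\Xi$ of a positive continuous function. But constant cardinality plus compactness of $K(\omega)$ does \emph{not} give Hausdorff-continuity of the slices: take $\Xi=[0,1]$, $K(\xi)=\{0,\xi\}$ for $\xi>0$ and $K(0)=\{0,1\}$; this $K$ is compact, every slice has two points, yet $K(\xi)\to\{0\}\neq K(0)$ as $\xi\downarrow 0$. One needs (iii) first --- which is exactly what the covering-number argument provides --- and then (i)$+$(iii)$+$compactness yield (ii). Your step~1 variant (building $\CC_k$ directly from $\Phi_k^{B_r(K)}$ rather than from $\Phi_k^K$ and then invoking (\ref{eq:extra-uniform})) is workable, but note that $B_r(K)$ is neither compact nor forward $T$-invariant, so you are re-running the \emph{construction} inside the proof of Theorem~\ref{t.complement-to-main}, not applying the theorem itself.
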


If $T:\Omega\times\R^d\to\Omega\times\R^d$ is a random map with base
$(\Omega,\cF,\Proj,\theta)$ then we can add a trivial component $\Xi=\{\xi_0\}$
and let $g_\omega(\xi_0)=\xi_0$ to apply
Theorem~\ref{t.random-sturman-stark}. As $\theta\ltimes g$ is certainly random
minimal in this case, this immediately yields
\begin{cor}
  \label{c.singlepoints} Suppose
  $T:\Omega\times\R^d\to\Omega\times\R^d$ is a random
  $\mathcal{C}^1$-map with base $(\Omega,\cF,\Proj,\theta)$ and $K\ssq
  \Omega\times \R^d$ is a random compact set such that
  (\ref{eq:extra-uniform}) is satisfied and $\lambda(\mu,T)<0$ for all
  $\mu\in\cM_\Proj^K(T)$. Then there exists an integer $n$ such that
  $\# K(\omega)= n$ for $\Proj$-a.e.\ $\omega\in\Omega$.
\end{cor}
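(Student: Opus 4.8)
The plan is to deduce the corollary from Theorem~\ref{t.random-sturman-stark} by viewing the purely random system $T:\Omega\times\R^d\to\Omega\times\R^d$ as a degenerate double skew product of the form (\ref{e.10}), exactly as indicated in the sentence preceding the statement, and then checking that all the hypotheses transfer.

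Concretely, I would take $\Xi=\{\xi_0\}$, a one-point compact metric space, set $g_\omega(\xi_0)=\xi_0$ and $h_{\omega,\xi_0}=T_\omega$, and identify $M=\Xi\times\R^d$ with $\R^d$ via $(\xi_0,y)\mapsto y$. Under this identification the map $(\omega,\xi_0,y)\mapsto(\theta\omega,\xi_0,T_\omega(y))$ is conjugate to $T$, the (invariant) random compact set becomes $\{\xi_0\}\times K$, and $h^n_{\omega,\xi_0}=T^n_\omega$, so the subadditive sequence $\Phi_n(\omega,\xi_0,y)=\log\|D_y h^n_{\omega,\xi_0}(y)\|$ equals $\log\|DT^n_\omega(y)\|$. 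Hence $\cM_\Proj^K(T)$ and the numbers $\lambda(\mu,T)$ are unchanged, so the assumption $\lambda(\mu,T)<0$ for all $\mu\in\cM_\Proj^K(T)$ is exactly the one required by Theorem~\ref{t.random-sturman-stark}; and since the product metric on $\{\xi_0\}\times\R^d$ reduces to the metric on $\R^d$, condition (\ref{eq:extra-uniform}) of that theorem is literally the hypothesis imposed in the corollary.

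The only point that genuinely needs an argument is that $\theta\ltimes g$ is a random minimal homeomorphism in the sense of Definition~\ref{def:minimality}. Given a $(\theta\ltimes g)$-forward invariant random closed set $L\subseteq\Omega\times\Xi$, I would note that $L(\omega)\in\{\emptyset,\{\xi_0\}\}$ for every $\omega$ and that $A:=\{\omega\mid L(\omega)=\{\xi_0\}\}$ is measurable because $\omega\mapsto d(\xi_0,L(\omega))$ is. Forward invariance $g_\omega(L(\omega))\subseteq L(\theta\omega)$ then gives, for $\Proj$-a.e.\ $\omega$, the implication $\omega\in A\Rightarrow\xi_0=g_\omega(\xi_0)\in L(\theta\omega)\Rightarrow\theta\omega\in A$, i.e.\ $A\subseteq\theta^{-1}A$ up to a $\Proj$-null set. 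Since $\theta$ preserves $\Proj$, this forces $\Proj(A\,\triangle\,\theta^{-1}A)=0$, and ergodicity of $\theta$ yields $\Proj(A)\in\{0,1\}$, which is precisely the dichotomy of Definition~\ref{def:minimality}.

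With all hypotheses in place, Theorem~\ref{t.random-sturman-stark} will produce a non-random integer $n>0$ with $\#K(\omega,\xi_0)=n$ for $\Proj$-a.e.\ $\omega$, and undoing the identification $M\cong\R^d$ gives $\#K(\omega)=n$ for $\Proj$-a.e.\ $\omega$, as claimed; conclusions (ii) and (iii) of the theorem carry no information here, continuity over a one-point space being automatic. I do not expect any real obstacle: everything is routine bookkeeping except the random-minimality verification, and that, as sketched, reduces to ergodicity of the base transformation $\theta$.
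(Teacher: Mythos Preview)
Your proposal is correct and follows exactly the paper's approach: introduce a trivial one-point factor $\Xi=\{\xi_0\}$ with $g_\omega(\xi_0)=\xi_0$ and apply Theorem~\ref{t.random-sturman-stark}. The paper dismisses the random-minimality check with the phrase ``certainly random minimal,'' whereas you spell out the ergodicity argument; otherwise the two are identical.
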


The minimality assumption on $\theta\ltimes g$ in
Theorem~\ref{t.random-sturman-stark} can also be replaced by requiring
connectedness of the sets $K(\omega,\xi)$. We say the random compact
set $K\ssq \Omega\times \Xi\times\R^d$ has {\em connected fibres} if
$K(\omega,\xi)$ is connected for $\Proj$-a.e.\ $\omega\in\Omega$ and all
$\xi\in\Xi$.
\begin{thm}\label{t.connected}
  Let $T$ be a random map of the form (\ref{e.10}) and $K$ be a
  $T$-invariant random compact set with connected fibres.  Suppose
  that for all $k\in\N$ and all $\epsilon>0$ there is $r>0$ such that
  (\ref{eq:extra-uniform}) holds and that $\lambda(\mu,T)<0$ for all
  measures $\mu\in\cM_\Proj^K(T)$. Then for $\Proj$-a.e.\
  $\omega\in\Omega$ the fibre $K(\omega)$ consists of a single
  continuous graph, that is, there is a random continuous function
  $\phi:\Omega\times\Theta\to\R^d$ such that
  $K(\omega)=\{(\xi,\phi(\omega,\xi))\ \mid\ \xi\in \Xi\}$ for
  $\Proj$-a.e.  $\omega\in\Omega$.
\end{thm}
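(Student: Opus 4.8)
\emph{Proof plan.} A nonempty connected subset of $\R^d$ that is not a single point has positive diameter, so it suffices to show that $D(\omega):=\sup_{\xi\in\Xi}\diam K(\omega,\xi)$, which is finite ($\leq\diam K(\omega)$) and measurable by an argument like that of Lemma~\ref{lemma:measurability}, vanishes for $\Proj$-a.e.\ $\omega$. Since the set $\{D=0\}$ is forward $\theta$-invariant --- if every fibre $K(\omega,\xi)$ is a singleton, so is every image $h_{\omega,\xi}(K(\omega,\xi))=K(\theta\ltimes g(\omega,\xi))$ (Lemma~\ref{l.K-twice}), whence $D(\theta\omega)=0$ as $g_\omega$ is onto --- ergodicity of $\theta$ reduces the task to proving $\Proj(\{D=0\})>0$. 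Granting this, one defines $\phi(\omega,\xi)$ as the unique element of $K(\omega,\xi)$ (every fibre being nonempty); then $\phi$ is measurable in $\omega$ by Lemma~\ref{lemma:measurability}, the map $\xi\mapsto K(\omega,\xi)$ is upper semicontinuous into $\cK(\R^d)$ (its graph $K(\omega)$ being compact) and single-valued, hence continuous, and \eqref{eq:invgraph2} for $\phi$ is just the fibrewise form of $T$-invariance of $K$.

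For the contraction I would proceed as in the proof of Theorem~\ref{t.random-sturman-stark}. Applying the Random Semiuniform Ergodic Theorem (Theorem~\ref{t.random_semiuniform}) to the subadditive sequence $\Phi_n(\omega,\xi,y)=\log\|D_yh^n_{\omega,\xi}(y)\|$ with $\lambda=0$ --- legitimate since $\overline\Phi_\mu=\lambda(\mu,T)<0$ for all $\mu\in\cM^K_\Proj(T)$, hence $\overline\Phi^K<0$ by Lemma~\ref{l.maximizing_measure} (the supremum there is attained) --- gives $\lambda'<0$ and an adjusted $C\geq0$ with $\log\|D_yh^n_{\omega,\xi}(y)\|\leq C(\omega)+n\lambda'$ for $\Proj$-a.e.\ $\omega$, all $n$, and all $(\xi,y)\in K(\omega)$. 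Hypothesis \eqref{eq:extra-uniform} is then used to push this onto a neighbourhood of $K$; its role is precisely to provide a \emph{deterministic} ($\omega$-independent) radius on which the derivative bound is lost only by an arbitrarily small additive amount --- an $\omega$-dependent radius could collapse along the orbit and would be useless. Combining this with Theorem~\ref{t.complement-to-main} one gets a fixed $n_0$, a $\lambda''\in(\lambda',0)$ and a forward-invariant random neighbourhood of $K$ (i.e.\ $h^{n_0}_{\omega,\xi}$ maps the $\omega$-slice into the $\theta^{n_0}\omega$-slice) on which $\log\mathrm{Lip}(h^{n_0}_{\omega,\xi})\leq L(\omega)$, where on a suitable ergodic component $\Omega_{n_0}$ of $\theta^{n_0}$ one has $\E_\Proj[L\mid\cI_{n_0}]<0$, so that $\sum_{j=0}^{m-1}L(\theta^{jn_0}\omega)\to-\infty$ $\Proj$-a.s.\ on $\Omega_{n_0}$ by Birkhoff's theorem.

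Connectedness enters here. For a connected $S\subseteq\R^d$ one has $\conv S\subseteq\overline{B}(y,\diam S)$ for any $y\in S$, so once $\diam K(\theta^{jn_0}\omega,g^{jn_0}_\omega\xi)$ is below the width of the neighbourhood at $\theta^{jn_0}\omega$, the segment joining any two points of that fibre lies in the neighbourhood and the mean value inequality gives $\diam K(\theta^{(j+1)n_0}\omega,g^{(j+1)n_0}_\omega\xi)\leq e^{L(\theta^{jn_0}\omega)}\diam K(\theta^{jn_0}\omega,g^{jn_0}_\omega\xi)$; the finitely many initial steps before this regime is reached cost only a fixed random factor, estimated by covering $K(\omega,\xi)$ with finitely many balls of the deterministic radius from \eqref{eq:extra-uniform}. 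Iterating, for $\omega\in\Omega_{n_0}$ one obtains $\diam K(\theta^{mn_0}\omega,g^{mn_0}_\omega\xi)\leq\exp\!\bigl(\textstyle\sum_{j<m}L(\theta^{jn_0}\omega)+o(m)\bigr)\cdot\mathrm{const}(\omega)\to0$, uniformly in $\xi$ (the factors do not involve $\xi$, while $g^{mn_0}_\omega$ exhausts $\Xi$), i.e.\ $D(\theta^{mn_0}\omega)\to0$. Hence, for each $\eps>0$, $\mathbf 1_{\{D>\eps\}}(\theta^{mn_0}\omega)\to0$ $\Proj$-a.s.\ on $\Omega_{n_0}$, and Birkhoff's theorem for the ergodic map $\theta^{n_0}|_{\Omega_{n_0}}$ forces $\Proj(\{D>\eps\}\cap\Omega_{n_0})=0$; letting $\eps\downarrow0$ gives $\Proj(\{D=0\})\geq\Proj(\Omega_{n_0})>0$, which is what was needed.

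The main obstacle is the construction sketched in the second paragraph: turning the RSET estimate, valid a priori only on $K$, into genuine contraction of \emph{entire connected fibres}, whose convex hulls (or chains of small step) may leave any region where the derivative is controlled and then drift off it under iteration. This is exactly where the deterministic neighbourhood from \eqref{eq:extra-uniform}, the connectedness of the fibres (which keeps the spanning object within $\diam K(\omega,\xi)$ of $K$), and the ergodic-component bookkeeping of Theorem~\ref{t.complement-to-main} (to make $\sum_{j<m}L(\theta^{jn_0}\omega)$ tend to $-\infty$) must all be assembled --- the analogue for connected fibres of the counting argument underlying Theorem~\ref{t.random-sturman-stark}.
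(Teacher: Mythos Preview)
Your route is genuinely different from the paper's, and the paper's is considerably shorter. The paper does \emph{not} attempt a direct diameter-contraction argument. Instead it invokes Proposition~\ref{p.main} (which needs no minimality assumption): part~\ref{it:prop3} says that for $\Proj$-a.e.\ $\omega$ and every $\xi$, any two distinct points of $K(\omega,\xi)$ are at least $c(\omega)>0$ apart, and part~\ref{it:prop2} says $\sup_\xi\#K(\omega,\xi)<\infty$. Either way, each $K(\omega,\xi)$ is finite; since it is also connected, it is a singleton. The graph structure and continuity then follow from compactness of $K(\omega)$, and measurability of $\phi$ from the fact that it is the unique selection of $K$. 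Thus connectedness is used only in the trivial ``connected $+$ finite $\Rightarrow$ singleton'' step, \emph{after} all the hard work (done in the proof of Proposition~\ref{p.main} via the covering-number monotonicity $N_\epsilon((\theta\ltimes g)^k(\omega,\xi))\leq N_{e^{-\eta}\epsilon}((\theta\ltimes g)^k(\omega,\xi))\leq N_\epsilon(\omega,\xi)$).

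Your diameter argument, by contrast, has a real gap at precisely the point you flag as the ``main obstacle''. Once $\diam K(\omega,\xi)$ is below the deterministic radius $r$, the mean-value inequality on segments does give the contraction you want (and this part, incidentally, uses nothing about connectedness: $\conv S\subseteq\overline B(y,\diam S)$ holds for any bounded $S$). The problem is getting there. Your claim that ``the finitely many initial steps before this regime is reached cost only a fixed random factor, estimated by covering $K(\omega,\xi)$ with finitely many balls of the deterministic radius'' is not substantiated. If you cover the connected set $K(\omega,\xi)$ by $N$ balls of radius $r$ and chain them, you obtain $\diam h^k_{\omega,\xi}(K(\omega,\xi))\le N\cdot e^{L(\omega)}\cdot 2r$, but you have no control on the $r$-covering number of the \emph{image} fibre, so the bound does not iterate; for $d\ge 2$ a connected compact set can become arbitrarily ``wiggly'' under a diffeomorphism, and nothing you have written prevents the covering number from growing faster than the per-ball contraction. (For $d=1$ your argument does work, since the segment between two points of the interval $K(\omega,\xi)$ lies in $K(\omega,\xi)$ itself.) What is actually needed to close this is exactly the monotonicity of the scaled covering number proved in Proposition~\ref{p.main}---at which point one might as well use that proposition directly and finish in one line, as the paper does.
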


Both theorems are consequences of the following more technical
proposition. We say an open or closed random set $A\ssq \Omega\times
\Xi$ is {\em non-empty}, if $\{\omega\in\Omega\mid A(\omega)\neq
\emptyset\}$ has positive measure. Note that if $A$ is $(\theta\ltimes
g)$-invariant and $\theta$ is ergodic, then non-emptiness of $A$
implies that $\{\omega\in\Omega\mid A(\omega)=\emptyset\}$ has measure
zero.  Similarly, equality of random sets will always be understood as
an equality of the fibres for $\Proj$-a.e.\ $\omega\in\Omega$.
\begin{prop}\label{p.main}
  Let $T$ be a random map of the form (\ref{e.10}) and $K$ be a
  $T$-invariant random compact set.  Suppose that for all $k\in\N$ and
  all $\epsilon>0$ there is $r>0$ such that (\ref{eq:extra-uniform})
  holds.  Then, if $\lambda(\mu,T)<0$ for all measures
  $\mu\in\cM_\Proj^K(T)$, there are a positive integer $n$, a random
  variable $c:\Omega\to(0,\infty)$ and a non-empty open and
  ${(\theta\ltimes g)}$-forward invariant random set $A$ such that,
  for $\Proj$-a.e. $\omega\in\Omega$, \romanlist
\item $\# K(\omega,\xi)=n$ for all $\xi\in A(\omega)$, 
\item\label{it:prop2} $\sup\left\{\# K(\omega,\xi)\mid \xi\in\Xi\right\}<\infty$,
  and
\item\label{it:prop3} for  all $\xi\in\Xi$, any two different
points $y,y'\in K(\omega,\xi)$ have distance at least $c(\omega)$. 
\listend
\end{prop}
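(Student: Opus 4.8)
The plan is to apply the random semiuniform ergodic theorem (Theorem~\ref{t.random_semiuniform}) together with its variant (Theorem~\ref{t.complement-to-main}) to the subadditive sequence $\Phi_n(\omega,\xi,y)=\log\|D_yh^n_{\omega,\xi}(y)\|$ and the forward $T$-invariant random compact set $K$, in order to obtain a uniform fibrewise contraction estimate in a neighbourhood of $K$. Concretely, since $\lambda(\mu,T)=\overline\Phi_\mu<0$ for all $\mu\in\cM^K_\Proj(T)$, Theorem~\ref{t.random_semiuniform} gives $\lambda'<0$ and an adjusted random variable $C$ with $\Phi_n(\omega,\xi,y)\le C(\omega)+n\lambda'$ for all $n$, $\Proj$-a.e.\ $\omega$ and all $(\xi,y)\in K(\omega)$. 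Using the extra uniformity hypothesis (\ref{eq:extra-uniform}), the same estimate (with a slightly larger $\lambda'$, still negative) extends to $B_{r}(K(\omega))$ for a suitable fibrewise radius $r$; here one fixes $k$ large, applies (\ref{eq:extra-uniform}) to control $\Phi_k$ on the $r$-neighbourhood, and then runs the telescoping argument for $\Phi_k\circ T^{jk}$ along orbits that stay near $K$ — this requires first shrinking the neighbourhood so that the whole forward orbit of a point in $B_r(K(\omega))$ remains in $B_r(K(\theta^j\omega))$, which follows from the contraction in the $y$-direction once the estimate is bootstrapped. The upshot is: for $\Proj$-a.e.\ $\omega$ and all $\xi\in\Xi$, the map $h^n_{\omega,\xi}$ contracts the fibre $K(\omega,\xi)$ exponentially, uniformly in $\xi$, at a rate governed by $C(\omega)$ and $\lambda'$.

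From this contraction one extracts the three conclusions. For (iii): if $y,y'\in K(\omega,\xi)$ are too close, their forward images under $h^n$ stay within a small tube; but if they were distinct and at distance $<\varepsilon$ for arbitrarily small $\varepsilon$, a compactness/recurrence argument using Poincaré recurrence for $\theta$ (to return near a fixed good $\omega$) forces a contradiction with the fact that $K(\theta\ltimes g(\omega,\xi))$ still consists of genuinely distinct points after applying $h_{\omega,\xi}$, since $h$ is a bijection on fibres (Lemma~\ref{l.K-twice}). The clean way is: the number of points in $K(\omega,\xi)$ that can be packed into a set that gets contracted is bounded — more precisely, $\#K(\omega,\xi)$ is finite for a.e.\ $\omega$ and a.e.\ $\xi$, and using $h_{\omega,\xi}(K(\omega,\xi))=K(\theta\ltimes g(\omega,\xi))$ together with injectivity of $h_{\omega,\xi}$ shows $\xi\mapsto\#K(\omega,\xi)$ is $g_\omega$-covariant, hence (by ergodicity of $\theta$ and invariance considerations) essentially constant, giving (ii) and the existence of $n$ and $c(\omega)$. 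For (i) and the set $A$: let $A(\omega)=\{\xi\in\Xi\mid \#K(\omega,\xi)=n\text{ and the $n$ points are $c(\omega)$-separated and locally ``rigid''}\}$; this is fibrewise open because $K$ is compact and the $n$ points vary upper-semicontinuously, it is $(\theta\ltimes g)$-forward invariant because the contraction property is propagated by the dynamics (a point in $A(\omega)$ maps into $A(\theta\omega)$ under $g_\omega$), and it is non-empty because $n$ is the \emph{minimal} fibre cardinality on a positive-measure set of $(\omega,\xi)$ and near such a $\xi$ the cardinality cannot drop. The random variable $c(\omega)$ is obtained from $C(\omega)$ and $\lambda'$ by solving for the largest separation that survives the contraction; adjustedness of $C$ guarantees $c(\omega)>0$ a.e.

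The main obstacle I expect is the passage from the $K$-estimate to the neighbourhood estimate while simultaneously ensuring forward orbits do not escape the neighbourhood — this is a self-referential bootstrap (one needs the contraction to keep orbits close, but one proves the contraction using closeness), handled by first proving a one-step estimate on $B_r(K(\omega))$ via (\ref{eq:extra-uniform}) and Theorem~\ref{t.complement-to-main} (which crucially provides the \emph{additive} cocycle form $\CC_k(\theta^k\omega)-\CC_k(\omega)+k\lambda'$, cleaner than the sup-form of $C$), then choosing $r=r(\omega)$ measurably so small that one step keeps $B_{r}(K(\omega))$ inside $B_{r}(K(\theta\omega))$ after contraction, and finally iterating. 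A secondary subtlety is the measurability bookkeeping for $A$, $c$, and $n$: one should define everything in terms of the measurable selections from Lemma~\ref{lemma:measurability} and invoke the measurable-selection/random-compact-set machinery from \cite{CV1977} as in the proof of that lemma, and use ergodicity of $\theta$ to conclude that the $\Proj$-essential infimum of $\#K(\omega,\xi)$ is attained and constant.
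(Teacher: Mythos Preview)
Your plan has the right ingredients (Theorem~\ref{t.complement-to-main} and hypothesis (\ref{eq:extra-uniform})) but misses the device that makes them work, and the bootstrap you outline as the ``main obstacle'' does not close as you describe it. You want to shrink $r=r(\omega)$ so that one step of $T$ maps $B_{r}(K(\omega))$ into $B_{r}(K(\theta\omega))$; but for this you need an \emph{a priori} one-step contraction by a factor $<1$, whereas the cocycle bound $\Phi_k\le \CC(\theta^k\omega)-\CC(\omega)+k\lambda'$ gives a one-step factor $e^{\CC(\theta^k\omega)-\CC(\omega)}$ that can be arbitrarily large. The Poincar\'e-recurrence argument you sketch for (iii) is too vague to repair this, and the claim that $\#K(\omega,\xi)$ is ``essentially constant'' by ergodicity is not correct as stated: the cardinality is covariant under $(\theta\ltimes g)$, not under $\theta$ alone, so ergodicity of $\theta$ says nothing about its value at a fixed $\xi$.

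The paper's resolution avoids the bootstrap entirely by absorbing the cocycle $\CC$ into the \emph{scale}. One defines, for each $\epsilon\in(0,r]$, the covering number $N_\epsilon(\omega,\xi)$ as the least number of balls of radius $\epsilon e^{\CC(\omega)}$ centred in $K(\omega,\xi)$ needed to cover $K(\omega,\xi)$. Since a ball of radius $\epsilon e^{\CC(\omega)}$ sits inside $B_r(K(\omega))$ (recall $\CC\le 0$), the cocycle estimate shows that its $h^k_{\omega,\xi}$-image lies in a ball of radius $e^{-\eta}\epsilon\, e^{\CC(\theta^k\omega)}$ centred in $K((\theta\ltimes g)^k(\omega,\xi))$. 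Hence $N_\epsilon$ is non-increasing along $(\theta\ltimes g)^k$-orbits and, moreover, $N_{e^{-\eta}\epsilon}\circ(\theta\ltimes g)^k\le N_\epsilon$. Ergodicity of $\theta^k$ on the component $\Omega_k$ then forces the fibrewise minimum $n_\epsilon(\omega)=\min_\xi N_\epsilon(\omega,\xi)$ to be a.s.\ constant \emph{and} independent of $\epsilon$; this common value is $n$, the open set is $A=\{N_\epsilon<n+1\}$ (pushed forward along $\theta\ltimes g$ to all of $\Omega$), and scale-independence of $N_\epsilon$ immediately gives the separation $c(\omega)=r e^{\CC(\omega)}/2$. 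No trapping of orbits in a fixed-width tube is needed, and (i)--(iii) fall out simultaneously from the covering-number monotonicity rather than from separate recurrence or injectivity arguments.
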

\begin{proof}[\bf Proof of Theorem~\ref{t.random-sturman-stark}]
  As $A$ is a nonempty forward $(\theta\ltimes g)$-invariant random
  set, $A^c$ is a backwards $(\theta\ltimes g)$-invariant random
  compact set and $A^c(\omega)\neq\Xi$ for $\Proj$-a.e.\ $\omega$. As
  $\theta\ltimes g$ is minimal, $A^c(\omega)=\emptyset$ for
  $\Proj$-a.e. $\omega$ (see Lemma~\ref{lem:minimal-equiv}(ii) below),
  and hence $A(\omega)=\Xi$ for $\Proj$-a.e. $\omega$.  Assertions (i)
  and (iii) of the proposition together with the compactness of
  $K(\omega)$ imply the continuity of the map $\xi\mapsto
  K(\omega,\xi)$ from $\Xi$ to $\cK(\R^d)$.
\end{proof}
\begin{proof}[\bf Proof of Theorem~\ref{t.connected}]
  By assertion (ii) of the proposition, together with the
  connectedness of the fibres of $K$, there is a subset
  $\Omega_0\subseteq\Omega$ of full measure such that
  $K(\omega,\xi)\subset\R^d$ consists of a single point for all
  $(\omega,\xi)\in\Omega_0\times\Xi$. As
  $K(\omega)\subset\Xi\times\R^d$ is compact, it must be the graph of
  a continuous map $\phi(\omega,\,.\,):\Xi\to\R^d$. As
  $\{\phi(\omega,\xi)\}=K(\omega,\xi)$ is the only possible selection
  of $K$, $\phi$ is measurable \cite[Proposition 1.6.3]{Arnold1998RandomDynamicalSystems}.
\end{proof}

\begin{proof}[\bf Proof of Proposition~\ref{p.main}]
  By Theorem~\ref{t.complement-to-main} there exist $\lambda'<0$,
  $k\in\N\setminus\{0\}$, an ergodic component $\Omega_k$ of $\theta^k$, and an
  adjusted random variable $\CC:\Omega\to(-\infty,0]$ such that
\begin{displaymath}
  \Phi_k(\omega,\xi,y)
  \leqslant
  \CC(\theta^k\omega)-\CC(\omega)+k\lambda'
  \quad\text{for $\Proj$-a.e. $\omega\in\Omega_k$ and all $(\xi,y)\in K(\omega)$.}
\end{displaymath}
Hence, in view of assumption (\ref{eq:extra-uniform}), there are $r>0$ and
$\eta>0$ such that
\begin{equation}\label{eq:fibre-wise_contraction}
  \log\left\|D_y h^{k}_{\omega,\xi}(y)\right\|
  \leqslant
  \CC(\theta^k\omega)-\CC(\omega)- \eta
  \quad\text{for $\Proj$-a.e. $\omega\in\Omega_k$ and all
   $(\xi,y)\in B_{r}\left( K(\omega)\right)$.}
\end{equation}

Now we consider $T$ as $(\theta\ltimes g)\ltimes h$. We defer the
proof of the following two lemmas to the end of this section.
\begin{lemma}\label{l.K-twice}
  $K$ is a random compact set over the base
  $(\Omega\times\Xi,\cF\times\cB)$, and $h_{\omega,\xi}(K(\omega,\xi))=
  K((\theta\ltimes g)(\omega,\xi))$ for $\Proj$-a.e.all $\omega\in\Omega$ and 
  $\xi\in\Xi$.
\end{lemma}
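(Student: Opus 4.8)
There are two assertions: that $K$ is a random compact set over $(\Omega\times\Xi,\cF\times\cB)$, and the fibrewise invariance identity. I would treat them separately, in that order.

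\emph{$K$ as a random compact set over $(\Omega\times\Xi,\cF\times\cB)$.} Fix $(\omega,\xi)$. The fibre $K(\omega,\xi)=\{y\in\R^d\mid(\xi,y)\in K(\omega)\}$ is the preimage of the compact set $K(\omega)\subseteq\Xi\times\R^d$ under the continuous embedding $y\mapsto(\xi,y)$, hence closed; it is bounded because $K(\omega)$ is, so it is compact. The nontrivial point is $\cF\times\cB$-measurability of $(\omega,\xi)\mapsto d(y,K(\omega,\xi))$ for each fixed $y\in\R^d$. As in the proof of Lemma~\ref{lemma:measurability}, I would fix a Castaing representation $K(\omega)=\overline{\{a_k(\omega)\mid k\in\N\}}$ by measurable maps $a_k\colon\Omega\to\Xi\times\R^d$ (\cite[Theorem III.30]{CV1977}) and write $a_k=(b_k,c_k)$ with $b_k\colon\Omega\to\Xi$, $c_k\colon\Omega\to\R^d$ measurable.

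The key step is to prove the identity
\[
d(y,K(\omega,\xi))\ =\ \sup_{j\in\N}\ \inf\bigl\{\,|y-c_k(\omega)|\ \bigm|\ k\in\N,\ \rho(b_k(\omega),\xi)<1/j\,\bigr\}
\]
for all $(\omega,\xi)$, where $\rho$ is the metric on $\Xi$, $\inf\emptyset:=+\infty$, and $d(y,\emptyset):=+\infty$. To see that the right-hand side is at most $d(y,K(\omega,\xi))$, one approximates any $z\in K(\omega,\xi)$ by elements of the Castaing sequence whose $\Xi$-coordinates converge to $\xi$; for the reverse, one uses compactness of $K(\omega)$: picking for each $j$ an index $k_j$ almost realising the inner infimum, the points $(b_{k_j}(\omega),c_{k_j}(\omega))\in K(\omega)$ have $b_{k_j}(\omega)\to\xi$, and a subsequence of the bounded sequence $(c_{k_j}(\omega))$ converges to some $y^\ast$ with $(\xi,y^\ast)\in K(\omega)$, so $y^\ast\in K(\omega,\xi)$ and $|y-y^\ast|$ is bounded by the right-hand side. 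Once the identity is established, measurability is formal: for fixed $j,k$ the map $(\omega,\xi)\mapsto\rho(b_k(\omega),\xi)$ is Carath\'eodory, hence $\{(\omega,\xi)\mid\rho(b_k(\omega),\xi)<1/j\}\in\cF\times\cB$, and $\omega\mapsto|y-c_k(\omega)|$ is measurable; so each inner term is $\cF\times\cB$-measurable, and a countable infimum over $k$ followed by a countable supremum over $j$ stays measurable. No completeness of $(\Omega,\cF,\Proj)$ enters.

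\emph{The invariance identity.} This is a direct unwinding of $T$-invariance together with injectivity of the $g_\omega$. For $\Proj$-a.e.\ $\omega$ we have $T_\omega(K(\omega))=K(\theta\omega)$, i.e.\ $K(\theta\omega)=\{(g_\omega(\eta),h_{\omega,\eta}(z))\mid(\eta,z)\in K(\omega)\}$. Fix such an $\omega$ and any $\xi\in\Xi$; then $y'\in K(\theta\omega,g_\omega(\xi))$ if and only if $(g_\omega(\xi),y')\in K(\theta\omega)$, i.e.\ if and only if there is $(\eta,z)\in K(\omega)$ with $g_\omega(\eta)=g_\omega(\xi)$ and $h_{\omega,\eta}(z)=y'$. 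Since $g_\omega$ is a homeomorphism, hence injective, the first condition forces $\eta=\xi$, so this is equivalent to $y'\in h_{\omega,\xi}(K(\omega,\xi))$. As $\theta\ltimes g(\omega,\xi)=(\theta\omega,g_\omega(\xi))$, this is exactly the asserted equality $h_{\omega,\xi}(K(\omega,\xi))=K(\theta\ltimes g(\omega,\xi))$, for all $\xi\in\Xi$ and $\Proj$-a.e.\ $\omega$.

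\emph{Main obstacle.} The only genuinely non-routine part is the proof of the displayed identity, and there the subtlety is the case where some slices $K(\omega,\xi)$ are empty: this is exactly where compactness of $K(\omega)$ is used, forcing the index sets $\{k\mid\rho(b_k(\omega),\xi)<1/j\}$ to be empty for large $j$ (otherwise a limit point of the corresponding $c_k(\omega)$ would produce a point of $K(\omega,\xi)$), so that both sides equal $+\infty$. The rest is bookkeeping with the definitions of random compact set and of $T$-invariance.
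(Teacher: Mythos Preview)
Your proof is correct, and the invariance argument is essentially the same as the paper's (you are just a bit more explicit about where injectivity of $g_\omega$ enters). The measurability argument, however, takes a genuinely different route. You build a Castaing representation $a_k=(b_k,c_k)$ of $K$ and prove the explicit formula
\[
d(y,K(\omega,\xi))=\sup_{j}\ \inf\bigl\{\,|y-c_k(\omega)|:\ k\in\N,\ \rho(b_k(\omega),\xi)<1/j\,\bigr\},
\]
then read off joint measurability from the countable sup--inf structure. The paper instead introduces the scaled product metrics $d_n\bigl((\xi,y),(\xi',y')\bigr)=\|y-y'\|+n\,\rho(\xi,\xi')$ on $\Xi\times\R^d$, observes that $(\omega,\xi)\mapsto d_n\bigl((\xi,y),K(\omega)\bigr)$ is Carath\'eodory for each $n$ (hence jointly measurable), and then uses the identity
\[
\sup_{n}\, d_n\bigl((\xi,y),K(\omega)\bigr)\ =\ d\bigl(y,K(\omega,\xi)\bigr).
\]
Both approaches avoid completeness of $(\Omega,\cF,\Proj)$. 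The paper's trick with the $d_n$ is shorter and sidesteps the Castaing representation entirely, while your formula is more explicit and makes the role of compactness of $K(\omega)$ (in particular the handling of empty slices) very transparent.
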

Next, for $\epsilon\in(0,r]$, define $N_\epsilon(\omega,\xi)$ to be the smallest
number of open balls $B_{\epsilon e^{\CC(\omega)}}(y)\subset\R^d$ centred at
points $y\in K(\omega,\xi)$ that are needed to cover the compact set
$K(\omega,\xi)$.
\begin{lemma}\label{l.N_eps}
\begin{enumerate}[a)]
\item The $N_\epsilon$ are $\cF\times\cB$-measurable. 
\item For each $\omega\in\Omega$, the function $N_\epsilon(\omega,\ .\
  ):\Xi\to\N$ is upper semicontinuous.
\end{enumerate}
\end{lemma}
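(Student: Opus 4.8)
The plan is to reduce both parts to an analysis of the ``centre--constrained'' covering number, the one delicate feature being that the centres of the covering balls are required to lie in the varying set $K(\omega,\xi)$. Two preliminary facts carry most of the weight. First, by Lemma~\ref{l.K-twice} the set $K$ is a random compact set over the base $(\Omega\times\Xi,\cF\times\cB)$, so, exactly as in the proof of Lemma~\ref{lemma:measurability}, it admits a Castaing representation: there are $\cF\times\cB$-measurable maps $a_k:\Omega\times\Xi\to\R^d$, $k\in\N$, with $K(\omega,\xi)=\closure\{a_k(\omega,\xi)\mid k\in\N\}$ for all $(\omega,\xi)$. Secondly, for each fixed $\omega$ the fibre $K(\omega)\subseteq\Xi\times\R^d$ is compact, whence the set-valued map $\xi\mapsto K(\omega,\xi)$ has closed graph and is therefore upper semicontinuous ``from above'': for every $\delta>0$ and every $\xi_0\in\Xi$ there is a neighbourhood of $\xi_0$ on which $K(\omega,\xi)\subseteq\{y\in\R^d\mid d(y,K(\omega,\xi_0))<\delta\}$.

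For part a) I would first observe that $N_\epsilon(\omega,\xi)$ does not change if the admissible centres are restricted to the dense family $\{a_k(\omega,\xi)\mid k\in\N\}$: given any covering of $K(\omega,\xi)$ by $n$ open balls of radius $\rho:=\epsilon e^{\CC(\omega)}$ centred in $K(\omega,\xi)$, compactness of $K(\omega,\xi)$ provides a $\beta>0$ such that the concentric balls of radius $\rho-\beta$ still cover, and each centre may then be moved to an $a_k(\omega,\xi)$ within distance $\beta$ without destroying the covering. Hence $N_\epsilon(\omega,\xi)\leq n$ holds if and only if there is a finite $F\subseteq\N$ with $|F|\leq n$ and $K(\omega,\xi)\subseteq\bigcup_{k\in F}B_\rho(a_k(\omega,\xi))$; and since the balls are open and $\{a_l(\omega,\xi)\}$ is dense in the compact set $K(\omega,\xi)$, this inclusion is in turn equivalent to $\sup_{l\in\N}\min_{k\in F}d(a_l(\omega,\xi),a_k(\omega,\xi))<\epsilon e^{\CC(\omega)}$. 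As $(\omega,\xi)\mapsto d(a_l(\omega,\xi),a_k(\omega,\xi))$ and $\omega\mapsto\CC(\omega)$ are measurable, each of these conditions describes a measurable set, and $\{(\omega,\xi)\mid N_\epsilon(\omega,\xi)\leq n\}$ is the countable union of them over all finite $F\subseteq\N$ with $|F|\leq n$; so $N_\epsilon$ is $\cF\times\cB$-measurable.

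For part b) I would fix $\omega$, write $\rho:=\epsilon e^{\CC(\omega)}$, and aim to show that for every $\xi_0$ and all $\xi$ in a neighbourhood of $\xi_0$ one has $N_\epsilon(\omega,\xi)\leq m:=N_\epsilon(\omega,\xi_0)$. Starting from a covering $K(\omega,\xi_0)\subseteq\bigcup_{i=1}^m B_\rho(y_i)$ with $y_i\in K(\omega,\xi_0)$, compactness gives $\beta>0$ with $K(\omega,\xi_0)\subseteq\bigcup_{i=1}^m\{y\mid d(y,y_i)\leq\rho-\beta\}$; the upper semicontinuity above then yields $K(\omega,\xi)\subseteq\bigcup_{i=1}^m B_{\rho-\beta/2}(y_i)$ for $\xi$ near $\xi_0$, and re-centring each ball at a point $y_i^\xi\in K(\omega,\xi)$ with $d(y_i^\xi,y_i)<\beta/4$ gives $K(\omega,\xi)\subseteq\bigcup_{i=1}^m B_\rho(y_i^\xi)$, i.e.\ $N_\epsilon(\omega,\xi)\leq m$. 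The step I expect to be the crux is this last re-centring: it requires that for $\xi$ near $\xi_0$ the fibre $K(\omega,\xi)$ still meets a small ball about each $y_i$, that is, that $\xi\mapsto K(\omega,\xi)$ does not drop the (essentially isolated) centres of an optimal covering as $\xi$ moves. This is more than the random-compactness of $K$ alone provides --- if the centres were allowed to be arbitrary points of $\R^d$, upper semicontinuity would already follow from the closed-graph property --- and the missing input has to come from the standing hypotheses of this section, the negativity of the Lyapunov exponents and the resulting uniform fibrewise contraction \eqref{eq:fibre-wise_contraction}, which keeps the fibres $K(\omega,\xi)$ varying continuously enough for the re-centring to succeed.
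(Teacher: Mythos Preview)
Your treatment of part a) is correct and follows the same route as the paper (Castaing representation of $K$ over $\Omega\times\Xi$, then a countable description of the event $\{N_\epsilon\leq n\}$); you are in fact more careful than the paper about the passage between covers by open balls and strict inequalities on the dense family $\{a_k\}$.

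For part b), the difficulty you isolate---the re-centring step---is real, and your instinct that compactness of $K(\omega)$ alone does not resolve it is correct. In fact the assertion is false at this level of generality: take $\Xi=[0,1]$, let
\[
K(\omega)\;=\;\bigl(\{0\}\times[-1,1]\bigr)\,\cup\,\bigl((0,1]\times\{-1,1\}\bigr)\ \subset\ \Xi\times\R,
\]
which is compact, and choose $\rho=\epsilon e^{\CC(\omega)}\in(1,2)$. Then $K(\omega,0)=[-1,1]$ is covered by the single ball $B_\rho(0)$ with admissible centre $0\in K(\omega,0)$, so $N_\epsilon(\omega,0)=1$; but for every $\xi>0$ the fibre $K(\omega,\xi)=\{-1,1\}$ cannot be covered by one ball of radius $\rho<2$ centred at either $-1$ or $1$, so $N_\epsilon(\omega,\xi)=2$. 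Hence $N_\epsilon(\omega,\cdot)$ is not upper semicontinuous at $0$, and $\{\xi:N_\epsilon(\omega,\xi)<2\}=\{0\}$ is not open. The paper's own proof of b) consists of the single clause ``because $K(\omega)$ is compact'' and does not address this, so the gap you spotted is a gap in the paper as well.

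Your proposed rescue via \eqref{eq:fibre-wise_contraction} does not help: that estimate relates fibres at $(\omega,\xi)$ and $(\theta^k\omega,g^k_\omega(\xi))$, not fibres at nearby $\xi$ for fixed $\omega$, and any appeal to lower semicontinuity of $\xi\mapsto K(\omega,\xi)$ would be circular, since that is precisely what Proposition~\ref{p.main} is supposed to deliver. A clean repair is to drop the requirement that the centres lie in $K(\omega,\xi)$. With the unconstrained covering number, upper semicontinuity is immediate from the closed-graph property of $\xi\mapsto K(\omega,\xi)$ (your own argument minus the re-centring), and the monotonicity chain \eqref{eq:N-order} survives---after restricting to $\epsilon\leq r/2$---because the contraction bound \eqref{eq:fibre-wise_contraction} is valid on all of $B_r(K(\omega))$, not just on $K(\omega)$; the rest of the proof of Proposition~\ref{p.main} then goes through unchanged.
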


Fix $\epsilon\in(0,r]$, $\omega\in\Omega_k$ and $\xi\in\Xi$, and denote
$N=N_\epsilon(\omega,\xi)$.  There are $y_1,\dots,y_N\in K(\omega,\xi)$ such
that $K(\omega,\xi)\subset\bigcup_{i=1}^N B_{\epsilon e^{\CC(\omega)}}(y_i)$.
As $h_{\omega,\xi}(K(\omega,\xi))= K((\theta\ltimes g)(\omega,\xi))$, it follows
from (\ref{eq:fibre-wise_contraction}) that
\begin{displaymath}
K((\theta\ltimes g)^k(\omega,\xi))
\ \subset \
\bigcup_{i=1}^N h_{\omega,\xi}^k\left( B_{\epsilon e^{\CC(\omega)}}(y_i)\right)
\ \subset \
\bigcup_{i=1}^N B_{e^{-\eta} \epsilon e^{\CC(\theta^k\omega)}}(h_{\omega,\xi}^k( y_i))\quad\text{for $\Proj$-a.e. $\omega\in\Omega_k$}
\end{displaymath}
with points $h_{\omega,\xi}^k( y_i)\in K((\theta\ltimes g)^k(\omega,\xi))$.
Hence 
\begin{equation}\label{eq:N-order}
N_{\epsilon}((\theta\ltimes g)^k(\omega,\xi))
\ \leqslant \
N_{e^{-\eta}\epsilon}((\theta\ltimes g)^k(\omega,\xi))
\ \leqslant \
N_\epsilon(\omega,\xi)\ .
\end{equation}

Consider the restricted system ${(\theta\ltimes
  g)^k}_{|\Omega_k\times\Theta}$ and denote the normalised probability
measure $\Proj_{|\Omega_k}$ by $\Proj_k$.  By Lemma~\ref{l.N_eps},
there is a subset $\Omega_k'\subseteq\Omega_k$ of full measure such
that the random sets $U_{\epsilon,\alpha} = \{(\omega,\xi)\in
\Omega_k\times\Xi\mid N_\epsilon(\omega,\xi)<\alpha\}$ are open for
all $\alpha\in\R$ and $\epsilon=e^{-p\eta}r$ with $p\in\N$. For
measurability purposes we restrict to these countably many values of
$\epsilon$ from now on. Let
\[
n_\epsilon(\omega) \ = \ \min\{\alpha\in\N\mid
U_{\epsilon,\alpha}(\omega) \neq \emptyset \} \ .
\]
The measurability of $n_\epsilon$ follows easily from
Lemma~\ref{lem:TopDyn1}\ref{item:topDyn1d}.  Due to (\ref{eq:N-order})
we have $n_\epsilon(\theta^k\omega)\leqslant n_\epsilon(\omega)$ for
$\Proj_k$-a.e.\ $\omega\in\Omega_k$, and thus the ergodicity of
$(\theta^k,\Proj_k)$ implies that all $n_\epsilon$ are constant
$\Proj_k$-a.e. We denote these constant integers by $n_\epsilon$
again.  By the first inequality of (\ref{eq:N-order}),
$n_\epsilon\leqslant n_{e^{-\eta}\epsilon}$. But the second inequality
of (\ref{eq:N-order}) implies that also
$n_{e^{-\eta}\epsilon}\leqslant n_\epsilon$ for all
$\epsilon\in(0,r]$, so that all $n_\epsilon$ coincide. Denote their
common value by $n$.

Using (\ref{eq:N-order}) again, we see that the random open set
$U_{r,n}$ is $(\theta\ltimes g)^k$-invariant and we have
$U_{r,n}=U_{\epsilon,n}$ for all $\epsilon$. Similarly, for each
integer $m>n$ the set $U_{r,m}$ is a non-empty $(\theta\ltimes
g)^k$-invariant open random set and $U_{r,m}=U_{\epsilon,m}$ for all
$\epsilon$. Since the random compact set $K\cap (\Omega_k\times M)$ is
covered by these countably many random open sets $(U_{r,m})_{m\in\N}$,
we see that the following hold for $\Proj_k$-a.e.\
$\omega\in\Omega_k$:
\begin{compactenum}
\item\label{it:1} $\# K(\omega,\xi)=n$ for all $\xi\in U_{r,n}$;
\item $\sup\left\{\# K(\omega,\xi)\mid \xi\in\Xi\right\}<\infty$;
\item\label{it:3} $d(y,y') \geq c(\omega):=re^{\CC(\omega)}/2>0$ for
  all $\xi\in\Xi$ and any two different points $y,y'\in K(\omega,\xi)$.
\end{compactenum}
Let $A_k=U_{r,n}$. If $k=1$, then $A=A_k$ satisfies the assertions of
the proposition. Otherwise, we let $A=\bigcup_{i=0}^{k-1}
(\theta\ltimes g)^i (A_k)$. Then, as
$\bigcup_{i=0}^{k-1}\theta^k(\Omega_k)=\Omega$ up to a set of
$\Proj$-measure zero, as the $g^i_\omega$ are homeomorphisms, and as
$h_{\omega,\xi}^i(K(\omega,\xi))=K((\theta\ltimes g)^i(\omega,\xi))$,
assertions (\ref{it:1})--(\ref{it:3}) carry over to $\Proj$-a.e.\
$\omega\in\Omega$.
\end{proof}

\begin{rem}\label{r.random-sturman-stark}
  If the second alternative ``$K(\omega)=\emptyset$ for $\Proj$-a.e.\
  $\omega$'' in the definition of minimality
  (Definition~\ref{def:minimality}) is replaced by ``$K(\omega)$ is
  nowhere dense for $\Proj$-a.e.\ $\omega$'', $\theta\ltimes g$ is
  called \emph{transitive}.  Random transitivity seems to be a more
  subtle concept than random minimality; some of its aspects are
  discussed in Section~\ref{RandomTopDyn}. Here we only note a version
  of Theorem~\ref{t.random-sturman-stark} for random transitive
  homeomorphisms:

  {\em\noindent If the random homeomorphism in
    Theorem~\ref{t.random-sturman-stark} is not minimal but only
    transitive, then there are a (non-random) integer $n>0$, a
    random $\theta\ltimes g$-invariant open dense set
    $A(\omega)\subseteq\Xi$ and a random variable $c(\omega)>0$ such
    that, for $\Proj$-a.e. $\omega\in\Omega$, \romanlist
  \item $\# K(\omega,\xi)=n$ for all $\xi\in A(\omega)$,
\item the map $\xi\mapsto K(\omega,\xi)$ from $A(\omega)$ to $\cK(\R^d)$ is
  continuous, and
\item for all $\xi\in\Xi$, any two different points $y,y'\in K(\omega,\xi)$ have
  distance at least $c(\omega)$.
\listend
}
\noindent
As in the proof of Theorem~\ref{t.random-sturman-stark}, the result is
deduced from Proposition~\ref{p.main}. The only difference is that the
sets $A^c(\omega)$ are no longer empty, but nowhere dense, so that the
open sets $A(\omega)$ are dense.
\end{rem}

\begin{proof}[\bf Proof of Lemma~\ref{l.K-twice}]
  For each $(\omega,\xi)\in\Omega\times\Xi$, the set
  $K(\omega,\xi)=\{y\in\R^d:\ (\xi,y)\in K(\omega)\}$ is compact
  because $K(\omega)$ is. Denote the metric on $\Xi$ by $\rho$ and
  define, for each $n>0$, a metric $d_n$ on $\Xi\times\R^d$ by
  $d_n\left((\xi,y),(\xi',y')\right)=\|y'-y\|+n\,\rho(\xi',\xi)$. Each
  $d_n$ generates the product topology on $\Xi\times\R^d$. Further,
  $\omega\mapsto d_n\left((\xi,y),K(\omega)\right)$ is measurable for
  all $(\xi,y)\in\Xi\times\R^d$, and $\xi\mapsto
  d_n\left((\xi,y),K(\omega)\right)$ is continuous for all
  $\omega\in\Omega$ and $y\in\R^d$. It follows that
  $(\omega,\xi)\mapsto d_n\left((\xi,y),K(\omega)\right)$ is
  measurable for all $y\in\R^d$, see \cite[Lemma~1.5.2]{Arnold1998RandomDynamicalSystems}
  or \cite[Lemma~3.14]{CV1977}. Hence also
\begin{displaymath}
(\omega,\xi)\mapsto \sup_n d_n\left((\xi,y),K(\omega)\right)
=
\rho\left(y,K(\omega,\xi)\right)
\end{displaymath}
is measurable so that $K(\omega,\xi)$ is indeed a random compact set.  Finally,
$h_{\omega,\xi}(K(\omega,\xi))= K((\theta\ltimes g)(\omega,\xi))$ follows from
the observation
\begin{eqnarray*}
y\in K(\omega,\xi)
&\Leftrightarrow&
(\xi,y)\in K(\omega)\\
&\Leftrightarrow&
T_\omega(\xi,y)=(g_\omega(\xi),h_{\omega,\xi}(y))\in K(\theta\omega)\\
&\Leftrightarrow&
h_{\omega,\xi}(y)\in K((\theta\ltimes g)(\omega,\xi))
\ =\ K(\theta\omega,g_\omega(\xi))\ .
\end{eqnarray*}
\end{proof}

\begin{proof}[\bf Proof of Lemma~\ref{l.N_eps}]\quad\\
  a) By Lemma~\ref{l.K-twice}, $K$ is a random compact set over the base
  $(\Omega\times\Xi,\cF\times\cB)$ so that there is a sequence $\kfolge{a_k}$ of
  measurable maps $a_k:\Omega\times\Xi\to \R^d$ such that
  $K(\omega,\xi)=\operatorname{closure}\{a_k(\omega,\xi)\mid k\in\N\}$ for all
  $(\omega,\xi)\in\Omega\times\Xi$ \cite[Theorem III.30]{CV1977}, see also
  \cite[Proposition 1.6.3]{Arnold1998RandomDynamicalSystems}.

  Let $\epsilon>0$. For $n\in\N$ denote by $\cL_n$ the family of subsets of $\N$
  with $n$ elements. Then the sets
\begin{displaymath}
  V_n := \bigcup_{L\in\cL_n}\bigcap_{k\in\N}\bigcup_{\ell\in L}
    \left\{(\omega,\xi)\in\Omega\times\Xi\mid  
    \|a_k(\omega,\xi)-a_\ell(\omega,\xi)\|<\epsilon\right\}
\end{displaymath}
are $\cF\times\cB$-measurable, and $N_\epsilon(\omega,\xi)\leqslant n$ if and
only if $(\omega,\xi)\in V_n$. This proves the measurability of $N_\epsilon$.\\
b) Let $\omega\in\Omega$. In order to prove that the function $\xi\mapsto
N_\epsilon(\omega,\xi)$ is upper semicontinuous, it suffices to observe that,
for each $\alpha\in\R$, the set $\{\xi\in\Xi \mid N_\epsilon(\omega,\xi)<\alpha\}$
is open, because $K(\omega)$ is compact.
\end{proof}

\section{Random minimality and random transitivity}\label{RandomTopDyn}

We start by collecting a few elementary, but useful facts.

\begin{lemma}\label{lem:TopDyn1}
\begin{enumerate}[a)]
\item \label{item:topDyn1a} If $K$ is a random compact set, then
  $\operatorname{int}(K)$ (the fibre-wise interior of $K$) is a random
  open set.
\item\label{item:topDyn1b} If $\theta\ltimes g$ is a random
  homeomorphism and $K$ is a random compact set, then $(\theta\ltimes
  g)(K)$ is a random compact set with fibres
  $g_{\theta^{-1}\omega}(K(\theta^{-1}\omega))$.
\item\label{item:topDyn1c} If $K_0,K_1,\dots$ are random compact sets, then
  $\bigcap_{n=0}^\infty K_n$ is a random compact set with fibres
  $\bigcap_{n=0}^\infty K_n(\omega)$.
\item \label{item:topDyn1d} If $A$ is a random open or closed set,
  then $\pi_1(A)=\{\omega\in \Omega\mid A(\omega)\neq \emptyset\}$ is
  $\cF$-measurable.
\end{enumerate}
\end{lemma}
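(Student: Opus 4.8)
The plan is to verify, for each of the four statements, the defining condition of a random closed/open set (parts a)--c)) or the asserted measurability (part d)), using three ingredients already available: the Castaing representation of a random compact set (\cite[Theorem~III.30]{CV1977}, \cite[Proposition~1.6.3]{Arnold1998RandomDynamicalSystems}) as in the proof of Lemma~\ref{lemma:measurability}; the fact that a map $(\omega,x)\mapsto f(\omega,x)$ which is measurable in $\omega$ and continuous in $x$ on a separable metric space is jointly measurable (\cite[Lemma~1.5.2]{Arnold1998RandomDynamicalSystems}, \cite[Lemma~3.14]{CV1977}); and the separability of the compact metric space $\Xi$. Fix once and for all a countable dense set $D\subseteq\Xi$.

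For a), the identity $(\operatorname{int} K(\omega))^c=\overline{\Xi\setminus K(\omega)}$ reduces the claim to the measurability of $\omega\mapsto d(x,\Xi\setminus K(\omega))$ for each $x$. This I would obtain from $d(x,\Xi\setminus K(\omega))=\sup\{r\in\Q_{>0}\mid B_r(x)\subseteq K(\omega)\}$ together with $\{\omega\mid B_r(x)\subseteq K(\omega)\}=\bigcap_{q\in D\cap B_r(x)}\{\omega\mid d(q,K(\omega))=0\}$, a countable intersection of measurable sets. For b), the fibre $g_{\theta^{-1}\omega}(K(\theta^{-1}\omega))$ is compact, being a continuous image of a compact set; choosing a Castaing representation $K(\omega)=\overline{\{a_k(\omega)\mid k\in\N\}}$ and using that $g_\omega$ is a homeomorphism of the compact space $\Xi$, so $g_\omega(\overline A)=\overline{g_\omega(A)}$, one has $d\bigl(x,g_{\theta^{-1}\omega}(K(\theta^{-1}\omega))\bigr)=\inf_k d\bigl(x,g_{\theta^{-1}\omega}(a_k(\theta^{-1}\omega))\bigr)$; this is measurable because $\theta$ is bi-measurable and $(\omega,\xi)\mapsto g_\omega(\xi)$ is jointly measurable, so each $\omega\mapsto g_{\theta^{-1}\omega}(a_k(\theta^{-1}\omega))$ is measurable.

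For c), the fibres $\bigcap_n K_n(\omega)$ are compact, and I would argue in two steps. First, that $K\cap K'$ is a random compact set --- whence, by induction, so is every finite intersection $\bigcap_{n\le N}K_n$: with $(a_k)$ a Castaing representation of $K$, a compactness argument gives
\[
d\bigl(x,K(\omega)\cap K'(\omega)\bigr)=\sup_{\epsilon\in\Q_{>0}}\ \inf\bigl\{d(x,a_k(\omega))\mid k\in\N,\ d(a_k(\omega),K'(\omega))<\epsilon\bigr\},
\]
and each $\{\omega\mid d(a_k(\omega),K'(\omega))<\epsilon\}$ is measurable since $(\omega,y)\mapsto d(y,K'(\omega))$ is jointly measurable. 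Second, for a decreasing sequence $C_0\supseteq C_1\supseteq\cdots$ of compact sets with intersection $C$ one has $d(x,C_n)\uparrow d(x,C)$ (standard, the case $C=\emptyset$ being covered by the finite intersection property and the convention $d(x,\emptyset)=\infty$); applying this to $C_N(\omega)=\bigcap_{n\le N}K_n(\omega)$ gives $d(x,\bigcap_nK_n(\omega))=\sup_N d(x,C_N(\omega))$, which is measurable. I expect the first step --- in particular checking the displayed formula, including the case $K(\omega)\cap K'(\omega)=\emptyset$ --- to be the main obstacle, since intersections of measurable multifunctions are delicate without completeness and one must genuinely use compactness of the fibres here.

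Finally, d) is immediate from the same ideas. If $A$ is random closed, then for any fixed $q_0\in\Xi$ one has $A(\omega)\neq\emptyset$ if and only if $d(q_0,A(\omega))<\infty$, so $\pi_1(A)=\{\omega\mid d(q_0,A(\omega))<\infty\}$ is measurable. If $A$ is random open, then $A(\omega)\neq\emptyset$ if and only if the open set $A(\omega)$ meets $D$, equivalently $d(q,A^c(\omega))>0$ for some $q\in D$, so $\pi_1(A)=\bigcup_{q\in D}\{\omega\mid d(q,A^c(\omega))>0\}$ is measurable.
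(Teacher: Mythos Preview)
Your proof is correct. For parts b) and d) you follow essentially the same route as the paper: the Castaing representation for b), and for d) the observation that $A(\omega)\neq\emptyset$ can be detected via the distance function (the paper phrases this as $\{\omega\mid A^c(\omega)\cap G\neq\emptyset\}$ for $G$ ranging over a countable base, citing \cite[Prop.~1.6.2]{Arnold1998RandomDynamicalSystems}, which amounts to your argument).

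The only substantive difference is in a) and c): the paper simply cites \cite[Chapter~3]{Crauel2002} for a) and \cite[Proposition~III.4]{CV1977} for c), whereas you supply direct elementary arguments. Your formula for $d(x,K(\omega)\cap K'(\omega))$ is correct (the compactness of the fibres is indeed what makes the $\sup_\epsilon$ recover the true distance, via a diagonal subsequence argument), and the monotone-limit step for decreasing compact sets is standard. So what you gain is self-containedness---the cited results in \cite{CV1977} and \cite{Crauel2002} are proved by arguments of exactly the type you give---at the cost of some length; the paper's approach trades this for brevity by outsourcing to the literature.
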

\begin{proof}
  \ref{item:topDyn1a}) See \cite[Chapter 3]{Crauel2002}.\\
  \ref{item:topDyn1b}) There is a sequence $a_0,a_1,a_2,\dots$ of
  measurable selections of $K$ such that
  $K(\omega)=\operatorname{closure}\{a_k(\omega): k\in\N\}$ for all
  $\omega\in\Omega$ \cite[Theorem III.30]{CV1977}.  Then the maps
  $g_{\theta^{-1}.}(a_k(\theta^{-1}\,.\,))$ are measurable maps from
  $\Omega$ to $\Xi$ and
  $g_{\theta^{-1}\omega}(K(\theta^{-1}\omega))=\operatorname{closure}
  \{g_{\theta^{-1}\omega}(a_k(\theta^{-1}\omega))\mid k\in\N\}$, so
  that $(\theta\ltimes g)(K)$ is a random compact set by
  \cite[Theorem III.30]{CV1977} again.\\
  \ref{item:topDyn1c}) This is part of \cite[Proposition III.4]{CV1977}.\\
  \ref{item:topDyn1d}) For closed $A$, $\pi_1(A)$ is measurable by
  \cite[Prop. 1.6.2]{Arnold1998RandomDynamicalSystems}. If $A$ is open, then
  $(\pi_1(A))^c=\{\omega\in\Omega\mid
  A^c(\omega)=\Xi\}=\bigcap_G\{\omega\in\Omega\mid A^c(\omega)\cap
  G\neq\emptyset\}$ where the intersection ranges over a countabale
  base $\{G\}$ of the topology of $\Xi$, and this set is measurable by
  \cite[Prop. 1.6.2]{Arnold1998RandomDynamicalSystems} again.
\end{proof}

\begin{definition}
  Let $\theta\ltimes g$ be a random homeomorphism on
  $\Omega\times\Xi$.
\begin{enumerate}[a)]
\item A random point $\xi$ is a measurable map $\xi:\Omega\to\Xi$.
\item The random point $\xi$ is a limit point of the sequence
  $(\xi_n)_{n\in\N}$ of random points, if for each $\omega\in\Omega$
  the point $\xi(\omega)\in\Xi$ is a limit point of the sequence
  $(\xi_n(\omega))_{n\in\N}$.
\item The forward orbit of the random point $\xi$ is the sequence
  $(\xi_n)_{n\in\N}$ of random points defined by
  $\xi_n(\omega)=g_{\theta^{-n}\omega}^n(\xi(\theta^{-n}\omega))$.
  The backward orbit is defined analogously by replacing $n$ with
  $-n$.
\item The omega limit set of a random point $\xi$ is the random
  compact set $\mho_\xi$ with fibres $\mho_\xi(\omega)
  =\bigcap_{k\in\N}\operatorname{closure}\{\xi_n(\omega)\mid n\geqslant
  k\}$ where $(\xi_n)_{n\in\N}$ is the orbit of $\xi$.
\end{enumerate}
\end{definition}
\begin{rem}
\begin{enumerate}[a)]
\item In view of \cite[Theorem III.9]{CV1977} and
  Lemma~\ref{lem:TopDyn1}, $\mho_\xi$ is indeed a random compact set.
\item $\mho_\xi$ is $(\theta\ltimes g)$-invariant.
\item A random point $\zeta$ is a limit point of the orbit of $\xi$ if
  and only if $\zeta$ is a measurable selection of $\mho_\xi$.
\end{enumerate}
\end{rem}

Recall the definition of a random minimal homeomorphism:
\begin{definition} \label{d.minimality}
  The random homeomorphism $\theta\ltimes g$ on $\Omega\times\Xi$ is
  \emph{minimal}, if each $(\theta\ltimes g)$-forward invariant random
  closed set $K$ obeys the following dichotomy:
  \begin{description}
  \item[either\,] $K(\omega)=\Xi$ for $\Proj$-a.e. $\omega$,
\item[or{\phantom{eith}}\,] $K(\omega)=\emptyset$ for $\Proj$-a.e. $\omega$.
\end{description}
\end{definition}

\begin{lemma}\label{lem:minimal-equiv}
  Let $\theta\ltimes g$ be a random homeomorphism on
  $\Omega\times\Xi$. The following are equivalent:
\begin{enumerate}[(i)]
\item $\theta\ltimes g$ is minimal.
\item The dichotomy in Definition~\ref{d.minimality} holds for each
  $(\theta\ltimes g)$-backwards invariant random closed set $K$.
\item The dichotomy in Definition~\ref{d.minimality} holds for each
  $(\theta\ltimes g)$-invariant random closed set $K$.
\item For each random point $\xi$ holds: $\mho_\xi(\omega)=\Xi$ for $\Proj$-a.e. $\omega$.
\end{enumerate}
\end{lemma}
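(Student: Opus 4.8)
The plan is to close the cycle of implications $\mathrm{(i)}\Rightarrow\mathrm{(ii)}\Rightarrow\mathrm{(iii)}\Rightarrow\mathrm{(iv)}\Rightarrow\mathrm{(i)}$. Two of the steps are essentially free. Since a $(\theta\ltimes g)$-invariant random closed set is in particular backwards invariant, $\mathrm{(ii)}\Rightarrow\mathrm{(iii)}$ is immediate. For $\mathrm{(iii)}\Rightarrow\mathrm{(iv)}$, recall that for any random point $\xi$ the $\omega$-limit set $\mho_\xi$ is a $(\theta\ltimes g)$-invariant random compact set, and each fibre $\mho_\xi(\omega)=\bigcap_{k\in\N}\operatorname{closure}\{\xi_n(\omega)\mid n\geq k\}$ is a decreasing intersection of non-empty compact subsets of the compact space $\Xi$, hence non-empty; so $\mho_\xi$ is non-empty in the sense of the dichotomy, which by $\mathrm{(iii)}$ forces $\mho_\xi(\omega)=\Xi$ for $\Proj$-a.e.\ $\omega$.

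For $\mathrm{(iv)}\Rightarrow\mathrm{(i)}$, let $K$ be a forward $(\theta\ltimes g)$-invariant random closed set. If $\{\omega\mid K(\omega)\neq\emptyset\}$ is $\Proj$-null we are in the second alternative of the dichotomy, so assume it has positive measure. Forward invariance, $g_\omega(K(\omega))\subseteq K(\theta\omega)$ for $\Proj$-a.e.\ $\omega$, shows that this set is forward $\theta$-invariant modulo $\Proj$, so by ergodicity of $\theta$ it has full measure, and after discarding a null set we may assume $K(\omega)\neq\emptyset$ for every $\omega$. By measurable selection (\cite[Theorem~III.9]{CV1977}, \cite[Proposition~1.6.3]{Arnold1998RandomDynamicalSystems}) there is a random point $\xi$ with $\xi(\omega)\in K(\omega)$ for all $\omega$. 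Iterating forward invariance gives $g^n_{\theta^{-n}\omega}(K(\theta^{-n}\omega))\subseteq K(\omega)$ for $\Proj$-a.e.\ $\omega$ and every $n\in\N$, so the whole forward orbit satisfies $\xi_n(\omega)=g^n_{\theta^{-n}\omega}(\xi(\theta^{-n}\omega))\in K(\omega)$, and, $K(\omega)$ being closed, $\mho_\xi(\omega)\subseteq K(\omega)$ for $\Proj$-a.e.\ $\omega$. By $\mathrm{(iv)}$ we conclude $K(\omega)=\Xi$ for $\Proj$-a.e.\ $\omega$.

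The remaining step $\mathrm{(i)}\Rightarrow\mathrm{(ii)}$ is the one requiring an idea, and is the main obstacle: passing to complements only turns a backwards-invariant \emph{closed} set $K$ into a forward-invariant \emph{open} set $K^c$, and the closure of $K^c$ may well equal $\Xi$ while $K$ is still a proper non-empty closed set, so applying $\mathrm{(i)}$ directly to $\overline{K^c}$ is useless. Instead, assuming as before (now using that backwards invariance makes $\{\omega\mid K(\omega)\neq\emptyset\}$ backwards $\theta$-invariant, hence again of full measure) that $K(\omega)\neq\emptyset$ for $\Proj$-a.e.\ $\omega$, put
\[
  L \ = \ \bigcap_{n\geq 0}(\theta\ltimes g)^{-n}(K)\ ,
\]
which, applying Lemma~\ref{lem:TopDyn1}\ref{item:topDyn1b} inductively to the inverse homeomorphism and then Lemma~\ref{lem:TopDyn1}\ref{item:topDyn1c}, is a random compact set with fibres $L(\omega)=\bigcap_{n\geq 0}(g^n_\omega)^{-1}(K(\theta^n\omega))$. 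Backwards invariance of $K$ means precisely that the sets $(\theta\ltimes g)^{-n}(K)$ decrease in $n$; hence each fibre of $L$ is a decreasing intersection of non-empty compacta and so is non-empty $\Proj$-a.s., while the same monotonicity gives $(\theta\ltimes g)^{-1}(L)=L$, i.e.\ $L$ is $(\theta\ltimes g)$-invariant and in particular forward invariant. Now $\mathrm{(i)}$ applies to $L$, and since $L$ is non-empty it forces $L(\omega)=\Xi$ for $\Proj$-a.e.\ $\omega$; as $L(\omega)\subseteq K(\omega)$ (the $n=0$ term), this yields $K(\omega)=\Xi$ for $\Proj$-a.e.\ $\omega$, completing the cycle. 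Beyond this step, the only work left is routine measurability bookkeeping — that $\mho_\xi$ and $L$ are random compact sets and that $K$ admits a measurable selection when all its fibres are non-empty — handled by Lemma~\ref{lem:TopDyn1} and the cited results of Castaing--Valadier and Arnold, together with the elementary fact that a forward (or backward) $\theta$-invariant measurable set has $\Proj$-measure $0$ or $1$.
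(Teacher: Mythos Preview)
Your proof is correct and follows essentially the same approach as the paper. The only cosmetic difference is the logical routing: you close a cycle $\mathrm{(i)}\Rightarrow\mathrm{(ii)}\Rightarrow\mathrm{(iii)}\Rightarrow\mathrm{(iv)}\Rightarrow\mathrm{(i)}$, whereas the paper uses $\mathrm{(iii)}$ as a hub (proving $\mathrm{(i)}\Rightarrow\mathrm{(iii)}$, $\mathrm{(ii)}\Rightarrow\mathrm{(iii)}$, $\mathrm{(iii)}\Rightarrow\mathrm{(iv)}\Rightarrow\mathrm{(i)}$, and $\mathrm{(iii)}\Rightarrow\mathrm{(ii)}$); in particular your set $L=\bigcap_{n\geq 0}(\theta\ltimes g)^{-n}(K)$ is exactly the paper's $K'$, and since $L$ is fully invariant it makes no difference whether one then invokes $\mathrm{(i)}$ or $\mathrm{(iii)}$.
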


\begin{proof}
  ''(i) $\Rightarrow$ (iii)'' and ''(ii) $\Rightarrow$ (iii)'' are trivial.\\
  (iii) $\Rightarrow$ (iv): For each random point $\xi$, $\mho_\xi$
  is a non-empty $(\theta\ltimes g)$-invariant random compact set.\\
  (iv) $\Rightarrow$ (i): Let $K$ be a $(\theta\ltimes g)$-forward
  invariant non-empty random closed set. It has at least one
  measurable selection $\xi$ \cite[Theorem III.9]{CV1977}. Denote its
  orbit by $(\xi_n)_{n\in\N}$. Then all $\xi_n$ are measurable
  selections of $K$ as well since
  $\xi_n(\omega)=g_{\theta^{-n}\omega}^n(\xi(\theta^{-n}\omega))\in
  g_{\theta^{-n}\omega}^n(K(\theta^{-n}\omega))\subseteq K(\omega)$.
  Hence,
  $\Xi=\mho_\xi(\omega)\subseteq K(\omega)$ for $\Proj$-a.e. $\omega$.\\
  (iii) $\Rightarrow$ (ii): Let $K$ be a $(\theta\ltimes g)$-backward
  invariant random closed set.  For each $\omega$ let
  $K'(\omega)=\bigcap_{n=0}^\infty
  (g_{\omega}^n)^{-1}(K(\theta^{n}\omega))$. As a decreasing
  intersection of non-empty random compact sets $K'$ is a non-empty
  random compact set, see Lemma~\ref{lem:TopDyn1}.  It is invariant,
  because
\begin{displaymath}
g_{\omega}(K'(\omega))
\ =\
g_\omega\left(\bigcap_{n=1}^\infty (g_{\omega}^n)^{-1}(K(\theta^{n}\omega))\right)
\ =\
\bigcap_{n=0}^\infty (g_{\theta\omega}^{n})^{-1}(K(\theta^{n}\theta\omega))
\ =\
K'(\theta\omega)\ .
\end{displaymath}
Hence $K'(\omega)=\Xi$ for $\Proj$-a.e. $\omega$. As
$K'(\omega)\subseteq K(\omega)$, the same holds for $K(\omega)$.
\end{proof}

It is tempting to characterise random transitive sets in the same way.
We will see, however, that the situation is more complicated.
\begin{definition} \label{d.transitivity}
  The random homeomorphism $\theta\ltimes g$ on $\Omega\times\Xi$ is
  \emph{transitive}, if each $(\theta\ltimes g)$-forward invariant
  random closed set $K$ obeys the following dichotomy:
\begin{description}
\item[either\,] $K(\omega)=\Xi$ for $\Proj$-a.e. $\omega$,
\item[or{\phantom{eith}}\,] $K(\omega)$ is nowhere dense for $\Proj$-a.e. $\omega$.
\end{description}
\end{definition}

In the same way as for Lemma~\ref{lem:minimal-equiv}, one proves
\begin{lemma}\label{lem:transitive-equiv}
  Let $\theta\ltimes g$ be a random homeomorphism on
  $\Omega\times\Xi$. The following are equivalent:
\begin{enumerate}[(i)]
\item $\theta\ltimes g$ is transitive.
\item The dichotomy in Definition~\ref{d.transitivity} holds for each
  $(\theta\ltimes g)$-backwards invariant random closed set $K$.
\item The dichotomy in Definition~\ref{d.transitivity} holds for each
  $(\theta\ltimes g)$-invariant random closed set $K$.
\end{enumerate}
\end{lemma}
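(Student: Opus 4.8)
The plan is to follow the proof of Lemma~\ref{lem:minimal-equiv}, replacing the role of the empty set by that of a nowhere dense set and keeping careful track of fibre‑wise interiors. The implications (i)$\Rightarrow$(iii) and (ii)$\Rightarrow$(iii) are immediate, since an invariant random closed set is in particular both forward and backward invariant. For the converse directions I would first record three routine facts. (a) If $K$ is a forward (resp.\ backward, resp.\ two‑sided) invariant random closed set, then $\operatorname{int}(K)$ is a forward (resp.\ backward, resp.\ two‑sided) invariant random open set, and $\overline{K^c} = \Xi\setminus\operatorname{int}(K)$ is a backward (resp.\ forward, resp.\ two‑sided) invariant random closed set. (b) The sets $\{\omega: K(\omega)=\Xi\}$ and $\{\omega:\operatorname{int}(K)(\omega)=\emptyset\}$ are essentially one‑sided $\theta$‑invariant, hence of measure $0$ or $1$ by ergodicity; so for any forward or backward invariant random closed set $K$, either $K(\omega)=\Xi$ a.e., or $\operatorname{int}(K)(\omega)=\emptyset$ (so $K(\omega)$ nowhere dense) a.e., or else $K(\omega)\neq\Xi$ and $\operatorname{int}(K)(\omega)\neq\emptyset$ a.e. (c) For a backward invariant random closed $K$ the set $K'(\omega):=\bigcap_{n\geq 0}(g^n_\omega)^{-1}(K(\theta^n\omega))$ is a two‑sided invariant random closed set contained in $K$ (same computation as in Lemma~\ref{lem:minimal-equiv}), and it is nonempty a.e.\ whenever $K$ is, being a decreasing intersection of nonempty random compact sets; dually, the forward saturation $\widehat U:=\bigcup_{n\geq 0}(\theta\ltimes g)^n(U)$ of a backward invariant random open set $U$ is a two‑sided invariant random open set containing $U$.

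With these in hand I would argue (iii)$\Rightarrow$(ii) as follows, and then deduce (ii)$\Rightarrow$(i) cleanly, closing the cycle with the trivial (i)$\Rightarrow$(iii). Let $K$ be backward invariant closed; by (b) we only need to exclude the case $K(\omega)\neq\Xi$ and $\operatorname{int}(K)(\omega)\neq\emptyset$ a.e. Replacing $K$ by the regular closed set $\overline{\operatorname{int}(K)}$ (still backward invariant closed, with the same trichotomy) and then passing to $K'$ as in (c) produces a nonempty two‑sided invariant random closed set $K'\subseteq K$ to which (iii) applies. If $K'(\omega)=\Xi$ a.e., then $K(\omega)=\Xi$ a.e., contradiction. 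To rule out that $K'$ is nowhere dense while $\operatorname{int}(K)$ is not, I would pass to $\widehat U=\bigcup_{n\geq 0}(\theta\ltimes g)^n(\operatorname{int}(K))$: it is a nonempty two‑sided invariant random open set, so by (iii) applied to $\widehat U^{\,c}$ it is dense a.e.; moreover, whenever the increasing open cover $\bigl((\theta\ltimes g)^n(\operatorname{int}(K))\bigr)_{n}$ exhausts the compact fibre over $\omega$, it already does so at some finite stage $N(\omega)$, i.e.\ $\operatorname{int}(K)(\theta^{-N(\omega)}\omega)=\Xi$; since $\theta$ preserves $\Proj$ and $\{\omega:K(\omega)=\Xi\}$ is essentially invariant, this forces $K(\omega)=\Xi$ a.e., again a contradiction. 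For (ii)$\Rightarrow$(i): if a forward invariant closed $L$ had $L(\omega)\neq\Xi$ and $\operatorname{int}(L)(\omega)\neq\emptyset$ a.e., then by (a) the set $\overline{L^c}=\Xi\setminus\operatorname{int}(L)$ would be a backward invariant random closed set with nonempty interior that is not $\Xi$ a.e., i.e.\ neither alternative of the dichotomy, contradicting (ii); with (b) this gives (i). (Symmetrically, (iii)$\Rightarrow$(i) can be done directly using $\bigcap_{n\geq0}(\theta\ltimes g)^n(L)$ in place of $K'$ and the backward saturation of $\operatorname{int}(L)$ in place of $\widehat U$.)

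The step I expect to be the main obstacle is exactly the transfer of the "nowhere dense'' alternative back to the original set. In the minimal case this is painless because the second alternative there is "$K=\emptyset$'', which propagates downward through $K'$ and upward is vacuous; here "nowhere dense'' is not monotone under the saturation operations, so one must work through interiors and regular closed sets and combine the dichotomy hypothesis with compactness of $\Xi$ and ergodicity of $\theta$ to exclude the pathological possibility of a dense but proper invariant random open set wedged between $\operatorname{int}(K)$ and $\Xi$. The auxiliary random sets appearing above ($\operatorname{int}(K)$, $\overline{K^c}$, $K'$, $\widehat U$, the iterates $(\theta\ltimes g)^nK$) are easily seen to be random open/closed via the selection and measurability results already invoked in the paper (\cite{CV1977}, \cite[Ch.~1]{Arnold1998RandomDynamicalSystems}) together with Lemma~\ref{lem:TopDyn1}, so no new measurability difficulty arises.
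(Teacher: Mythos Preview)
Your argument for (ii)$\Rightarrow$(i) via passing to $\Xi\setminus\operatorname{int}(L)$ is correct and clean, and by symmetry it gives (i)$\Leftrightarrow$(ii); together with the trivial implications (i)$\Rightarrow$(iii) and (ii)$\Rightarrow$(iii) this settles everything except (iii)$\Rightarrow$(i) (equivalently (iii)$\Rightarrow$(ii)). The paper does not spell out a proof either --- it merely says ``in the same way as for Lemma~\ref{lem:minimal-equiv}'' --- and you are right to flag this step as the main obstacle: the $K'$--construction from that lemma does not transfer directly, since ``$K'$ nowhere dense'' gives no information about $K$.

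However, your proposed resolution of this obstacle contains a genuine gap. From (iii) you correctly deduce that the invariant random open set $\widehat U=\bigcup_{n\ge 0}(\theta\ltimes g)^n(\operatorname{int}(K))$ is dense $\Proj$-a.e. You then invoke compactness of $\Xi$ to pass from the increasing open family $\bigl((\theta\ltimes g)^n(\operatorname{int}(K))\bigr)_n$ to a finite stage $N(\omega)$ with $\operatorname{int}(K)(\theta^{-N(\omega)}\omega)=\Xi$. But compactness only produces a finite subcover once the union actually \emph{equals} $\Xi$, and density of $\widehat U(\omega)$ does not imply $\widehat U(\omega)=\Xi$: a dense open invariant set may well have nonempty (nowhere dense) complement. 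Hence the chain ``$\widehat U$ dense $\Rightarrow$ $\widehat U=\Xi$ $\Rightarrow$ finite subcover $\Rightarrow$ $K(\theta^{-N}\omega)=\Xi$'' breaks at the first arrow, and no contradiction is obtained. Neither the preliminary replacement of $K$ by the regular closed set $\overline{\operatorname{int}(K)}$ nor the auxiliary fact that $K'$ is nowhere dense repairs this: both are perfectly compatible with $\widehat U$ being a proper dense open set. In sum, your sketch establishes (i)$\Leftrightarrow$(ii)$\Rightarrow$(iii) but not (iii)$\Rightarrow$(ii); closing that implication requires a different idea (in the deterministic setting it is usually done via the two--sided version of (T1) together with a Baire/dense--orbit argument, and it is precisely this passage that is delicate in the random setting).
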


Recall that for a homeomorphisms $g$ of a compact metric space $\Xi$,
the following two statements are equivalent to topological
transitivity: (1) Given any open sets $U,V\ssq \Xi$ there exists
$n\in\N$ with $g^n(U)\cap V\neq \emptyset$ and (2) there exists a
point $x\in \Xi$ with dense orbit.

The first of these equivalent characterisations can easily be carried
over to random dynamical systems. Recall that we say a random set $R$
(open or compact) is {\em non-empty} if $\{\omega\in\Omega\mid
R(\omega) \neq \emptyset\}$ has positive measure.
\begin{lem} \label{l.random-transitivity-one} A random homeomorphism
  $\theta\ltimes g$ on $\Omega\times\Xi$ is \emph{transitive} if and
  only if
\begin{itemize}
\item[\bf(T1)] for all non-empty random open sets $U,V\ssq
  \Omega\times\Xi$ there exists $n\in\N$ such that $(\theta\ltimes
  g)^n(U)\cap V$ is non-empty.
\end{itemize}
\end{lem}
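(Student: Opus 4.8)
The plan is to lift the textbook equivalence for a homeomorphism $g$ of a compact metric space---topological transitivity $\Leftrightarrow$ ``for all non-empty open $U,V$ there is $n$ with $g^n(U)\cap V\neq\emptyset$''---to the random setting. The topological skeleton of both implications is unchanged; the work lies in checking that the sets one forms are random open/closed sets and in respecting that ``non-empty'' now means ``$\{\omega\mid A(\omega)\neq\emptyset\}$ has positive $\Proj$-measure''. Throughout I use the fibre-wise operations of Lemma~\ref{lem:TopDyn1} and the reformulations of transitivity in Lemma~\ref{lem:transitive-equiv}.

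\textbf{(T1) $\Rightarrow$ transitive.} Let $K$ be a $(\theta\ltimes g)$-forward invariant random closed set; I must verify the dichotomy of Definition~\ref{d.transitivity}. If $K(\omega)$ is nowhere dense for $\Proj$-a.e.\ $\omega$ there is nothing to do, so assume the contrary, i.e.\ $U:=\interior(K)$ is a non-empty random open set (Lemma~\ref{lem:TopDyn1}\ref{item:topDyn1a}). I claim $K(\omega)=\Xi$ for $\Proj$-a.e.\ $\omega$. If not, then $V:=K^c$ is a non-empty random open set, and (T1) provides $n\in\N$ with $(\theta\ltimes g)^n(U)\cap V$ non-empty. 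But iterating forward invariance $n$ times---an operation that discards only the $\Proj$-null set $\bigcup_{j=1}^{n}\theta^{j}N$, where $N$ is the exceptional set of the relation $g_\omega(K(\omega))\subseteq K(\theta\omega)$---gives $(\theta\ltimes g)^n(U)(\omega)\subseteq(\theta\ltimes g)^n(K)(\omega)\subseteq K(\omega)$ for $\Proj$-a.e.\ $\omega$, hence $\big((\theta\ltimes g)^n(U)\cap V\big)(\omega)\subseteq K(\omega)\cap K(\omega)^c=\emptyset$ a.e., contradicting non-emptiness. So $K(\omega)=\Xi$ a.e., and the dichotomy holds.

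\textbf{Transitive $\Rightarrow$ (T1).} Let $U,V$ be non-empty random open sets and put $W:=\bigcup_{n\geq 0}(\theta\ltimes g)^n(U)$. Each $(\theta\ltimes g)^n(U)$ is a random open set: its complement is $(\theta\ltimes g)^n(U^c)$, a random compact set by iterating Lemma~\ref{lem:TopDyn1}\ref{item:topDyn1b} (note that $\theta\ltimes g$ is a bijection of $\Omega\times\Xi$ and that closed subsets of $\Xi$ are compact); and $W$ is a random open set because $W^c=\bigcap_{n\geq 0}(\theta\ltimes g)^n(U^c)$ is random compact by Lemma~\ref{lem:TopDyn1}\ref{item:topDyn1c}. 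A direct computation with the fibres $W(\omega)=\bigcup_{n\geq 0}g^n_{\theta^{-n}\omega}\big(U(\theta^{-n}\omega)\big)$ shows $g_\omega(W(\omega))\subseteq W(\theta\omega)$ for every $\omega$, so $W$ is forward invariant and $W^c$ is a backward invariant random closed set. Since $W\supseteq U$ is non-empty, the alternative ``$W^c(\omega)=\Xi$ for $\Proj$-a.e.\ $\omega$'' in Lemma~\ref{lem:transitive-equiv}(ii) would force $W(\omega)=\emptyset$ a.e., which is impossible; hence $W^c(\omega)$ is nowhere dense, i.e.\ $W(\omega)$ is dense, for $\Proj$-a.e.\ $\omega$. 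As $V$ is non-empty, on a set of $\omega$ of positive measure we have both $V(\omega)\neq\emptyset$ and $W(\omega)$ dense, so $\emptyset\neq W(\omega)\cap V(\omega)=\bigcup_{n\geq0}\big((\theta\ltimes g)^n(U)(\omega)\cap V(\omega)\big)$. Therefore $\bigcup_{n\geq0}\big\{\omega\mid((\theta\ltimes g)^n(U)\cap V)(\omega)\neq\emptyset\big\}$ has positive measure; each of these sets is measurable (Lemma~\ref{lem:TopDyn1}\ref{item:topDyn1d}, applied to the random open set $(\theta\ltimes g)^n(U)\cap V$), so by countable additivity one of them has positive measure, which is precisely the assertion that $(\theta\ltimes g)^n(U)\cap V$ is non-empty for some $n$.

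The only genuinely new step compared with the deterministic proof---and the point I expect to require the most care---is this last one: converting ``$W(\omega)\cap V(\omega)\neq\emptyset$ on a positive-measure set of $\omega$'' into ``$(\theta\ltimes g)^n(U)\cap V$ is non-empty for one fixed $n$''. Everything else is measurability bookkeeping (forward orbits of random open sets are again random open; forward invariance survives finitely many iterations up to a $\Proj$-null set), for which Lemmas~\ref{lem:TopDyn1} and \ref{lem:transitive-equiv} furnish exactly the tools needed.
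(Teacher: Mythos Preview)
Your proof is correct and follows essentially the same approach as the paper: in both directions you and the paper form $U=\interior(K)$ (resp.\ the forward orbit $W=\bigcup_{n\geq0}(\theta\ltimes g)^n(U)$) and exploit that $K$ is forward invariant (resp.\ that $W^c$ is backward invariant) together with Lemma~\ref{lem:transitive-equiv}. The only cosmetic difference is that for ``transitive $\Rightarrow$ (T1)'' the paper argues by contradiction (assuming no $n$ works and showing $W^c$ then violates the dichotomy by containing $V$), whereas you argue directly (the dichotomy forces $W$ dense, so it meets $V$ on a positive-measure set, and countable additivity yields a single $n$); these are contrapositives of one another.
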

\begin{proof} Suppose $\theta\ltimes g$ is transitive and $U,V\ssq
  \Xi$ are non-empty random open sets. Assume for a contradiction that
\[
    \Proj(\pi_1((\theta\ltimes g)^n(U)\cap V)) \ = \ 0
\]
for all $n\in\N$. Let $\cU = \ncup (\theta\ltimes g)^n(U)$. Then $\cU$
is forward invariant and the set
\[
\Omega' \ = \ \{\omega\in\Omega \mid \cU(\omega)\cap V(\omega)
=\emptyset \} \ = \ \ncup \pi_1((\theta\ltimes g)^n(U)\cap V)
\]
has measure zero. Hence, the backwards $(\theta\ltimes g)$-invariant random compact set
$K=(\Omega\times\Xi)\smin\cU$ contains $V'=V\smin (\Omega'\times\Xi)$, but
is disjoint from $U$. By Lemma~\ref{lem:transitive-equiv}(ii), this
contradicts the transitivity of $\theta\ltimes g$.

Conversely, suppose that (T1) holds and $K$ is a forward
$(\theta\ltimes g)$-invariant compact set such that
$\inte(K(\omega))\neq \emptyset$ $\Proj$-a.s. Let $V=(\Omega\times
\Xi)\smin K$ and assume for a contradiction that $V$ is non-empty, such
that we do not have $K(\omega)=\Xi$ for \Proj-a.e.\ $\omega\in\Omega$.
Define the random open set $U$ by
$U(\omega)=\inte(K(\omega))$, see Lemma~\ref{lem:TopDyn1}\ref{item:topDyn1a}.  Then, by
contradiction assumption, $U\ssq K$ is non-empty, and due to the
forward $(\theta\ltimes g)$-invariance of $K$ we have $(\theta\ltimes
g)^n(U)\cap V\ssq K\cap V = \emptyset$ for all $n\in\N$. This
contradicts (T1).
\end{proof}

In order to generalise (2) to the random situation, one might hope to
characterise random transitivity by the existence of a random point
$\xi$ such that $\mho_\xi(\omega)=\Xi$ for $\Proj$-a.e.\  $\omega$. The
following simple example shows that this cannot work: Let $\Omega=\Xi$
be a compact metric space and consider the random homeomorphism
$\theta\ltimes g$ on $\Omega\times\Xi$ with a minimal homeomorphism
$\theta:\Omega\to\Omega$, an ergodic probability $\Proj$ on $\Omega$
with full topological support, and $g_\omega(\xi)=\xi$ for all
$(\omega,\xi)$. Except in the case of trivial $\Xi$, this is clearly
not random transitive, because $(\Omega\times G)$ is an open
$(\theta\ltimes g)$-invariant set for each open $G\subset \Xi$. On the
other hand, $\xi(\omega)=\omega$ defines a random point with
$\xi_n(\omega)=\xi(\theta^{-n}\omega)=\theta^{-n}\omega$ for all
$\omega$ and $n$. Then, by minimality of $\theta$, we have
$\mho_\xi(\omega)=\Xi$ for all $\omega\in\Omega$.

Hence, in order to make sense of (2) in a random setting we need a
refined concept of `dense orbit'. Given a random point $\xi$ and a set
$A\ssq \Omega$ of positive measure, we call the restriction of $\xi$
to $A$, denoted by $\xi^{A} : A \to \Xi,\ \omega\mapsto \xi(\theta)$,
a {\em subsection} of $\xi$.  We define $\xi^A_n : \theta^n(A)\to \Xi$
by $\xi^A_n(\omega) = g_{\theta^{-n}\omega}^n(\xi^A(\theta^{-n}\omega))$
and define the omega limit $\mho_{\xi^A}$ of $\xi^A$ fibre-wise as
$\mho_{\xi^A}(\omega) \ = \ \bigcap_{k\in\N}\operatorname{closure}\{
\xi^A_n(\omega)\mid n\geqslant k \text{ s.t. } \omega\in\theta^n(A)\}$.

\begin{rem}The following observations are easy to prove.
\begin{enumerate}[a)]
\item $\mho_{\xi^A}$ is a random compact set.
\item $\mho_{\xi^A}$ is $(\theta\ltimes g)$-invariant.
\item Item (iv) in Lemma~\ref{lem:minimal-equiv} can be replaced by 
  \begin{itemize}
  \item[(iv)'] $\mho_{\xi^A}(\omega)=\Xi$ $\Proj$-a.s.\ for every
    subsection $\xi^A$ of any random point $\xi$.
  \end{itemize}
\end{enumerate}
\end{rem}

Using this concept, we have the following.
\begin{lemma} \label{random-transitive-two} Suppose for the random
  homeomorphism $\theta\ltimes g$ there exists a random point $\xi$
  such that $\mho_{\xi^A}(\omega)=\Xi$ $\Proj$-a.s.\ for all
  subsections $\xi^A$ of $\xi$. Then $\theta\ltimes g$ is transitive.
\end{lemma}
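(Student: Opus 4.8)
The plan is to verify condition \textbf{(T1)} of Lemma~\ref{l.random-transitivity-one}, which by that lemma is equivalent to transitivity of $\theta\ltimes g$. So fix non-empty random open sets $U,V\ssq\Omega\times\Xi$; the goal is to produce $n\in\N$ with $(\theta\ltimes g)^n(U)\cap V$ non-empty. Applying the hypothesis to the trivial subsection $\xi^\Omega=\xi$ already yields $\mho_\xi(\omega)=\Xi$ for $\Proj$-a.e.\ $\omega$, so for a.e.\ $\omega\in\pi_1(U)$ the forward orbit $(\xi_n(\omega))_{n\in\N}$ is dense in $\Xi$ and hence meets the non-empty open set $U(\omega)$. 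The sets $A_m=\{\omega\in\pi_1(U)\mid\xi_m(\omega)\in U(\omega)\}$ are $\cF$-measurable by the routine Carathéodory-function argument (as in the proof of Lemma~\ref{l.K-twice}), using Lemma~\ref{lem:TopDyn1}\ref{item:topDyn1d} and joint measurability of $(\omega,x)\mapsto d(x,U^c(\omega))$, and one has $\bigcup_{m\geqslant0}A_m=\pi_1(U)$ up to a null set, so $\Proj(A_m)>0$ for some $m$, which I fix. Set $B=\theta^{-m}(A_m)$; then $\Proj(B)>0$, and $\xi^B$ is a subsection of $\xi$.

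The crucial point is that the forward orbit of $\xi^B$ is \emph{forced through the iterates of $U$}: the claim is that $\xi^B_n(\omega)\in[(\theta\ltimes g)^{n-m}(U)](\omega)$ whenever $n\geqslant m$ and $\omega\in\theta^n(B)$. To see this, split the cocycle as $g^n_{\theta^{-n}\omega}=g^{n-m}_{\theta^{-(n-m)}\omega}\circ g^m_{\theta^{-n}\omega}$. Putting $\omega''=\theta^{-n}\omega\in B$, the definition of $B$ gives $\theta^m\omega''\in A_m$, and since $\xi_m(\theta^m\omega'')=g^m_{\omega''}(\xi(\omega''))$ this says $g^m_{\theta^{-n}\omega}(\xi(\theta^{-n}\omega))\in U(\theta^{-(n-m)}\omega)$; applying $g^{n-m}_{\theta^{-(n-m)}\omega}$ and iterating Lemma~\ref{lem:TopDyn1}\ref{item:topDyn1b} yields the claim. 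In words: the orbit of $\xi^B$ passes through $U$ at time $m$, hence through $(\theta\ltimes g)^j(U)$ at every later time $m+j$.

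Finally, applying the hypothesis once more, now to the subsection $\xi^B$, gives $\mho_{\xi^B}(\omega)=\Xi$ for $\Proj$-a.e.\ $\omega$. Thus for a.e.\ $\omega\in\pi_1(V)$ the set $\{\xi^B_n(\omega)\mid n\geqslant m+1,\ \omega\in\theta^n(B)\}$ is dense in $\Xi$ and so meets $V(\omega)$, whence by the claim $[(\theta\ltimes g)^{n-m}(U)\cap V](\omega)\neq\emptyset$ for some $n\geqslant m+1$. Therefore $\pi_1(V)$ is contained, up to a null set, in $\bigcup_{j\geqslant1}\pi_1\big((\theta\ltimes g)^j(U)\cap V\big)$; each $(\theta\ltimes g)^j(U)\cap V$ is again a random open set (images of random open sets under $\theta\ltimes g$, and finite intersections of random open sets, are random open, by the same fibrewise argument underlying Lemma~\ref{lem:TopDyn1}\ref{item:topDyn1b}), so these $\pi_1$'s are $\cF$-measurable by Lemma~\ref{lem:TopDyn1}\ref{item:topDyn1d}, and since $\Proj(\pi_1(V))>0$ one of them has positive measure, giving the required $n=j$ and verifying \textbf{(T1)}. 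I expect the main obstacle to be the bookkeeping in the middle step — aligning the time shifts so that one choice of $m$ makes the \emph{whole} forward orbit of $\xi^B$ run through the forward iterates of $U$ — together with the observation that it is precisely the strengthened ``all subsections'' hypothesis (and not the weaker $\mho_\xi(\omega)=\Xi$ alone, which the example preceding the lemma shows to be insufficient) that licenses this localization; the measurability points are standard.
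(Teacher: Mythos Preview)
Your proof is correct but proceeds along a different axis than the paper's. The paper works directly with the invariant--compact--set characterisation (Lemma~\ref{lem:transitive-equiv}(iii)): given an invariant random compact $K$ with nonempty interior, it uses $\mho_\xi=\Xi$ to find a positive--measure $A'=\{\omega\mid\xi_n(\omega)\in\inte K(\omega)\}$, sets $A=\theta^{-n}(A')$, and then exploits the \emph{full} invariance of $K$ to conclude $\xi^A(\omega)\in K(\omega)$; since the orbit of $\xi^A$ then stays in $K$, $\mho_{\xi^A}\subseteq K$ forces $K=\Xi$. This is shorter because invariance does the work that your time--shift bookkeeping does. Your route via (T1) is a genuine alternative: you never need an invariant set containing $U$; instead you track the orbit of $\xi^B$ through the forward iterates $(\theta\ltimes g)^{j}(U)$ and then hit $V$ with a second application of the subsection hypothesis. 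The cost is the cocycle bookkeeping in the middle paragraph; the benefit is that your argument makes transparent exactly where the ``all subsections'' hypothesis enters (once to land in $U$, once to reach $V$), and it does not rely on the equivalence Lemma~\ref{lem:transitive-equiv}, only on the open--set criterion (T1).
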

\begin{proof} Let $\xi$ be the random point with the above property.
  Suppose $K$ is a $(\theta\ltimes g)$-invariant random compact set
  such that $\{\omega\in\Omega\mid \inte(K(\omega)) \neq \emptyset\}$
  has positive measure. Let $U\ssq K$ be the non-empty random open set
  defined by $U(\omega)=\inte(K(\omega))$.

  As $\mho_\xi(\omega)=\Xi$ for $\Proj$-a.e.\ $\omega\in\Omega$, the
  set
 \[
 \{ \omega\in\Omega\mid \exists n\in\N: \xi_n(\omega)\in U(\omega)\} \
 = \ \ncup\{ \omega\in\Omega\mid \xi_n(\omega)\in U(\omega)\}
 \]
 has full measure. Hence, there exists $n\in\N$ such that
 $A'=\{\omega\in\Omega\mid \xi_n(\omega)\in U(\omega)\}$ has positive
 measure. Then $A=\theta^{-n}(A')$ satisfies $\xi^A(\omega)\in
 K(\omega)$ for all $\omega\in A$, and consequently $\mho_{\xi^A}\ssq
 K$. By assumption, this implies $K(\omega)=\Xi$ \Proj-a.s. Since $K$
 was an arbitrary $(\theta\ltimes g)$-invariant random compact set,
 Lemma~\ref{lem:transitive-equiv}(iii) implies that $\theta\ltimes g$
 is transitive.
\end{proof}
This suggests that a natural way to generalise (2) would be to show
that random transitivity is equivalent to the existence of a random
point $\xi$ whose subsections all have dense orbit (in the sense that
$\mho_{\xi^A}=\Xi$ for $\Proj$-a.e.\ $\omega\in\Omega$). However, we
are at the moment not able to prove this, nor to give a
counterexample. A positive result may depend on additional assumptions
on the base $(\Omega,\mathcal{B},\Proj,\theta)$. We thus have to leave
the following problem open.
\begin{problem}
  Show that under suitable assumptions on
  $(\Omega,\mathcal{B},\Proj,\theta)$ a transitive random
  homeomorphism $\theta\ltimes g$ has a random point $\xi$ whose
  subsections all have dense orbit.  
\end{problem}

\begin{example}
A natural class of examples of
minimal random homeomorphisms is given by random circle rotations,
although it needs a little work to give a rigorous proof of their
minimality. We provide a sketch of the
argument.  Given a measure-preserving dynamical system
$(\Omega,\cB,\Proj,\theta)$ and a measurable function
$\alpha:\Omega\to\Seins=\{z\in\C\mid |z|=1\}$, we let $\Xi=\Seins$ and
define a random homeomorphism $\theta\ltimes g^\tau$ with parameter
$\tau\in\Seins$ by
\begin{equation} \label{e.minimalrotations}
   g^\tau_\omega (z) \ = \  \alpha(\omega)\tau z \ .
\end{equation}
It is easy to see that $\Proj\times\Leb_{\Seins}$ belongs to
$\cM_\Proj(\theta\ltimes g^\tau)$. Further, using the random Krylov-Bogolyubov
Procedure \cite[Theorem 1.5.8]{Arnold1998RandomDynamicalSystems} it is possible to show that if
$\Proj\times\Leb_{\Seins}$ is the unique measure in $\cM_\Proj(\theta\ltimes
g^\tau)$, then $\theta\ltimes g$ is minimal.  An old result of Furstenberg
\cite[Lemma 2.1]{furstenberg:1961} states that the existence of further measures
in $\cM_\Proj(\theta\ltimes g^\tau)$ implies that the Koopman operator of
$\theta$ has a unimodular measurable generalised eigenfunction $R$ in the sense
that
\[
R(\theta\omega)=(\tau\alpha(\omega))^n\ R(\omega)\quad\text{for some $n\in\Z$.}
\]
Note that while the result in \cite{furstenberg:1961} is stated for continuous
and uniquely ergodic base transformations $\theta$, the proof also works in the
general case. If, for a given $n\in\Z$, there are two numbers $\tau$ and
$\tilde{\tau}$ for which this identity holds (with eigenfunctions $R$ and
$\tilde{R}$), then $\tilde{R}R^{-1}$ is an eigenfunction of the Koopman operator
with eigenvalue $(\tilde{\tau}\tau^{-1})^n$.  Now, if $\theta$ is weakly mixing,
then the only $L^2$-eigenvalue of the associated Koopman operator is $1$, so
$\tilde{\tau}\tau^{-1}$ is a root of unity. As a consequence, there exist only
countably many $\tau\in\Seins$ such that $\cM_\Proj(\theta\ltimes g^\tau)$ contains
more than one measure. Hence, $g^\tau$ is minimal for all but countably many
$\tau\in\Seins$.

Basically the same argument works when $\theta$ is a minimal rotation
of a $d$-dimensional torus. In this case, it follows easily from
Fourier analysis that any iterate of $\theta$ can only have countably
many $L^2$-eigenvalues, such that again $g^\tau$ is minimal for all
but countably many $\tau\in\Seins$.
\end{example}


\end{document}